\newtheorem{theorem}{Theorem}[section]
\newtheorem{definition}{Definition}[section]
\newtheorem{remark}{Remark}[section]
\newtheorem{proposition}{Proposition}[section]
\newtheorem{lemma}{Lemma}[section]
\def\t{\tilde}
\def\u{\underline}
\newcommand\EE {\mathbb E}
\newcommand\FF {\mathbb F}
\newcommand\HH {\mathbb H}
\newcommand\RR {\mathbb R}
\newcommand\PP {\mathbb P}
\newcommand\cB{\mathcal B}
\newcommand\cF {\mathcal F}
\newcommand\cG {\mathcal G}
\newcommand\cH{\mathcal H}
\newcommand\cO{\mathcal O}
\newcommand\cP{\mathcal P}
\begin{document}

\title{A Probabilistic Approach to Mean Field Games with Major and Minor Players}
\author{Ren\'e Carmona  \and Xiuneng Zhu}
\maketitle

\begin{abstract}
We propose a new approach to mean field games with major and minor players. Our formulation involves a two player game
where the optimization of the representative minor player is standard while the major player faces an optimization over conditional McKean-Vlasov stochastic differential equations. The definition of this limiting game is justified by proving that its solution provides approximate Nash equilibriums for large finite player games. This proof depends upon the generalization of standard  results on the propagation of chaos to conditional dynamics. Because it is on independent interest, we prove this generalization in full detail. Using a conditional form of the Pontryagin stochastic maximum principle (proven in the appendix), we reduce the solution of the mean field game to a forward-backward system of stochastic differential equations of the conditional McKean-Vlasov type, which we solve in the Linear Quadratic setting. We use this class of models to show that Nash equilibriums in our formulation can be different from those of the formulations contemplated so far in the literature.   
\end{abstract}

\section{Introduction}
\label{se:introduction}
Stochastic  games are widely used in economic, engineering and social science applications, and the notion of Nash equilibrium is one of the most prevalent notion of equilibrium used in their analyses. However, when the number of players is large, exact Nash equilibria  are notoriously difficult to identify and construct explicitly. In an attempt to circumvent this roadblock, Lasry and Lions in \cite{MFG1,MFG2,MFG3} initiated the theory of mean field games for a type of games in which all the players are \emph{statistically identical}, and only interact through their empirical distributions. These authors  successfully identify the limiting problem as a set of two coupled PDEs, the first one of Hamilton-Jacobi-Bellman type and the second one of Kolmogorov type. Approximate Nash equilibria for the finite-player games are then derived from the solutions of the limiting problem. Motivated by the analysis of large communication networks,  Huang, Malham\'e and Caines developed independently a very similar program, see \cite{HuangMalhameCaines}, under the name of Nash Certainty Equivalence. A probabilistic approach was developed by Carmona and Delarue, see \cite{CarmonaDelarue_sicon}, in which the limiting system of coupled PDEs is replaced by a fully coupled forward-backward stochastic differential equation (FBSDE for short). Recently, an approach based on the weak formulation of stochastic controls was introduced in \cite{CarmonaLacker1} and models with a common noise studied in \cite{CarmonaDelarueLacker}..

From a modeling perspective, one of the major shortcomings of the standard mean field game theory is the strong symmetry requirement that all the players in the game are statistically identical. See nevertheless \cite{HuangMalhameCaines} where the asymptotic theory is applied to several groups of players.
The second requirement of the mean field games theory is that, when the number of players is large, the influence of one single player on the system becomes asymptotically negligible. This is in sharp contrast with some real-world applications. For example, in the banking system there are a few \emph{too big to fail}  banks, and a large number of small banks whose actions and status impact the system no-matter how large the number of small banks.

In \cite{Huang}, Huang introduced a linear-quadratic infinite-horizon model in which there exists a major player, whose influence will not fade away when the number of players tends to infinity. \cite{NguyenHuang1} introduces the finite-horizon counterpart, and \cite{NourianCaines} generalizes this model to the nonlinear case. These models are usually called '\emph{mean field game with major and minor players'}. Unfortunately, the scheme proposed in \cite{NguyenHuang1,NourianCaines} fails to accommodate the case where the state of the major player enters the dynamics of the minor players. To be more specific, in \cite{NguyenHuang1,NourianCaines}, the major player influences the minor players solely via their cost functionals. \cite{NguyenHuang2}  proposes a new scheme to solve the general case for linear-quadratic-Guassian (LQG for short) games  in which the major player's state enters the dynamics of the minor players. The limiting control problem for the major player is solved by what the authors call ``anticipative variational calculation''. In \cite{BensoussanChauYam}, the authors take, like in \cite{NourianCaines},  a stochastic Hamilton-Jacobi-Bellman approach to a type of general mean field games with major and minor players, and the limiting problem is characterized by a set of stochastic PDEs.

In this paper, we analyze a type of general mean field games with major and minor players, and develop a systematic scheme to find approximate Nash equilibria for the finite-player games using a purely probabilistic approach.  The limiting problem is identified as a two-player stochastic differential game, in which the control problem faced by the major player is of conditional McKean-Vlasov type, while the optimization problem faced by the representative minor player is a standard control problem. A matching procedure then follows the solution of the two-player game, which gives a FBSDE of McKean-Vlasov type as a characterization of the solution of the limiting problem. The construction of approximate Nash equilibria for the finite-player games with the aid of the limiting problem is also elaborated, with the approximate Nash equilibrium property carefully proved both for the major player and minor players, which fully justifies the scheme we propose. We believe that the results in this paper lead to a much more comprehensive understanding of this type of problems.

While \cite{BensoussanChauYam} is clearly the closest contribution to ours, our paper differs from \cite{BensoussanChauYam} in the following ways: first, we use a probabilistic approach based on a new version of the Pontryagin stochastic maximum principle for conditional McKean-Vlasov dynamics in order to solve the embedded stochastic control problems, while in \cite{BensoussanChauYam} a HJB equation approach is taken. Second, the limiting problem is defined as a two-player game as opposed to the three problems articulated in \cite{BensoussanChauYam}. We believe that this gives a better insight into this kind of mean field games with a major player. Third, the finite-player game in \cite{BensoussanChauYam} is a $N$-player game including only the minor players, and the major player is considered exogenous, and doesn't provide an active participation in the game. The associated propagation of chaos is then just a randomized version of the usual propagation of chaos associated to the usual mean field games, and the limiting scheme is not completely justified. Here we define the finite-player game as an $(N+1)$-player game including the major player. The construction of approximate Nash equilibriums is proved for the minor players and most importantly, for the major player as well, fully justifying our limiting scheme for finding approximate Nash equilibria.  

The classical theory of propagation of chaos, in which particles are identical is well developed. See for example the elegant treatment in \cite{Sznitman} and a more recent account in \cite{JourdainMeleardWoyczynski}. However, when introducing a major particle in the system, even when the number of particles tends to infinity, the influence of this \emph{major} particle on the other particles does not average out in the limit. This  creates interesting novel features not present in the classical theory. They involve conditioning with respect to the information flow associated to the major particle.
Our propagation of chaos result for SDEs of McKean-Vlasov type with conditional distributions is given in the stand alone Section \ref{se:conditional_chaos}.
The results of this section play a crucial role in the construction of approximate Nash equilibriums for the limiting two-player game in Section \ref{se:approximate}. They are independent of the results on Mean Field Games.  For this reason, we include them at the end of the paper, not to disrupt the flow.
 
The advantages of using the probabilistic approach are threefold. First, the probabilistic framework is natural when dealing with open-loop controls. In the present situation, the persistence of the influence of the major player forces the controls to be random, at least partially, even when looking for strategies in closed loop form. Second, the limiting conditional McKean-Vlasov control problem faced by the major player can be treated most elegantly using an appropriate version of the Pontryagin stochastic maximum principle. Since such a form of the stochastic maximum principle is not available in the published literature, we provide it in an appendix at the end of the paper. Third, our approach can rely on existing results in the literature on the well-posedness of FBSDEs and their associated decoupling fields in order to address the solvability of the limiting problem.

The mean field game model with major and minor players investigated in this paper is as follows. The major player which is indexed by $0$, can choose a control process $u^{0,N}$ taking values in a convex set $U_0 \subset \mathbb{R}^{k_0}$, and every minor player indexed by $i\in\{1,\cdots,N\}$ can choose a control process $u^{i,N}$ taking values in a convex set $U \subset \mathbb{R}^k$. The state of the system at time $t$ is given by a vector $X^N_t=(X^{0,N}_t,X^{1,N}_t,\cdots,X^{N,N}_t)\in \RR^{d_0+Nd}$ whose controlled dynamics are given by
\begin{equation}\label{fo:dynamics}
\begin{cases}
dX^{0,N}_t=b_0(t,X^{0,N}_t,\mu^N_t,u^{0,N}_t)dt+\sigma_0(t,X^{0,N}_t,\mu^N_t, u^{0,N}_t) dW^0_t,\\
dX^{i,N}_t=b(t,X^{i,N}_t,\mu^N_t,X^{0,N}_t,u^{i,N}_t)dt+\sigma(t,X^{i,N}_t,\mu^N_t,X^{0,N}_t,u^{i,N}_t) dW^i_t,\quad 1 \leq i \leq N,
\end{cases}
\end{equation}
where $(W^i_t)_{i \geq 0}$ is a sequence of independent Wiener processes, and
\begin{equation}
\label{fo:muN}
\mu^N_t=\frac{1}{N}\sum^N_{i=1}\delta_{X^{i,N}_t}
\end{equation}
is the empirical distribution of the states of the minor players, $\delta_x$ standing for the point Dirac mass at $x$.
The Wiener process $W^0$ is assumed to be $m_0$ dimensional while all the other Wiener processes $W^i$ for $i\ge 1$ are assumed to be $m$-dimensional. $X^{0,N}_t$ (and hence $b_0$) is $d_0$-dimensional while all the other $X^{i,N}_t$ (and hence $b$) are $d$-dimensional. Finally, for consistency reasons, the matrices $\sigma_0$ and $\sigma$ are $d_0\times m_0$ and $d\times m$ dimensional respectively. 
The major player aims at minimizing the cost functional given by
\begin{equation}\label{fo:majorcost}
J^{0,N}(u^{0,N},u^N)=\mathbb{E}\left[\int^T_0 f_0(t,X^{0,N}_t,\mu^N_t,u^{0,N}_t)dt+g_0(X^{0,N}_T,\mu^N_T)\right],
\end{equation}
and the minor players aim at minimizing the cost functionals:
\begin{equation}\label{fo:minorcost}
J^{i,N}(u^{0,N},u^N)=\mathbb{E}\left[\int^T_0 f(t,X^{i,N}_t,\mu^N_t,X^{0,N}_t,u^{i,N}_t)dt+g(X^{i,N}_T,\mu^N_T,X^{0,N}_T)\right], \quad 1 \leq i \leq N.
\end{equation}
We use the notation $u^N$ for $(u^{1,N},\cdots,u^{N,N})$. We observe readily that an important difference between the current model and the usual mean field game model is the presence of the state of the major player in the state dynamics and the cost functionals of the minor players. Even when the number of minor players is large, the major player can still influence the behavior of the system in a non-negligible manner.

The rest of the paper is organized as follows. In the preliminary section \ref{se:prelim} we review briefly the usual mean field game scheme, and then proceed to the scheme for the mean field games with major and minor players proposed in this paper. Some heuristic arguments leading to the scheme are also provided, and the difference between the current scheme and the one used in \cite{NguyenHuang1,NourianCaines} are highlighted. In section \ref{se:mfg} we carry out the scheme described in section \ref{se:prelim} for a type of mean field games with major and minor players with scalar interactions, and we use the Pontryagin maximum principle to solve the embedded stochastic control problems. The FBSDE of conditional mean field type characterizing the Nash equilibria for the limiting two-player game is derived. In section \ref{se:approximate}, we prove that the solution of the limiting problem can actually be used to build approximate Nash equilibria for the finite-player games, justifying our scheme. In section \ref{se:lqg}, we apply the scheme to the case of Linear Quadratic Gaussian (LQG for whort) models, and find explicit approximate Nash equilibria for the finite-player games, and in section \ref{se:example} a concrete example is given to show that the current scheme leads to different results from the scheme proposed in \cite{NguyenHuang2} and \cite{NourianCaines}. In  the independent section \ref{se:conditional_chaos}, we prove a conditional version of propagation of chaos which plays a pivotal role in the construction of approximate Nash equilibria in section \ref{se:approximate}. Finally, in the appendix at the end of the paper, we prove a version of the sufficient part of the Pontryagin stochastic maximum principle for conditional McKean-Vlasov dynamics used in solving the stochastic control problem faced by the major player.

\section{Preliminaries}
\label{se:prelim}
\subsection{Brief Review of the Standard Mean Field Game Problem}
A standard introduction to the mean field game (MFG for short) theory starts with an $N$-player stochastic differential game, the dynamics of the states of the players being governed by stochastic differential equations (SDEs)
$$
dX^{i,N}_t=b(t,X^{i,N}_t, \mu^N_t,u_t^{i,N})dt+\sigma(t,X^{i,N}_t,\mu^N_t,u^{i,N}_t) dW_t^i, \quad i=1,2,...,N,
$$
each player aiming at the minimization of a cost functional
$$
J^{i,N}(u)=\mathbb{E}\left[\int_0^T f(t,X_t^{i,N},\mu^N_t,u_t^{i,N})dt+g(X_T^{i,N},\mu^N_T)\right],
$$
where $\mu^N_t$ stands for the empirical distribution of the $X^{N,i}_t$ for $i=1,\cdots,N$. The usual MFG scheme can be summarized in the following 3 steps:
\begin{enumerate}
\item Fix a deterministic flow $(\mu_t)_{0 \leq t \leq T}$ of probability measures.
\item Solve the standard stochastic control problem: minimize
$$J(u)=\mathbb{E}\left[\int_0^Tf(t,X_t, \mu_t,u_t)dt+g(X_T,\mu_T)\right],$$
when the controlled dynamics of the process $X_t$ are given by
$$dX_t=b(t,X_t, \mu_t, u_t)dt+\sigma(t,X_t,\mu_t,u_t) dW_t.$$
\item  Solve the fixed point problem $\Phi(\mu)=\mu$, where for each flow $\mu$ as in step (1), $\Phi(\mu)$ denotes the flow of marginal distributions of the optimally controlled state process found in step (2).
\end{enumerate}
If the above scheme can be carried out successfully, it is usually possible to prove that the optimal control found in step (2) can be used to provide approximate Nash equilibriums for the finite-player game. The interested reader is referred to \cite{MFG1,MFG2,MFG3,HuangMalhameCaines} for detailed discussions of the PDE approach of the above scheme and to \cite{CarmonaDelarue_sicon, CarmonaLacker1} for two different probabilistic approaches.

\subsection{Heuristic derivation of MFG approach}
In this subsection we provide a heuristic argument which leads to a scheme for mean field games with major and minor players. The finite-player games are described by equations (\ref{fo:dynamics})-(\ref{fo:minorcost}) above. Because all the minor players are identical and influenced by the major player in exactly the same way, it is reasonable to assume that they are exchangeable, even when the optimal strategies (in the sense of Nash equilibrium) are implemented. On the other hand, for any sequence of integrable exchangeable random variables $(X_i)_{i \geq 1}$, de Finetti's law of large numbers states that almost surely,
$$
\frac{1}{N}\sum^N_{i=1} \delta_{X_i} \Longrightarrow \mathcal{L}(X_1 \vert \mathcal{G}),
$$
for some $\sigma$-field $\mathcal{G}$ where $ \Longrightarrow$ denotes convergence in distribution. We may want to apply this result for each time $t$ to the individual states $X^{i,N}_t$ in which case, a natural candidate for the $\sigma$-field $\mathcal{G}$ could be the element $\mathcal{F}^0_t$ of the filtration  generated by the Wiener process $W^0$ driving the dynamics of the state of the major player. This suggests that in mean field games with major and minor players, we can proceed essentially in the same way as in the standard mean field game theory, except for the fact that instead of fixing a \emph{deterministic} measure flow in the first step, we fix an adapted \emph{stochastic} measure flow, and in the last step, match this stochastic measure flow to the flow of marginal conditional distribution of the state of the representative minor player given $\mathcal{F}^0_t$. This is in accordance with intuition since, as all the minor players are influenced by the major player, they should make their decisions conditioned on the information provided by the major player. Notice that this is also consistent with the procedure used in the presence of a so-called common noise as
investigated in \cite{CarmonaDelarueLacker}.

However, the above argument fails to apply to the major player. Indeed, no matter how many minor players are present in the game, the major player's control influences all the minor players, and in particular, the empirical distribution formed by the minor players. When we construct the limiting problem for the major player, it is thus more reasonable to allow the major player to control the stochastic measure flow, instead of fixing it \emph{a priori}. This asymmetry between major and minor players was also observed in \cite{BensoussanChauYam}. 

\subsection{Precise formulation of the MFG problem with major and minor players}
Using the above heuristic argument, we arrive at the following scheme for the major-minor mean field game problem. The limiting control problem for the major player is of conditional McKean-Vlasov type, where the measure flow is endogenous, and the limiting control problem for the representative minor player is a standard one, where the measure flow is exogenous and fixed at the beginning of the scheme. As a consequence, the limiting problem becomes a two-player stochastic differential game between the major player and a representative minor player, instead of two consecutive stochastic control problems for each of them. Specifically:

\begin{enumerate}
\item Fix a $\FF^0$-progressively measurable stochastic measure flow $(\mu_t)_{0 \leq t \leq T}$ where $\FF^0=(\cF^0_t)_{t\ge 0}$ denotes the filtration generated by the Wiener process $W^0$.
\item Consider the following two-player stochastic differential game where  the control $(u^0_t)_{0 \leq t \leq T}$ of the first player is assumed to be adapted to $\FF^0$, and the control $(u_t)_{0 \leq t \leq T}$ of the second player is assumed to be adapted to the filtration $\FF=(\cF_t)_{t\ge 0}$ generated by $W$, and where the controlled dynamics of the state of the system are given by

\begin{equation}
\label{fo:limitSDE}
\begin{cases}
dX^0_t=b_0(t,X^0_t,\mathcal{L}(X_t \vert \mathcal{F}^0_t),u^0_t)dt+\sigma_0 (t,X^0_t,\mathcal{L}(X_t\vert\mathcal{F}^0_t),u^0_t)dW^0_t,\\
dX_t=b(t,X_t,\mathcal{L}(X_t\vert \mathcal{F}^0_t),X^0_t,u_t)dt+\sigma(t,X_t,\mathcal{L}(X_t\vert \mathcal{F}^0_t),X^0_t,u_t) dW_t,\\
d\check{X}^0_t=b_0(t,\check{X}^0_t,\mu_t,u^0_t)dt+\sigma_0(t,\check{X}^0_t,\mu_t,u^0_t) dW^0_t,\\
d\check{X}_t=b(t,\check{X}_t,\mu_t,\check{X}^0_t,u_t)dt+\sigma(t,\check{X}_t,\mu_t,\check{X}^0_t,u_t) dW_t,
\end{cases}
\end{equation}
and the cost functionals for the two players are given by
$$\begin{aligned}
&J^0(u^0,u)=\mathbb{E}\left[\int^T_0 f_0(t,X^0_t,\mathcal{L}(X_t \vert \mathcal{F}^0_t),u^0_t)dt+g_0(X^0_T,\mathcal{L}(X_T\vert \mathcal{F}^0_T))\right],\\
&J(u^0,u)=\mathbb{E}\left[\int^T_0 f(t,\check{X}_t,\mu_t,\check{X}^0_t,u_t)dt+g(\check{X}_T,\mu_T,\check{X}^0_T)\right],
\end{aligned}$$
where $\mathcal{L}(X_t \vert \mathcal{F}^0_t)$ stands for the conditional distribution of $X_t$ given $\mathcal{F}^0_t$. We look for Nash equilibria for this game.
\item Satisfy the consistency condition
\begin{equation}\label{fo:consistency}
\mu_t=\mathcal{L}(X_t \vert \mathcal{F}^0_t), \quad \forall t \in [0,T],
\end{equation}
where $X_t$ is the second component of the state controlled by $u^0$ and $u$ giving the Nash equilibrium found in step (2). 
\end{enumerate}
Notice that the above consistency condition amounts to solving a fixed point problem in the space of stochastic measure flows.
Notice also that even when the $X^{i,N}_t$ are scalar, the system \eqref{fo:limitSDE} describes the dynamics of a $4$-dimensional state driven by two independent Wiener processes. The dynamics of the first two components are of the conditional McKean-Vlasov type  (because of the presence of the conditional distribution $\mathcal{L}(X_t\vert \mathcal{F}^0_t)$ of $X_t$ in the coefficients) while the dynamics of the last two components are given by standard stochastic differential equations with random coefficients. In this two player game, the cost functional $J^0$ of the major player is of the McKean-Vlasov type while the cost functional $J$ of the representative minor player is of the standard type. As explained earlier, this is the main feature of our formulation of the problem.
 
Later in the paper, we show that if we are able to find a fixed point in the third step, i.e. a stochastic measure flow $(\mu_t)_{0 \leq t \leq T}$ satisfying (\ref{fo:consistency}), we can use it to construct approximate Nash equilibria for the finite-player games when the number of players is sufficiently large.  The precise meaning of this statement will be made clear in section \ref{se:approximate}.

\section{Mean Field Games with Major and Minor Players: The General Case}
\label{se:mfg}
In this section we  analyze in detail the scheme explained in the previous section, and we derive a FBSDE characterizing the solution to the limiting problem. 
We assume that $\Omega$ is a standard space and $\cF$ is its Borel $\sigma$-field, so that regular conditional distributions exist for all sub-$\sigma$-fields. The definition of standard probability spaces we use here can be found in \cite{Cardaliaguet}.
The finite-player games are described by (\ref{fo:dynamics})-(\ref{fo:minorcost}) where $(W^i)_{i \geq 0}$ is a sequence of independent Wiener processes. We shall use the following assumptions.

\noindent (\textbf{A1}) There exists a constant $c>0$ such that for all $t \in [0,T]$, $x'_0,x_0 \in \mathbb{R}^{d_0}$, $x',x \in \mathbb{R}^d$, $\mu',\mu \in \mathcal{P}_2(\mathbb{R}^d)$, $u_0 \in U_0$ and $u \in U$ we have
\begin{equation}
\begin{aligned}
&\vert (b_0,\sigma_0)(t,x'_0,\mu',u'_0)-(b_0,\sigma_0)(t,x_0,\mu,u_0)\vert+\vert (b,\sigma)(t,x',\mu',x'_0,u')-(b,\sigma)(t,x,\mu,x_0,u)\vert \\
&\phantom{????????}\leq c\bigg(\vert x'_0-x_0\vert+\vert x'-x\vert+\vert u'_0-u_0\vert+\vert u'-u\vert+W_2(\mu',\mu)\bigg).
\end{aligned}
\end{equation}
\noindent (\textbf{A2}) For all $u_0 \in U_0$ and $u \in U$ we have
$$
\mathbb{E}\left[\int^T_0 \vert (b_0,\sigma_0)(t,0,\delta_0,u_0)\vert^2+\vert (b,\sigma)(t,0,\delta_0,0,u)\vert^2\right]< \infty.
$$
\noindent (\textbf{A3}) There exists a constant $c_L>0$ such that for all $x_0,x'_0 \in \mathbb{R}^{d_0}$, $u_0,u'_0 \in \mathbb{R}^{k_0}$ and $\mu,\mu' \in \mathcal{P}_2(\mathbb{R}^d)$, we have
$$
\begin{aligned}
&\vert (f_0,g_0)(t,x'_0,\mu',u'_0)-(f_0,g_0)(t,x_0,\mu,u_0)\vert\\
 &\phantom{????????} \leq c_L\bigg(1+\vert (x'_0,u'_0)\vert+\vert (x_0,u_0)\vert+M_2(\mu')+M_2(\mu)\bigg)\bigg(\vert (x'_0,u'_0)-(x_0,u_0)\vert+W_2(\mu',\mu)\bigg),
\end{aligned}
$$
and for all $x_0 \in \mathbb{R}^{d_0}$, $x,x'\in \mathbb{R}^{d}$, $u,u' \in \mathbb{R}^{k}$ and $\mu,\mu' \in \mathcal{P}_2(\mathbb{R}^d)$,
$$
\begin{aligned}
&\vert (f,g)(t,x',\mu',x_0,u')-(f,g)(t,x,\mu,x_0,u)\vert\\
&\phantom{????????}\leq c_L\bigg(1+\vert (x',u')\vert+\vert (x,u)\vert+M_2(\mu')+M_2(\mu)\bigg)\bigg(\vert (x',u')-(x,u)\vert+W_2(\mu,\mu')\bigg).
\end{aligned}
$$
where $\mathcal{P}_2(\mathbb{R}^d)$ denotes the set of probability measures of order $2$ (i.e. with a finite second moment), and $W_2(\mu,\mu')$ the $2$-Wasserstein distance between $\mu,\mu'\in\cP_2(\RR^d)$. Also, we used the notation $M_2(\mu)=\int |x|^2 \mu(dx)$ for the second moment of $\mu$.
\vskip 2pt\noindent 
(\textbf{A4}) The functions $b_0$, $b$, $f$ and $g$ are differentiable in $x_0$, $x$ and $\mu$. Differentiability with respect to measure arguments is discussed in the appendix at the end of the paper.

Assumptions (A1)-(A2) guarantee that for all admissible controls, the SDEs (\ref{fo:dynamics})-(\ref{fo:minorcost}) and (\ref{fo:limitSDE}) have unique solutions, and (A3) guarantees that the associated cost functionals are well-defined. Assumption (A4) will be used when we define adjoint processes for the limiting control problems. 

In the following, we use $\mathbb{S}^{2,d}(\FF;U)$ to denote all $\FF$-progressively measurable processes $X$ taking values in $U \subset \mathbb{R}^d$ such that
\begin{equation}
\label{fo:S2d}
\mathbb{E}\left[\sup_{0\leq t \leq T} | X_t|^2\right]< \infty,
\end{equation}
$\mathbb{H}^{2,d}(\FF;U)$ to denote all $U$-valued $\FF$-progressively measurable processes $X$ such that
\begin{equation}
\label{fo:H2d}
\mathbb{E}\left[\int^T_0 |X_t|^2\right]< \infty,
\end{equation}
and finally we use $\mathcal{M}^{2,d}(\FF)$ to denote the set of $\FF$-progressively measurable stochastic measure flows $\mu$ on $\mathbb{R}^d$ such that
\begin{equation}
\label{fo:M2d}
\mathbb{E}\left[\int^T_0 \int_{\mathbb{R}^d}| x|^2 d\mu_t\right]<\infty.
\end{equation}
We will omit the filtration $\FF$ and the domain $U$ when there is no risk of confusion.

\subsection{Control problem for the major player}
In this subsection we consider the limiting two-player game and search for the major player's best response $u^0$ to the control $u$ of the representative minor player. This amounts to solving the optimal control problem based on the controlled dynamics
\begin{equation}
\begin{cases}
dX^0_t=b_0(t,X^0_t,\mathcal{L}(X_t \vert \mathcal{F}^{0}_t),u^0_t)dt+\sigma_0(t,X^0_t,\mathcal{L}(X_t\vert\mathcal{F}^0_t),u^0_t) dW^0_t,\quad X^0_0=x^0_0,\\
dX_t=b(t,X_t,\mathcal{L}(X_t \vert \mathcal{F}^{0}_t),X^0_t,u_t)dt+\sigma(t,X_t,\mathcal{L}(X_t \vert \mathcal{F}^{0}_t),X^0_t,u_t) dW_t,\quad X_0=x_0,
\end{cases}
\end{equation}
and the cost functional
$$
J^0(u^0)=\mathbb{E}\left[\int^T_0 f_0(t,X^0_t,\mathcal{L}(X_t \vert \mathcal{F}^{0}_t),u^0_t)dt+g_0(X^0_T,\mathcal{L}(X_T\vert \mathcal{F}^0_T))\right],
$$
where it is assumed that the control $u$ is given, the set of admissible controls $u^0$ being the space $\mathbb{H}^{2,k_0}(\FF^0;U_0)$. In what follows, this stochastic control problem will be denoted by (P1). We check readily that conditions (A2.1) - (A2.3) in the appendix at the end of the paper are satisfied. The Hamiltonian is defined as
\begin{equation}
\label{fo:H0}
\begin{aligned}
&H_0(t,x_0,x,\mu,p_0,p,q_{00},q_{11},u_0,u)=\langle p_0, b_0(t,x_0,\mu,u_0)\rangle+\langle p, b(t,x,\mu,x_0,u)\rangle\\
                             &\phantom{???????????????}+\langle q_{00},\sigma_0(t,x_0,\mu,u_0)\rangle+\langle q_{11},\sigma(t,x,\mu,x_0,u)\rangle+ f_0(t,x_0,\mu,u_0).
\end{aligned}
\end{equation}
We then introduce the following assumption regarding minimization of this Hamiltonian.

\noindent (\textbf{M0})  For all fixed $(t,x_0,x,\mu,p_0,p,q_{00},q_{11},u)$ there exists a unique minimizer of the Hamiltonian $H_0$ as a function of $u_0$. Note that this minimizer should not depend upon $p$, $q_{11}$ and $u$. It will be denoted by $\hat{u}^0(t,x_0,\mu,p_0,q_{00})$.

\begin{remark}
This assumption is satisfied when the running cost $f_0$ is strictly convex in $u^0$, the drift $b_0$ is linear in $u^0$ and the volatility $\sigma_0$ is uncontrolled in the sense that it does not depend upon $u^0$. This will be the case in the examples considered later on. 
\end{remark}

For each admissible control $u^0$, the associated adjoint process $(P^0,P,Q^{00}, Q^{01}, Q^{10}, Q^{11})$ is defined as the solution of the backward stochastic differential equation (BSDE):
\begin{equation}\label{fo:BSDEmajor}
\begin{cases}
dP^0_t=-\partial_{x_0} H_0(t,\underline{X}_t,\mathcal{L}(X_t\vert\mathcal{F}^0_t),\underline{P}_t,\underline{Q}_t,u^0_t,u_t)dt+Q^{00}_t dW^0_t+Q^{01}_t dW_t,\\
\begin{aligned}
dP_t=&-\partial_x H_0(t,\underline{X}_t,\mathcal{L}(X_t\vert\mathcal{F}^0_t),\underline{P}_t,\underline{Q}_t,u^0_t,u_t)dt+Q^{10}_t dW^0_t+Q^{11}_t dW_t\\
&-\mathbb{E}^{\mathcal{F}^0_t}[\partial_\mu H_0(t,\tilde{\underline{X}}_t,\mathcal{L}(\tilde{X}_t\vert\mathcal{F}^0_t),\tilde{\underline{P}}_t,\tilde{\underline{Q}}_t,u^0_t,u_t)(X_t)]dt
\end{aligned},\\
P^0_T=\partial_{x_0} g(X^0_T,\mathcal{L}(X_T\vert \mathcal{F}^{0}_t)),\\
P_T=\mathbb{E}^{\mathcal{F}^0_T}[\partial_\mu g(\tilde{X}^0_T,\mathcal{L}(\tilde{X}_T\vert \mathcal{F}^0_T))(X_T)],
\end{cases}
\end{equation}
where to lighten the notations we write $\underline{X}=(X^0,X)$, $\underline{P}=(P^0,P)$ and $\underline{Q}=(Q^{00},Q^{01},Q^{10},Q^{11})$. We refer the reader to appendix at the end of the paper for 1) definitions of the tilde notation, which provides a natural extension of random variables to an extension of the original probability space, and of  $\mathbb{E}^{\mathcal{F}^0_t}[\cdot]$ which denotes expectation with respect to the regular conditional distribution on an extension of the original probability space, and 2) references to the definition and the properties of the differentiation with respect to the measure argument. Despite the presence of the conditional distributions in the coefficients, standard proofs of existence and uniqueness of solutions of BSDEs with Lipschitz coefficients still apply to  (\ref{fo:BSDEmajor}), for example when the derivatives of assumption (A4) are uniformy Lipshitz with linear growth. See for example \cite{CarmonaDelarue_ecp}.

In order to minimize the complexity of the notation, we systematically add a bar on the top of a random variable to denote its conditional expectation with respect to $\mathcal{F}^0_t$, for example $\bar{P}^0$ stands for $\EE[P^0|\cF^0_t]$. 

Once properly extended to cover the present situation, (see \cite{CarmonaDelarue_ap} for the necessary condition in the unconditional case, and the appendix for the sufficient condition) the necessary part of the Pontryagin stochastic maximum principle says that, if the control $u^0=(u^0_t)_t$ is optimal, then the Hamiltonian \eqref{fo:H0} is minimized along the trajectory of $(X^0_t,X_t,\underline{P}_t,\underline{Q}_t)$. So given assumption (M0) and the sufficient condition of the stochastic maximum principle proven in the  appendix at the end of the paper, $\hat{u}^0_t=\hat{u}^0(t,X^0_t,\mathcal{L}(X_t \vert \mathcal{F}^0_t),\bar{P}^0_t,\bar{Q}^{00}_t)$ will be an optimal control for the problem at hand if we can solve the forward backward stochastic differential equation (FBSDE):

\begin{equation}
\label{fo:FBSDEM}
\begin{cases}
dX^0_t=\partial_{p_0} H_0(t,\underline{X}_t,\mathcal{L}(X_t\vert\mathcal{F}^0_t),\underline{P}_t,\underline{Q}_t,\hat{u}^0_t,u_t)dt+\partial_{q_{00}}H_0(t,\underline{X}_t,\mathcal{L}(X_t\vert\mathcal{F}^0_t),\underline{P}_t,\underline{Q}_t,\hat{u}^0_t,u_t) dW^0_t,\\
dX_t=\partial_p H_0(t,\underline{X}_t,\mathcal{L}(X_t\vert\mathcal{F}^0_t),\underline{P}_t,\underline{Q}_t,\hat{u}^0_t,u_t)dt+\partial_{q_{11}}H_0(t,\underline{X}_t,\mathcal{L}(X_t\vert\mathcal{F}^0_t),\underline{P}_t,\underline{Q}_t,\hat{u}^0_t,u_t) dW_t,\\
dP^0_t=-\partial_{x_0} H_0(t,\underline{X}_t,\mathcal{L}(X_t\vert\mathcal{F}^0_t),\underline{P}_t,\underline{Q}_t,\hat{u}^0_t,u_t)dt+Q^{00}_t dW^0_t+Q^{01}_t dW_t,\\
\begin{aligned}
dP_t=&-\partial_x H_0(t,\underline{X}_t,\mathcal{L}(X_t\vert\mathcal{F}^0_t),\underline{P}_t,\underline{Q}_t,\hat{u}^0_t,u_t)dt+Q^{10}_t dW^0_t+Q^{11}_t dW_t\\
&-\mathbb{E}^{\mathcal{F}^0_t}[\partial_\mu H_0(t,\tilde{\underline{X}}_t,\mathcal{L}(\tilde{X}_t\vert\mathcal{F}^0_t),\tilde{\underline{P}}_t,\tilde{\underline{Q}}_t,\tilde{\hat{u}}^0_t,u_t)(X_t)]dt
\end{aligned}
\end{cases}
\end{equation}
with the initial and terminal conditions given by
$$
X^0_0=x^0_0,\quad X_0=x_0,\qquad P^0_T=\partial_{x_0} g(X^0_T,\mathcal{L}(X_T\vert \mathcal{F}^{0}_t)),\qquad P_T=\mathbb{E}^{\mathcal{F}^0_T}[\partial_\mu g(\tilde{X}^0_T,\mathcal{L}(\tilde{X}_T\vert \mathcal{F}^0_T))(X_T)].
$$
In general, FBSDEs are more difficult to solve than BSDEs. This is even more apparent in the case of equations of the McKean-Vlasov type. See nevertheless \cite{CarmonaDelarue_ecp} for an existence result in the unconditional case. In its full generality, the solvability of  FBSDE \eqref{fo:FBSDEM} of conditional McKean-Vlasov type is beyond the scope of this paper. We will solve it only in the linear quadratic case.

We show in the appendix that appropriate convexity assumptions are sufficient for optimality. We summarize them for later reference.

\noindent (\textbf{C0}) The function $\mathbb{R}^{d_0} \times \mathcal{P}_2(\mathbb{R}^d) \ni (x,\mu) \hookrightarrow g(x,\mu)$ is convex.  The function
$$\mathbb{R}^{d_0} \times \mathbb{R}^d \times \mathcal{P}_2(\mathbb{R}^d) \times  U_0 \ni (x_0,x,\mu,u_0)\hookrightarrow H(t,x_0,x,\mu,p_0,p,q_{00},q_{11},u_0,u)$$
is convex for all fixed $(t,p_0,p,q_{00},q_{11},u)$.

We then have the following proposition.

\begin{proposition}
Let us assume that (A1)-(A3), (M0) and (C0) are in force. If 
$$(X^0,X,P^0,P,Q^{00}, Q^{01}, Q^{10}, Q^{11}) \in \mathbb{S}^{2,d_0+d} \times \mathbb{S}^{2,d_0+d} \times \mathbb{H}^{2, (d_0+d)\times(d_0+d)}
$$ 
is a solution to the FBSDE (\ref{fo:FBSDEM}), then $u^0_t=\hat{u}^0(t,X^0_t,\mathcal{L}(X_T\vert \mathcal{F}^0_t),\bar{P}^0_t,\bar{Q}^{00}_t),$ is an optimal control for problem (P1)  and $(X^0,X)$ is the associated optimally controlled state process.
\end{proposition}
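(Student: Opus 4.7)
The plan is to deduce this proposition directly from the sufficient version of the Pontryagin stochastic maximum principle for conditional McKean-Vlasov dynamics proved in the appendix. Under (A1)--(A3) the forward equations are well-posed and $J^0$ is finite along every admissible control; (C0) and (M0) provide exactly the convexity and pointwise-minimization hypotheses required by the appendix theorem; and the integrability assumed on the solution of (\ref{fo:FBSDEM}) places the adjoint processes in the function spaces for which that theorem is formulated.

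I would then fix an arbitrary competitor $v^0 \in \mathbb{H}^{2,k_0}(\FF^0;U_0)$ with associated controlled state $(X^{0,v},X^v)$ (the minor player's $u$ being held fixed throughout), and aim to show $J^0(v^0)\ge J^0(u^0)$. The first step is to dominate the terminal-cost difference $g_0(X^{0,v}_T,\cL(X^v_T|\cF^0_T)) - g_0(X^0_T,\cL(X_T|\cF^0_T))$ from below using the joint convexity of $g_0$ in $(x_0,\mu)$; by the $L$-derivative representation of the first-order expansion this produces the linear functional $\langle P^0_T,X^{0,v}_T-X^0_T\rangle + \mathbb{E}^{\cF^0_T}[\langle \tilde P_T,\tilde X^v_T-\tilde X_T\rangle]$. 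Next I would apply It\^o's formula on the extended probability space to $\langle P^0_t, X^{0,v}_t-X^0_t\rangle + \mathbb{E}^{\cF^0_t}[\langle \tilde P_t,\tilde X^v_t - \tilde X_t\rangle]$ and substitute the coefficients from (\ref{fo:FBSDEM}). After regrouping, the resulting integrand equals $H_0(\cdot,v^0_t,u_t) - H_0(\cdot,u^0_t,u_t)$ minus its first-order Taylor polynomial in $(x_0,x,\mu,u_0)$ at the optimal trajectory; convexity of $H_0$ provided by (C0) makes this remainder nonnegative.

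To conclude I would handle the $u_0$-contribution separately. Since $b_0$, $\sigma_0$ and $f_0$ depend only on the $\cF^0_t$-measurable arguments $X^0_t$ and $\cL(X_t|\cF^0_t)$, taking $\mathbb{E}[\cdot|\cF^0_t]$ inside the inner products $\langle p_0,b_0\rangle$ and $\langle q_{00},\sigma_0\rangle$ leaves those terms unchanged up to replacing $(P^0_t,Q^{00}_t)$ by $(\bar P^0_t,\bar Q^{00}_t)$. Hence the $\FF^0$-adaptedness constraint reduces pointwise minimization of $H_0$ in $u_0$ to that of the same function with the conditionally averaged adjoints, and (M0) then identifies the minimizer as precisely $\hat u^0(t,X^0_t,\cL(X_t|\cF^0_t),\bar P^0_t,\bar Q^{00}_t)$. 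Combining this with the convexity bound above yields $J^0(v^0)\ge J^0(u^0)$.

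The main obstacle is not the ingredients familiar from the classical Pontryagin argument -- It\^o's formula, convex duality and (M0) -- but the rigorous treatment of the $L$-derivative terms $\mathbb{E}^{\cF^0_t}[\partial_\mu H_0(\tilde{\underline{X}}_t,\cdots)(X_t)]$ appearing in the $P$-dynamics and their interplay with conditioning on $\cF^0_t$. One must justify Fubini-type interchanges between $\mathbb{E}^{\cF^0_t}$ and the $dt$-integral and ensure that the tilde copies $(\tilde{\underline X},\tilde{\underline P},\tilde{\underline Q})$ are coupled with $(\underline X,\underline P,\underline Q)$ consistently with (\ref{fo:FBSDEM}) on the extension. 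These technicalities are precisely what the appendix's sufficient Pontryagin principle handles once and for all, which is why the bulk of the work is deferred there.
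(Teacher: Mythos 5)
Your proposal is correct and follows essentially the same route as the paper: the proposition is a direct application of the sufficient Pontryagin maximum principle for conditional McKean-Vlasov dynamics stated in the appendix, with (C0) supplying the convexity hypotheses, the $\mathbb{S}^{2}/\mathbb{H}^{2}$ regularity of the solution of (\ref{fo:FBSDEM}) supplying the integrability condition, and the $\FF^0$-adaptedness of admissible controls turning the conditional minimization requirement (\ref{fo:BDG}) (with $\mathcal{H}_t=\mathcal{F}^0_t$) into the pointwise minimization of $H_0$ with the conditionally averaged adjoints $\bar P^0_t,\bar Q^{00}_t$, which (M0) identifies as $\hat u^0$. Your sketch of the underlying duality argument (convexity of $g_0$, It\^o's formula on the extended space, convexity of $H_0$) is precisely the content the paper defers to the appendix and, ultimately, to the \emph{mutatis mutandis} adaptation of the unconditional case.
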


\subsection{Control problem for the representative minor player}
For the representative minor player's best response control problem, for each fixed stochastic measure flow $\mu$ in $\mathcal{M}^{2,d}(\FF^0)$  and for each admissible control $u^0=(u^0_t)_t$ of the major player, we solve the optimal control problem of the controlled dynamics
\begin{equation}
\label{fo:minorsde}
\begin{cases}
d\check{X}^0_t=b_0(t,\check{X}^0_t,\mu_t,u^0_t)dt+\sigma_0(t,\check{X}^0_t,\mu_t,u^0_t) dW^0_t,\quad \check{X}^0_0=x^0_0,\\
d\check{X}_t=b(t,\check{X}_t,\mu_t,\check{X}^0_t,u_t)dt+\sigma(t,\check{X}_t,\mu_t,\check{X}^0_t,u_t) dW_t,\quad \check{X}_0=x_0
\end{cases}
\end{equation}
for the cost functional
\begin{equation}
J(u)=\mathbb{E}\bigg[\int^T_0 f(t,\check{X}_t,\mu_t,\check{X}^0_t,u_t)+g(\check{X}_T,\mu_T,\check{X}^0_T)\bigg].
\end{equation}
Note that since $u^0$ and $\mu$ are fixed, the first SDE in \eqref{fo:minorsde} can be solved \emph{off line}, and its solution appears in the second SDE of \eqref{fo:minorsde} and the cost functional only as an exogenous source of randomness. 
If we choose the set of admissible controls for the representative minor player to be $\mathbb{H}^{2,k}(\FF^{W_0,W};U)$ where $\FF^{W_0,W}$ is the filtration generated by both Wiener processes $W^0$ and $W$, this problem is a standard non-Markovian stochastic control problem. We shall denote it by (P2) in the following. For this reason, we introduce only adjoint variables for $\check{X}_t$, and use the reduced Hamiltonian:
\begin{equation}
\label{fo:minorH}
H(t,x_0,x,\mu,y,z_{11},u^0,u)=\langle y, b(t,x,\mu,x_0,u)\rangle\\
                                     +\langle z_{11},\sigma(t,x,\mu,x_0,u)\rangle+f(t,x,\mu,x_0,u).
\end{equation}
As before, in order to find a function satisfying the Isaacs condition, we introduce the following assumption regarding its minimization.

\noindent (\textbf{M}) For all fixed $(t,x_0,x,\mu,y,z_{11},u_0)$, there exists a unique minimizer of the above reduced Hamiltonian $H$ as a function of $u$. This minimizer will be denoted by $\hat{u}(t,x_0,x,\mu,y,z_{11})$.

For all admissible control $u$ we can define the adjoint process $(\underline{Y},\underline{Z})=(Y^0,Y,Z^{00},Z^{01},Z^{10},Z^{11})$ associated to $u$ as the solution of the following BSDE:
\begin{equation}\label{fo:BSDEminor}
\begin{cases}
dY^0_t=-\partial_{x_0}H(t,\underline{\check{X}}_t,\mu_t,\underline{Y}_t,\underline{Z}_t,u^0_t,u_t)dt+Z^{00}_t dW^0_t+Z^{01}_t dW_t,\\
dY_t=-\partial_x H(t,\underline{\check{X}}_t,\mu_t,\underline{Y}_t,\underline{Z}_t,u^0_t,u_t)dt+Z^{10}_t dW^0_t+Z^{11}_t dW_t,\\
Y^0_T=\partial_{x_0}g(\check{X}_T,\mu_T,\check{X}^0_T),\qquad
Y_T=\partial_x g(\check{X}_T,\mu_T,\check{X}^0_T).
\end{cases}
\end{equation}
The existence of the adjoint processes associated to a given admissible control $u$ is a consequence of the standard existence result of solutions of BSDEs
when the partial derivatives of $b$, $\sigma$ and $f$ with respect to $x_0$ and $x$ are uniformly bounded in $(t,x_0,x,\mu)$.
The necessary part of the Pontryagin stochastic maximum principle says that, if the admissible control $u=(u_t)_t$ is optimal, then the Hamiltonian \eqref{fo:minorH} is minimized along the trajectory of $(X^0_t,X^t,\underline{Y}_t,\underline{Z}_t)$. So given assumption (M) and the sufficient condition of the stochastic maximum principle (see for example the appendix in section \ref{se:Pontryagin}), $\hat{u}_t=\hat{u}(t,X^0_t,X_t, \mathcal{L}(X_t \vert \mathcal{F}^0_t),\u{Y}_t,\u{Z}_t)$ will be an optimal control for the problem at hand if we can solve the forward backward stochastic differential equation (FBSDE):

The standard Pontryagin maximum principle tells us that the optimal control should be given by $\hat{u}_t=\hat{u}(t,\check{X}^0_t,\check{X}_t,\mu_t,Y_t,Z^{11}_t)$, and plugging this expression into the controlled dynamics and BSDE (\ref{fo:BSDEminor}) gives us the following FBSDE:
\begin{equation}\label{fo:FBSDEminor}
\begin{cases}
d\check{X}^0_t=\partial_{y_0} H(t,\underline{\check{X}}_t,\mu_t,\underline{Y}_t,\underline{Z}_t,u^0_t,\hat{u}_t)dt+\partial_{z_{00}}H(t,\underline{\check{X}}_t,\mu_t,\underline{Y}_t,\underline{Z}_t,u^0_t,\hat{u}_t) dW^0_t,\\
d\check{X}_t=\partial_{y} H(t,\underline{\check{X}}_t,\mu_t,\underline{Y}_t,\underline{Z}_t,u^0_t,\hat{u}_t)dt+\partial_{z_{11}}H(t,\underline{\check{X}}_t,\mu_t,\underline{Y}_t,\underline{Z}_t,u^0_t,\hat{u}_t) dW_t,\\
dY^0_t=-\partial_{x_0} H(t,\underline{\check{X}}_t,\mu_t,\underline{Y}_t,\underline{Z}_t,u^0_t,\hat{u}_t)dt+Z^{00}_t dW^0_t+Z^{01}_t dW_t,\\
dY_t=-\partial_x H(t,\underline{\check{X}}_t,\mu_t,\underline{Y}_t,\underline{Z}_t,u^0_t,\hat{u}_t)dt+Z^{10}_t dW^0_t+Z^{11}_t dW_t,
\end{cases}
\end{equation}
with the initial and terminal conditions given by
$$
\check{X}^0_0=x^0_0,\quad
\check{X}_0=x_0,\qquad
Y^0_T=\partial_{x_0} g(\check{X}_T,\mu_T,\check{X}^0_T),\quad
Y_T=\partial_x g(\check{X}_T,\mu_T,\check{X}^0_T).
$$
We also need the following convexity assumption.

\noindent (\textbf{C}) The function $\mathbb{R}^d \times \mathcal{P}_2(\mathbb{R}^d) \times \mathbb{R}^{d_0} \ni (x,\mu,x_0) \hookrightarrow g(x,\mu,x_0)$ is convex in $(x_0,x)$. The function
$$\mathbb{R}^{d_0} \times \mathbb{R}^d \times \mathcal{P}_2(\mathbb{R}^d) \times U \ni (x_0,x,\mu,u) \hookrightarrow H(t,x_0,x,\mu,y_0,y,z_{00},z_{11},u_0,u)$$
is convex for all $(t,y_0,y,z_{00},z_{11},u_0)$. Then we have the following proposition.
\begin{proposition}
Assuming that (A1-2), (M) and (C) are in force, if $(\check{X}^0,\check{X},Y^0,Y,Z^{00},Z^{01},Z^{10},Z^{11}) \in \mathbb{S}^{2,d_0+d} \times \mathbb{S}^{2,d_0+d} \times \mathbb{H}^{2, (d_0+d)\times(d_0+d)}$ is a solution to the FBSDE (\ref{fo:FBSDEminor}), then an optimal control of the control problem (P2) is given by
$$u_t=\hat{u}(t,\check{X}^0_t,\check{X}_t,\mu_t,Y_t,Z^{11}_t),$$
and $(\check{X}^0,\check{X})$ is the associated optimally controlled state process.
\end{proposition}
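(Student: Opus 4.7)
The plan is to apply the standard sufficient condition of Pontryagin's stochastic maximum principle, adapted to problem (P2) where the major-player trajectory $\check{X}^0$ is exogenous to the minor player's control $u$ (since $u^0$ and $\mu$ are fixed parameters). Let $\hat{u}_t=\hat{u}(t,\check{X}^0_t,\check{X}_t,\mu_t,Y_t,Z^{11}_t)$ denote the candidate built from the FBSDE solution, and let $u\in\mathbb{H}^{2,k}(\FF^{W_0,W};U)$ be an arbitrary admissible control with associated state trajectory $\check{X}'$ (note that $\check{X}^0$ is common to both controls, since it depends only on $u^0$ and $\mu$). I want to show $J(u)\geq J(\hat{u})$.

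For the terminal cost, convexity of $g$ in its $\check{X}$-argument from assumption (\textbf{C}), combined with the terminal condition $Y_T=\partial_x g(\check{X}_T,\mu_T,\check{X}^0_T)$, yields $g(\check{X}'_T,\mu_T,\check{X}^0_T)-g(\check{X}_T,\mu_T,\check{X}^0_T)\geq Y_T\cdot(\check{X}'_T-\check{X}_T)$. Then I apply Itô's formula to $t\mapsto Y_t\cdot(\check{X}'_t-\check{X}_t)$, using \eqref{fo:BSDEminor} for $dY_t$ and \eqref{fo:minorsde} for the state SDEs. The initial term vanishes because $\check{X}'_0=\check{X}_0=x_0$, and the stochastic integrals against $dW^0_t$ and $dW_t$ have zero expectation by the $\mathbb{S}^2\times\mathbb{H}^2$ integrability assumed of the FBSDE solution together with the linear growth from (\textbf{A1}). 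Substituting $f=H-\langle y,b\rangle-\langle z_{11},\sigma\rangle$ from the definition of the reduced Hamiltonian \eqref{fo:minorH} then makes the $b$- and $\sigma$-contributions from Itô cancel against those coming from the running-cost difference, collapsing everything to
\[
J(u)-J(\hat{u})\geq \mathbb{E}\int_0^T \bigl[H(t,\check{X}'_t,\mu_t,\check{X}^0_t,Y_t,Z^{11}_t,u_t)-H(t,\check{X}_t,\mu_t,\check{X}^0_t,Y_t,Z^{11}_t,\hat{u}_t)-\partial_x H(t,\check{X}_t,\mu_t,\check{X}^0_t,Y_t,Z^{11}_t,\hat{u}_t)\cdot(\check{X}'_t-\check{X}_t)\bigr]\,dt.
\]

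To close the argument I invoke the convexity of $H$ in the joint variable $(x,u)$ from (\textbf{C}), which bounds the integrand below by $\partial_u H(t,\check{X}_t,\mu_t,\check{X}^0_t,Y_t,Z^{11}_t,\hat{u}_t)\cdot(u_t-\hat{u}_t)$. Since by (\textbf{M}) the control $\hat{u}_t$ is the minimizer of $u\mapsto H(t,\check{X}_t,\mu_t,\check{X}^0_t,Y_t,Z^{11}_t,u)$ over the convex set $U$, the first-order optimality condition for convex minimization renders this inner product nonnegative pointwise, giving $J(u)\geq J(\hat{u})$. The step requiring the most care is the Itô computation, where one needs $d\langle Y,\check{X}'-\check{X}\rangle_t = Z^{11}_t\bigl[\sigma(t,\check{X}'_t,\mu_t,\check{X}^0_t,u_t)-\sigma(t,\check{X}_t,\mu_t,\check{X}^0_t,\hat{u}_t)\bigr]\,dt$ — only the $dW_t$-part of $Y$ contributes because $W^0$ and $W$ are independent — and the verification that the $dW^0_t$ and $dW_t$ stochastic integrals are genuine martingales, which follows from (\textbf{A1}) and the $\mathbb{S}^2\times\mathbb{H}^2$ hypothesis on the FBSDE solution.
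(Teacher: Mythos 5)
Your proof is correct and follows exactly the route the paper intends: the paper states this proposition without proof, appealing to the standard sufficient Pontryagin maximum principle for the (non-Markovian, but otherwise classical) control problem (P2), and your verification argument --- convexity of $g$, It\^o's formula on $Y_t\cdot(\check{X}'_t-\check{X}_t)$, cancellation of the drift and diffusion terms through the reduced Hamiltonian, joint convexity of $H$ in $(x,u)$, and the first-order condition at the minimizer over the convex set $U$ --- is precisely that standard proof, with the correct observation that $\check{X}^0$ is unaffected by the deviation. No gaps.
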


\subsection{Nash equilibrium for the limiting two-player game}
By the very definition of Nash equilibria, the following proposition is self-explanatory.
\begin{proposition}
Assume that (A1-2), (M0), (M), (C0) and (C) are in force. Consider the following FBSDE:
\begin{equation}\label{fo:FBSDEMm}
\begin{cases}
dX^0_t=\partial_{p_0} H_0(t,\underline{X}_t,\mathcal{L}(X_t\vert\mathcal{F}^0_t),\underline{P}_t,\underline{Q}_t,\hat{u}^0_t,\hat{u}_t)dt+\partial_{q_{00}}H_0(t,\underline{X}_t,\mathcal{L}(X_t\vert\mathcal{F}^0_t),\underline{P}_t,\underline{Q}_t,\hat{u}^0_t,\hat{u}_t) dW^0_t,\\
dX_t=\partial_p H_0(t,\underline{X}_t,\mathcal{L}(X_t\vert\mathcal{F}^0_t),\underline{P}_t,\underline{Q}_t,\hat{u}^0_t,\hat{u}_t)dt+\partial_{q_{11}}H_0(t,\underline{X}_t,\mathcal{L}(X_t\vert\mathcal{F}^0_t),\underline{P}_t,\underline{Q}_t,\hat{u}^0_t,\hat{u}_t) dW_t,\\
d\check{X}^0_t=\partial_{y_0} H(t,\underline{\check{X}}_t,\mu_t,\underline{Y}_t,\underline{Z}_t,\hat{u}^0_t,\hat{u}_t)dt+\partial_{z_{00}}H(t,\underline{\check{X}}_t,\mu_t,\underline{Y}_t,\underline{Z}_t,\hat{u}^0_t,\hat{u}_t) dW^0_t,\\
d\check{X}_t=\partial_{y} H(t,\underline{\check{X}}_t,\mu_t,\underline{Y}_t,\underline{Z}_t,\hat{u}^0_t,\hat{u}_t)dt+\partial_{z_{11}}H(t,\underline{\check{X}}_t,\mu_t,\underline{Y}_t,\underline{Z}_t,\hat{u}^0_t,\hat{u}_t) dW_t,\\
dP^0_t=-\partial_{x_0} H_0(t,\underline{X}_t,\mathcal{L}(X_t\vert\mathcal{F}^0_t),\underline{P}_t,\underline{Q}_t,\hat{u}^0_t,\hat{u}_t)dt+Q^{00}_t dW^0_t+Q^{01}_t dW_t,\\
\begin{aligned}
dP_t=&-\partial_x H_0(t,\underline{X}_t,\mathcal{L}(X_t\vert\mathcal{F}^0_t),\underline{P}_t,\underline{Q}_t,\hat{u}^0_t,\hat{u}_t)dt+Q^{10}_t dW^0_t+Q^{11}_t dW_t\\
&-\mathbb{E}^{\mathcal{F}^0_t}[\partial_\mu H_0(t,\tilde{\underline{X}}_t,\mathcal{L}(\tilde{X}_t\vert\mathcal{F}^0_t),\tilde{\underline{P}}_t,\tilde{\underline{Q}}_t,\tilde{\hat{u}}^0_t,\tilde{\hat{u}}_t)(X_t)]dt
\end{aligned}\\
dY^0_t=-\partial_{x_0} H(t,\underline{\check{X}}_t,\mu_t,\underline{Y}_t,\underline{Z}_t,\hat{u}^0_t,\hat{u}_t)dt+Z^{00}_t dW^0_t+Z^{01}_t dW_t,\\
dY_t=-\partial_x H(t,\underline{\check{X}}_t,\mu_t,\underline{Y}_t,\underline{Z}_t,\hat{u}^0_t,\hat{u}_t)dt+Z^{10}_t dW^0_t+Z^{11}_t dW_t,
\end{cases}
\end{equation}
with the initial and terminal conditions given by
$$\begin{cases}X^0_0=x^0_0,\quad
X_0=x_0,\\
P^0_T=\partial_{x_0} g(X^0_T,\mathcal{L}(X_T\vert \mathcal{F}^{0}_t)),\\
P_T=\mathbb{E}^{\mathcal{F}^0_T}[\partial_\mu g(\tilde{X}^0_T,\mathcal{L}(\tilde{X}_T\vert \mathcal{F}^0_T))(X_T)],
\end{cases},\quad \begin{cases}
\check{X}^0_0=x^0_0,\quad \check{X}_0=x_0,\\
Y^0_T=\partial_{x_0} g(\check{X}_T,\mu_T,\check{X}^0_T),\\
Y_T=\partial_x g(\check{X}_T,\mu_T,\check{X}^0_T),
\end{cases}$$
where
$$\hat{u}^0_t=\hat{u}^0(t,X^0_t,\mathcal{L}(X_t \vert \mathcal{F}^0_t),\bar{P}^0_t,\bar{Q}^{00}_t), \quad \hat{u}_t=\hat{u}(t,\check{X}^0_t,\check{X}_t,\mu_t,Y_t,Z^{11}_t).$$
If this FBSDE has a solution, then $(\hat{u}^0,\hat{u})$ is a Nash equilibrium for the limiting two-player stochastic differential game.
\end{proposition}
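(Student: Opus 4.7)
The plan is to show that any solution to the combined FBSDE~\eqref{fo:FBSDEMm} decouples, once the two candidate feedback controls are read off from it, into the two best-response FBSDE systems \eqref{fo:FBSDEM} and \eqref{fo:FBSDEminor} studied in the preceding subsections; the conclusion then follows by applying the two preceding propositions to each player in turn.

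Concretely, given a solution $(X^0,X,\check X^0,\check X,\u{P},\u{Y},\u{Q},\u{Z})$ of \eqref{fo:FBSDEMm} in the stated spaces, I will set
$$
\hat u^0_t := \hat u^0\bigl(t,X^0_t,\mathcal{L}(X_t\vert\mathcal{F}^0_t),\bar P^0_t,\bar Q^{00}_t\bigr),\qquad \hat u_t := \hat u\bigl(t,\check X^0_t,\check X_t,\mu_t,Y_t,Z^{11}_t\bigr),
$$
and first check that these are admissible: $\hat u^0$ is $\FF^0$-progressively measurable by virtue of the conditional-expectation reduction $\bar{(\cdot)}$, while $\hat u$ is $\FF^{W^0,W}$-progressively measurable; square-integrability of both processes follows from the Lipschitz regularity of the minimizers guaranteed by (M0) and (M) combined with the $\mathbb S^2$-integrability of the FBSDE solution. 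Next, treating $\hat u$ as a fixed admissible minor-player control, the subsystem of~\eqref{fo:FBSDEMm} governing $(X^0,X,P^0,P,Q^{00},Q^{01},Q^{10},Q^{11})$, together with its initial and terminal conditions, is literally the FBSDE~\eqref{fo:FBSDEM} attached to problem (P1) with $u=\hat u$. The first of the two preceding propositions therefore yields
$$
J^0(\hat u^0,\hat u)\le J^0(u^0,\hat u) \quad\text{for every admissible }u^0.
$$

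Reversing roles, treat $\hat u^0$ as a fixed admissible major-player control; the subsystem of~\eqref{fo:FBSDEMm} governing $(\check X^0,\check X,Y^0,Y,Z^{00},Z^{01},Z^{10},Z^{11})$, together with its initial and terminal conditions, is precisely the FBSDE~\eqref{fo:FBSDEminor} associated to problem (P2) with $u^0=\hat u^0$, and the second preceding proposition then produces $J(\hat u^0,\hat u)\le J(\hat u^0,u)$ for every admissible $u$. The two inequalities together form the defining property of a Nash equilibrium for the two-player game with fixed stochastic measure flow $\mu$, which proves the claim. Consistent with the authors' remark that the result is self-explanatory, I anticipate no substantive obstacle beyond this bookkeeping; the only point deserving attention is that the convexity hypotheses (C0) and (C) must remain valid after one player's control is frozen to the other's optimizer, but since (C0) and (C) are stated uniformly in all the frozen variables (including the opposing control), no additional assumptions are required.
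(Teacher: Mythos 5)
Your proposal is correct and is exactly the argument the paper has in mind: the paper offers no written proof (it declares the proposition ``self-explanatory''), and the intended justification is precisely your decomposition of \eqref{fo:FBSDEMm} into the two best-response subsystems \eqref{fo:FBSDEM} and \eqref{fo:FBSDEminor}, each with the other player's control frozen, followed by an application of the two preceding propositions. The only cosmetic caveat is that (M0) and (M) assert existence and uniqueness of the Hamiltonian minimizers but not their Lipschitz regularity, so the admissibility (in particular the square-integrability) of $\hat u^0$ and $\hat u$ is, strictly speaking, an implicit standing assumption rather than a consequence of the stated hypotheses.
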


\subsection{The consistency condition}
The last step in the scheme amounts to imposing the consistency condition which writes
$$\mu_t=\mathcal{L}(X_t\vert \mathcal{F}^0_t), \quad \forall t \in [0,T].$$
Plugging it into FBSDE (\ref{fo:FBSDEMm}) gives the following ultimate FBSDE:
\begin{equation}\label{fo:Ultimate}
\begin{cases}
dX^0_t=\partial_{p_0} H_0(t,\underline{X}_t,\mathcal{L}(X_t\vert\mathcal{F}^0_t),\underline{P}_t,\underline{Q}_t,\hat{u}^0_t,\hat{u}_t)dt+\partial_{q_{00}}H_0(t,\underline{X}_t,\mathcal{L}(X_t\vert\mathcal{F}^0_t),\underline{P}_t,\underline{Q}_t,\hat{u}^0_t,\hat{u}_t) dW^0_t,\\
dX_t=\partial_p H_0(t,\underline{X}_t,\mathcal{L}(X_t\vert\mathcal{F}^0_t),\underline{P}_t,\underline{Q}_t,\hat{u}^0_t,\hat{u}_t)dt+\partial_{q_{11}}H_0(t,\underline{X}_t,\mathcal{L}(X_t\vert\mathcal{F}^0_t),\underline{P}_t,\underline{Q}_t,\hat{u}^0_t,\hat{u}_t) dW_t,\\
dP^0_t=-\partial_{x_0} H_0(t,\underline{X}_t,\mathcal{L}(X_t\vert\mathcal{F}^0_t),\underline{P}_t,\underline{Q}_t,\hat{u}^0_t,\hat{u}_t)dt+Q^{00}_t dW^0_t+Q^{01}_t dW_t,\\
\begin{aligned}
dP_t=&-\partial_x H_0(t,\underline{X}_t,\mathcal{L}(X_t\vert\mathcal{F}^0_t),\underline{P}_t,\underline{Q}_t,\hat{u}^0_t,\hat{u}_t)dt+Q^{10}_t dW^0_t+Q^{11}_t dW_t\\
&-\mathbb{E}^{\mathcal{F}^0_t}[\partial_\mu H_0(t,\tilde{\underline{X}}_t,\mathcal{L}(\tilde{X}_t\vert\mathcal{F}^0_t),\tilde{\underline{P}}_t,\tilde{\underline{Q}}_t,\tilde{\hat{u}}^0_t,\tilde{\hat{u}}_t)(X_t)]dt
\end{aligned}\\
dY^0_t=-\partial_{x_0} H(t,\underline{X}_t,\mathcal{L}(X_t\vert\mathcal{F}^0_t),\underline{Y}_t,\underline{Z}_t,\hat{u}^0_t,\hat{u}_t)dt+Z^{00}_t dW^0_t+Z^{01}_t dW_t,\\
dY_t=-\partial_x H(t,\underline{X}_t,\mathcal{L}(X_t\vert\mathcal{F}^0_t),\underline{Y}_t,\underline{Z}_t,\hat{u}^0_t,\hat{u}_t)dt+Z^{10}_t dW^0_t+Z^{11}_t dW_t,
\end{cases}
\end{equation}
with initial and terminal conditions given by
\begin{equation}
\begin{cases}
X^0_0=x^0_0, \quad X_0=x_0,\\
P^0_T=\partial_{x_0} g(X^0_T,\mathcal{L}(X_T \vert \mathcal{F}^0_T)),\\
P_T=\mathbb{E}^{\mathcal{F}^0_T}[\partial_\mu g(\tilde{X}^0_T,\mathcal{L}(\tilde{X}_T \vert\mathcal{F}^0_T))(X_T)],\\
Y^0_T=\partial_{x_0} g(X_T, \mathcal{L}(X_T\vert\mathcal{F}^0_T),X^0_T),\\
Y_T=\partial_x g(X_T, \mathcal{L}(X_T\vert\mathcal{F}^0_T),X^0_T).
\end{cases}
\end{equation}
where this time we define
$$\hat{u}^0_t=\hat{u}^0(t,X^0_t,\mathcal{L}(X_t\vert\mathcal{F}^0_t),\bar{P}^0_t,\bar{Q}^{00}_t), \quad \hat{u}_t=\hat{u}(t,X^0_t,X_t,\mathcal{L}(X_t\vert\mathcal{F}^0_t),Y_t,Z^{11}_t).$$
\begin{remark}
Note that after implementing the consistency condition, $(X^0,X)$ and $(\check{X}^0,\check{X})$ become the same. We can also check that if we replace the current consistency condition by
$$\mu_t=\mathcal{L}(\check{X}_t\vert \mathcal{F}^0_t), \quad \forall t\in[0,T]$$
we arrive at the same FBSDE as above.
\end{remark}
\begin{remark}\label{rk:redundant}
In the limiting control problem faced by the representative minor player, the dynamic of the major player is not affected by the control $u$ and can be considered given. As a result, the adjoint process $Y^0$ is redundant and independent of the rest of the system, and could have been discarded from the system (\ref{fo:Ultimate}). It is there in (\ref{fo:Ultimate}) because we want to write the system in a symmetric and compact fashion using the Hamiltonians $H_0$ and $H$.
\end{remark}

The solvability of conditional McKean-Vlasov FBSDEs in the form of (\ref{fo:Ultimate}) is a hard problem. If the conditional distributions in (\ref{fo:Ultimate}) are replaced by plain distributions, the resulting FBSDEs are usually called ``mean field FBSDEs'' and are studied in some recent papers, see for example \cite{CarmonaDelarue_ecp}. The conditioning with respect to $\mathcal{F}^0_t$ makes (\ref{fo:Ultimate}) substantially harder to solve compared to the ones already considered in the literature, and we leave the well-posedness of FBSDEs of the form of (\ref{fo:Ultimate}) to future research.

\section{Propagation of chaos and $\epsilon$-Nash equilibrium}
\label{se:approximate}
In this section we prove a central result stating that, when we apply the optimal control law found in the limiting regime to all the players in the original $N$-player game, we will find an approximate Nash equilibrium. This justifies the whole scheme as an effective way to find approximate Nash equilibria for the finite-player games. Throughout this section we assume that (A1-4), (M), (M0), (C) and (C0) hold. In addition, we assume that

\noindent (\textbf{A5}) The diffusion coefficients $\sigma_0$ and $\sigma$ are constants.

Assumption (A5) is too strong for what we really need. We should merely assume that the two volatility $\sigma_0$ and $\sigma$ are independent of the controls $u^0$ and $u$. All the derivations given below can be adapted to this more general setting, but in order to limit the complexity of the formulas appearing in the arguments, we limit ourselves to assumption (A5).

Let's first recall the finite-player game setup under the assumption (A5): the controlled dynamics are now given by
\begin{equation}\label{fo:SDEfinite}
\begin{cases}
dX^{0,N}_t=b_0(t,X^{0,N}_t,\mu^N_t,u^{0,N}_t)dt+\sigma_0 dW^0_t, \quad X^{0,N}_0=x^0_0,\\
dX^{i,N}_t=b(t,X^{i,N}_t,\mu^N_t,X^{0,N}_t,u^{i,N}_t)dt+\sigma dW^i_t, \quad X^{i,N}_0=x_0, \quad i=1,2,...,N,
\end{cases}
\end{equation}
and the cost functionals by
$$\begin{aligned}
&J^{0,N}=\mathbb{E}\left[\int^T_0 f_0(t,X^{0,N}_t,\mu^N_t,u^{0,N}_t)dt+g_0(X^{0,N}_T,\mu^N_T)\right],\\
&J^{i,N}=\mathbb{E}\left[\int^T_0 f(t,X^{i,N}_t,\mu^N_t,X^{0,N}_t,u^{i,N}_t)dt+g(X^{i,N}_T,\mu^N_T,X^{0,N}_T)\right], 1 \leq i \leq N.
\end{aligned}$$
The sets of admissible controls for this $(N+1)$-player game are defined as follows.

\begin{definition}
In the above $(N+1)$-player game, a process $u^{0,N}$ is said to be admissible for the major player if  $u^{0,N}\in\HH^{2,d_0}(\FF^0,U_0)$ and it is said to be $\kappa$-admissible for the major player if additionally we have

\begin{equation}
\label{fo:adm}
\mathbb{E}\left[\int^T_0 \vert u^{0,N}_t \vert^p\right] \leq \kappa.
\end{equation}
with $i=0$ and $p=d+5$. On the other hand, a process $u^{i,N}$ is said to be admissible for the $i$-th minor player if $u^{1,N}\in\HH^{2,d}(\FF^{W^0,W^1,\cdots,W^N},U)$, and  $\kappa$-admissible for the $i$-th minor player if additionally it satisfies \eqref{fo:adm} with $p=2$.
The set of admissible controls and $\kappa$-admissible controls for the $i$-th player are respectively denoted by $\mathcal{A}_i$ and $\mathcal{A}^\kappa_i$, $i \geq 0$. Note that $\mathcal{A}_i$ and $\mathcal{A}^\kappa_i$ are independent of $i \geq 1$.
\end{definition}

Note that due to (A1-3), for all $(u^{0,N}, u^{1,N},..., u^{N,N}) \in \prod^N_{i=0}\mathcal{A}_i$, the controlled SDE (\ref{fo:SDEfinite}) always has a unique solution. On the other hand, we will see that the notion of $\kappa$-admissible controls plays an important role in Theorem \ref{th:CLT} to obtain a quantitative uniform speed of convergence. We then give the definition of $\epsilon$-Nash equilibrium in the context of the above finite-player game.
\begin{definition}
A set of admissible controls $(u^{0,N},u^{1,N},...,u^{N,N}) \in \prod^N_{i=0}\mathcal{A}_i$ is called an $\epsilon$-Nash equilibrium in $\mathcal{A}^\kappa_0 \times \prod^N_{i=1}\mathcal{A}^\kappa_i$ for the above $(N+1)$-player stochastic differential game if for all $u^{0} \in \mathcal{A}^\kappa_0$ we have
$$J^{0,N}(u^{0,N},u^{1,N},...,u^{N,N})-\epsilon \leq J^{0,N}(u^0,u^{1,N},...,u^{N,N}),$$
and for all $1 \leq i \leq N$ and $u \in \mathcal{A}^\kappa_i$ we have
$$J^{i,N}(u^{0,N},u^{1,N},...,u^{N,N})-\epsilon \leq J^{i,N}(u^{0,N},...,u^{i-1,N},u,u^{i+1,N},...,u^{N,N}).$$
\end{definition}

The following lemma is useful to derive explicit bounds on the rate of convergence of approximate Nash equilibrium. In order to obtain a quantitative convergence estimate, we rely on the following result of Horowitz and Karandikar  which can be found in \cite{RachevRuschendorf}.
\begin{lemma}
\label{le:RR}
Let $(X_n)$ be a sequence of exchangeable random variables taking values in $\mathbb{R}^d$ with directing (random) measure $\mu$ satisfying
$$
c:=\int \vert u \vert^{d+5}\beta(du) < \infty.
$$
where $\beta$ is the marginal of $\mu$ in the sense that $\beta(A)=\EE[\mu(A)]$.
Then there exists a constant $C$ depending only upon $c$ and $d$ such that
$$
\mathbb{E}[W^2_2(\mu^N,\mu)]\leq c N^{-2/(d+4)},$$
where as usual, $\mu^N$ is the empirical measure of $X_1, \cdots, X_N$.
\end{lemma}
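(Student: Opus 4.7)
The plan is to reduce the exchangeable case to the i.i.d.\ case via de Finetti's theorem and then to invoke the classical Horowitz--Karandikar rate of convergence for the empirical measure of an i.i.d.\ sample in the 2-Wasserstein distance.

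First, by de Finetti's theorem applied to the exchangeable sequence $(X_n)_{n\ge 1}$, conditionally on the directing random measure $\mu$ the variables $X_1, X_2, \ldots$ are i.i.d.\ with common law $\mu$; in particular, conditionally on $\mu$, the empirical measure $\mu^N = \frac{1}{N}\sum_{i=1}^N \delta_{X_i}$ coincides in distribution with the empirical measure of $N$ i.i.d.\ draws from $\mu$. Second, I would invoke the Horowitz--Karandikar estimate (in the form stated in \cite{RachevRuschendorf}): for any probability measure $\nu$ on $\RR^d$ with $M_{d+5}(\nu) := \int |u|^{d+5}\nu(du) < \infty$, an i.i.d.\ sample $Y_1,\dots,Y_N\sim\nu$ with empirical measure $\nu^N$ satisfies
$$
\EE\!\left[W_2^2(\nu^N,\nu)\right] \;\le\; K_d\bigl(1 + M_{d+5}(\nu)\bigr)\, N^{-2/(d+4)},
$$
for a constant $K_d$ depending only on $d$. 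Applied conditionally on $\mu$ and then integrated out, together with Fubini and the definition $\beta(A) = \EE[\mu(A)]$ which yields $\EE[M_{d+5}(\mu)] = \int |u|^{d+5}\beta(du) = c$, this gives
$$
\EE\!\left[W_2^2(\mu^N,\mu)\right] \;=\; \EE\!\left[\EE[W_2^2(\mu^N,\mu)\mid\mu]\right] \;\le\; K_d(1+c)\, N^{-2/(d+4)},
$$
which is the claimed bound, modulo the apparent typographical point that the constant on the right-hand side of the lemma statement should read $C = C(c,d)$ rather than $c$ itself.

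The only substantive step is the i.i.d.\ Wasserstein rate invoked above, and it is there that I expect the main technical work to lie. Its standard proof, carried out in detail in \cite{RachevRuschendorf}, proceeds by nested dyadic partitioning of $\RR^d$: one controls the number of sample points falling in each cube via binomial concentration, bounds the $W_2^2$-cost of rearranging mass within a cube of side $\varepsilon$ by a constant times $\varepsilon^2$, and optimizes the scale of the partition against $N$ while using the $(d+5)$-th moment to discard the contribution of far-away cubes. The exponent $2/(d+4)$ arises as the solution of this trade-off; once this i.i.d.\ rate is granted, the passage to the exchangeable setting via de Finetti is essentially a one-line manipulation, which is why I have highlighted the i.i.d.\ estimate as the only non-routine ingredient.
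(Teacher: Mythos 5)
Your proposal is correct in outline, but note that the paper itself offers no proof of this lemma: it simply cites the result of Horowitz and Karandikar as stated in \cite{RachevRuschendorf}, where it is already formulated for exchangeable sequences with a directing measure, exactly as in the lemma. Your de Finetti reduction to the i.i.d.\ case followed by the i.i.d.\ Wasserstein rate is the standard way this exchangeable version is obtained, so you have essentially reconstructed the argument behind the cited reference rather than diverged from the paper. The one point worth making explicit in your conditioning step is that the i.i.d.\ bound must depend on the moment $M_{d+5}(\nu)$ in an affine (or at least concave) way for the outer expectation over $\mu$ to close via $\EE[M_{d+5}(\mu)] = \int |u|^{d+5}\beta(du) = c$; this is indeed the form in which the Horowitz--Karandikar estimate is proved, but as written you assert it rather than verify it, and if the constant entered nonlinearly and convexly in the moment the final integration would not reduce to $c$ alone. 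You are also right that the right-hand side of the lemma as printed should carry the constant $C=C(c,d)$ rather than $c$ itself; this is a typo in the paper.
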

Recall that the \emph{directing measure} of the sequence is the almost sure limit as $N\to\infty$ of the empirical measures $\mu^N$
Before stating and proving the central theorem of this section, we introduce two additional assumptions.

\noindent (\textbf{A7}) The FBSDE (\ref{fo:Ultimate}) admits a unique solution. Moreover, there exists a random decoupling field $\theta:[0,T] \times \Omega \times \mathbb{R}^{d_0} \times \mathbb{R}^d \hookrightarrow \theta(t,\omega,x_0,x)$ such that
$$Y_t=\theta(t,X^0_t,X_t),\quad \text{a.s..}$$
Finally $\theta$ satisfies:\\
(1) There exists a constant $c_\theta$ such that
$$\vert \theta(t,\omega,x'_0,x')-\theta(t,\omega,x_0,x)\vert \leq c_\theta(\vert x'_0-x_0\vert+\vert x'-x\vert).$$
(2) For all $(t,x_0,x) \in [0,T]\times\mathbb{R}^{d_0} \times \mathbb{R}^d$, $\theta(t,\cdot,x_0,x)$ is $\mathcal{F}^0_t$-measurable.

The concept of (deterministic) decoupling field lies at the core of many investigations of the well-posedness of standard FBSDEs, see for example \cite{Delarue02,MaYong}. Its non-Markovian counterpart corresponding to non-Markovian FBSDEs was introduced in \cite{MaWuZhangZhang}. The possibility of applying existing results concerning the well-posedness of non-Markovian FBSDEs is appealing, but due to the conditional McKean-Vlasov nature of FBSDE (\ref{fo:Ultimate})  a general sufficient condition is hard to come by, and it is highly likely that well-posedness can only be established on a case-by-case basis. A concrete sufficient condition of well-posedness and the existence of a decoupling field will be given in Section 5 for Linear Quadratic Gaussian (LQG for short) models.

The following theorem is the central result in this section. It stipulates that when the number of players is sufficiently large, the solution of the limiting problem provides  approximate Nash equilibriums. Note that an important consequence of assumption (A5) is that the minimizer $\hat{u}^0$ identified in the previous section is now independent of $q_{00}$, and by an abuse of notation, we use $\hat{u}^0(t,x_0)$ to denote $\hat{u}^0(t,x_0,\mathcal{L}(X_t\vert\mathcal{F}^0_t), \bar{P}^0_t)$. Accordingly, $\hat{u}$ is now independent of $z_{11}$, and if we assume that (A7) is in force, $Y_t$ can then be written as $\theta(t, X^0_t, X_t)$, and again by a similar abuse of notation we use $\hat{u}(t,x_0,x)$ to denote $\hat{u}(t,x_0,x,\mathcal{L}(X_t\vert\mathcal{F}^0_t),\theta(t,x_0,x))$, where $X$, $P^0$ solve the FBSDE (\ref{fo:Ultimate}). Finally we impose

\noindent (\textbf{A8}) There exists a constant $c$ such that for all $t \in [0,T]$ and $x'_0,x_0 \in \mathbb{R}^{d_0}$,
$$\vert \hat{u}^0(t,x'_0)-\hat{u}^0(t,x_0)\vert \leq c \Vert x'_0-x_0\Vert,\quad \text{a.s..}$$
Moreover,
$$\mathbb{E}\left[\int^T_0 \vert \hat{u}^0(t,0)\vert^2 dt\right]< \infty.$$

\begin{theorem}
\label{th:CLT}
There exists a sequence $(\epsilon_N)_{N \geq 1}$ and a non-decreasing function $\rho: \mathbb{R}^+ \rightarrow \mathbb{R}^+$ such that \\
(i) There exists a constant $c$ such that for all $N \geq 1$,
$$\epsilon_N \leq c N^{-1/(d+4)}.$$
(ii) The feedback profile $(\hat{u}^0(t,X^{0,N}_t),(\hat{u}(t,X^{0,N}_t,X^{i,N}_t))_{1 \leq i \leq N})$ forms an $(\rho(\kappa)\epsilon_N)$-Nash equilibrium for the $(N+1)$-player game when the admissible control sets are taken as $\mathcal{A}^\kappa_0 \times \prod^N_{i=1} \mathcal{A}^\kappa_i$.
\end{theorem}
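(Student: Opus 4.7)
My plan is to couple the finite $(N+1)$-player system run under the proposed feedback profile to the two-player limit from Section \ref{se:mfg}, and then transfer the Nash-equilibrium property of the limit into an $O(N^{-1/(d+4)})$ sub-optimality in the finite game. First I would introduce the reference system $(X^0, (X^i)_{i \geq 1})$ in which the major player uses $\hat{u}^0(t, X^0_t)$ and each minor player $i \geq 1$ uses $\hat{u}(t, X^0_t, X^i_t)$ driven by its own Brownian motion $W^i$. By uniqueness of the limiting FBSDE (\ref{fo:Ultimate}) under (A7), the processes $(X^i)_{i \geq 1}$ are conditionally i.i.d.\ given $\mathcal{F}^0$ with common conditional law $\mathcal{L}(X_t|\mathcal{F}^0_t)$. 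Applying Lemma \ref{le:RR} conditionally on $\mathcal{F}^0_T$ to this exchangeable family, and combining it with a conditional Sznitman-type coupling argument made rigorous in Section \ref{se:conditional_chaos}, I would obtain
$$\mathbb{E}\bigl[\sup_{0 \leq t \leq T} |X^{0,N}_t - X^0_t|^2\bigr] + \max_{1 \leq i \leq N} \mathbb{E}\bigl[\sup_{0 \leq t \leq T} |X^{i,N}_t - X^i_t|^2\bigr] \leq C N^{-2/(d+4)}.$$
The argument relies on (A5) (only the drift needs to be handled), on (A7) to make $\hat{u}(t,x_0,x) = \hat{u}(t,x_0,x,\mathcal{L}(X_t|\mathcal{F}^0_t),\theta(t,x_0,x))$ a Lipschitz feedback in $(x_0,x)$, on (A8) for the Lipschitzness of $\hat{u}^0$, and on the moment bounds from (A1)--(A3).

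To handle a deviation of the major player, fix $u^0 \in \mathcal{A}^\kappa_0$ and let $(\bar{X}^{0,N}, (\bar{X}^{i,N})_{i \geq 1})$ be the finite system in which the major plays $u^0$ while the minor players keep the feedback $\hat{u}(t, \bar{X}^{0,N}_t, \bar{X}^{i,N}_t)$, and let $(\bar{X}^0, \bar{X})$ be the solution of (\ref{fo:limitSDE}) with $u^0$ replacing $\hat{u}^0$ and the consistency $\mu_t = \mathcal{L}(\bar{X}_t | \mathcal{F}^0_t)$ enforced. The same conditional propagation-of-chaos argument yields an $N$-uniform $L^2$-rate $N^{-1/(d+4)}$ for $|\bar{X}^{0,N}_t - \bar{X}^0_t|$ and for $W_2(\mu^N_t, \mathcal{L}(\bar{X}_t | \mathcal{F}^0_t))$, with the constant now depending on $\kappa$ through the $(d+5)$-th moment bound in \eqref{fo:adm}, which is exactly what Lemma \ref{le:RR} requires. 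Converting this $L^2$-bound into a cost bound by (A3) gives $|J^{0,N}(u^0, \hat{u}^N) - J^0(u^0, \hat{u})| \leq \rho(\kappa)\epsilon_N$, where $\hat{u}^N$ denotes the minor players' collective feedback. Combining this with the analogous estimate for the undeviated profile and with the optimality $J^0(\hat{u}^0, \hat{u}) \leq J^0(u^0, \hat{u})$ in the limit game produces the $\epsilon$-Nash inequality for the major player.

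For a deviation of one minor player, say player $1$, to $u \in \mathcal{A}^\kappa_1$, the key observation is that a single-player deviation shifts $\mu^N_t$ by at most $(|X^{1,N}_t| + |\hat{X}^{1,N}_t|)/N$ in the $W_2$ metric, so by Gronwall the remaining dynamics of the major player and of the $N-1$ undeviating minor players differ from the equilibrium configuration by $O(N^{-1/2})$ in $L^2$. Combining this with the propagation-of-chaos estimate from the first paragraph, one shows that $|J^{1,N}(\hat{u}^{0,N}, u, \hat{u}^{2,N}, \ldots, \hat{u}^{N,N}) - J(\hat{u}^0, u)| \leq \rho(\kappa)\epsilon_N$, where $J(\hat{u}^0, u)$ is the limiting cost of the representative minor player when she uses $u$ against the major's equilibrium strategy. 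The Nash property in the limit gives $J(\hat{u}^0, u) \geq J(\hat{u}^0, \hat{u})$; since by exchangeability $J^{i,N}(\hat{u}^{0,N}, \hat{u}^N)$ is within $\rho(\kappa)\epsilon_N$ of $J(\hat{u}^0, \hat{u})$ for every $i \geq 1$, the $\epsilon$-Nash inequality for the minor players follows.

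The main technical obstacle is the conditional propagation-of-chaos estimate underlying all three steps. The minor players' states depend on the common random source $W^0$ both through the major's feedback and through the drift $b$, so the classical Sznitman coupling with independent i.i.d.\ copies must be replaced by a coupling in which every $X^i$ is driven by the \emph{same} $W^0$ and only the idiosyncratic noises $W^i$ are independent, producing a sequence that is merely conditionally i.i.d.\ given $\mathcal{F}^0$; this is precisely the setting of Lemma \ref{le:RR} and the reason for requiring a $(d+5)$-th moment in the definition of $\kappa$-admissibility. Once this conditional propagation of chaos and its quantitative rate are available from Section \ref{se:conditional_chaos}, the rest of the proof is Gronwall-plus-Cauchy-Schwarz bookkeeping, with the function $\rho$ obtained by tracking how the moment bounds on the deviating controls enter through (A3).
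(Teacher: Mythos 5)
Your proposal follows the same architecture as the paper's proof: couple the finite system under the candidate profile to the limiting nonlinear processes via the conditional propagation of chaos of Section \ref{se:conditional_chaos} (Theorem \ref{tpropagation}, which rests on Lemma \ref{le:RR} and the conditional i.i.d.\ structure), convert the resulting $N^{-2/(d+4)}$ state estimates into cost estimates through the local Lipschitz bound (A3), invoke the optimality in the limiting two-player game, and track the dependence on $\kappa$ through the $(d+5)$-th moment of the deviating control. The identification of why the $(d+5)$-th moment is needed and how $\rho(\kappa)$ arises is exactly the paper's mechanism.

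There is, however, one concrete point where your argument departs from the statement being proved and would not close as written. When the major player deviates to $u^0\in\mathcal{A}^\kappa_0$, you let the non-deviating minor players play the \emph{feedback} $\hat{u}(t,\bar{X}^{0,N}_t,\bar{X}^{i,N}_t)$ evaluated along the \emph{perturbed} states. But the $\epsilon$-Nash property in Definition 4.2 is open-loop: the minor players' admissible controls are fixed stochastic processes, so under a major deviation their controls must remain the processes $\hat{u}(t,\hat{X}^{0,N}_t,\hat{X}^{i,N}_t)$ realized along the unperturbed equilibrium trajectories, which is exactly what the paper does in the perturbed system (\ref{fo:tildefinite}). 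This is not a cosmetic difference: a unilateral major deviation is $O(1)$, so the minor states, and hence the feedback controls, shift by $O(1)$, and the two conventions produce genuinely different perturbed systems and different comparison costs $\tilde{J}^{0,N}$. Moreover, your final inequality $J^0(\hat{u}^0,\hat{u})\le J^0(u^0,\hat{u})$ is only available from the paper's Pontryagin analysis of problem (P1), in which the representative minor's control enters as a \emph{given process}; if the minor's control is instead a feedback law reacting to the major's state, the major faces a different control problem whose adjoint system would pick up extra terms from differentiating through $\hat{u}(t,x_0,x)$, and the optimality of $\hat{u}^0$ against that feedback is not established anywhere in the paper. Replacing your perturbed system by the open-loop one of (\ref{fo:tildefinite}) (and noting, as the paper does, that the frozen controls $\hat{u}(t,\hat{X}^{i,N}_t)$ are not $\mathcal{F}^0$-adapted, which is why the unperturbed and perturbed limiting systems must be coupled jointly before applying Theorem \ref{tpropagation}) repairs the step and brings your proof in line with the paper's.
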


\begin{proof}
For a fixed $N$, we start with investigating what happens if the major player deviates from the strategy $\hat{u}^0(t,\hat{X}^{0,N}_t)$ unilaterally.  When all the players apply the feedback controls identified in the statement of the theorem, the resulting controlled state processes will be denoted by $(\hat{X}^{i,N})_{i \geq 0}$ and solve
\begin{equation}\label{fo:hatfinite}
\begin{cases}
d\hat{X}^{0,N}_t=b_0(t,\hat{X}^{0,N}_t, \hat{\mu}^N_t, \hat{u}^0(t,\hat{X}^{0,N}_t))dt+\sigma_0 dW^0_t,\quad \hat{X}^{0,N}_0=x^0_0,\\
d\hat{X}^{i,N}_t=b(t,X^{i,N}_t,\hat{\mu}^N_t,\hat{X}^0_t, \hat{u}(t,\hat{X}^{0,N}_t,\hat{X}^{i,N}_t))dt+\sigma dW^i_t, \quad \hat{X}^{i,N}_0=x_0, \quad i \geq 1,
\end{cases}
\end{equation}
where the empirical measures are defined as in \eqref{fo:muN}.
Following the approach presented in Section \ref{se:conditional_chaos}, we define the limiting nonlinear processes as the solution of
\begin{equation}\label{fo:hatlimit}
\begin{cases}
d\hat{X}^0_t=b_0(t,\hat{X}^0_t,\mathcal{L}(\hat{X}^1_t|\mathcal{F}^0_t),\hat{u}^0(t,\hat{X}^0_t))dt+\sigma_0 dW^0_t,\quad \hat{X}^0_0=x^0_0,\\
d\hat{X}^i_t=b(t,\hat{X}^i_t,\mathcal{L}(\hat{X}^1_t|\mathcal{F}^0_t),\hat{X}^0_t,\hat{u}(t,\hat{X}^0_t,\hat{X}^i_t))dt+\sigma dW^i_t, \quad \hat{X}^{i}_0=x_0, \quad i \geq 1.
\end{cases}
\end{equation}
The stochastic measure flow $\mathcal{L}(\hat{X}^1_t\vert\mathcal{F}^0_t)$ will be sometimes denoted by $\hat{\mu}_t$ in the following. A direct application of Theorem \ref{tpropagation} in Section \ref{se:conditional_chaos} yields the existence of a constant $\hat{c}$ such that
\begin{equation}\label{fo:hatcon}
\max_{0 \leq i \leq N}\mathbb{E}\left[\sup_{0 \leq t \leq T} \vert \hat{X}^{i,N}_t-\hat{X}^i_t\vert^2\right] \leq \hat{c} N^{-2/(d+4)},
\end{equation}
and by applying the usual upper bound for 2-Wasserstein distance we also have
\begin{equation}\label{fo:hatcon2}
\mathbb{E}\left[\sup_{0 \leq t \leq T} W^2_2\left(\hat{\mu}^N_t, \frac{1}{N}\sum^N_{i=1}\delta_{\hat{X}^i_t}\right)\right] \leq \hat{c}N^{-2/(d+4)},
\end{equation}
where $\hat{c}$ depends upon $T$, the Lipschitz constants of $b_0$, $b$, $\hat{u}^0$ and $\hat{u}$, and
$$\hat{\eta}=\mathbb{E}\int^T_0 \vert \hat{X}^1_t\vert^{d+5} dt.$$
Now we turn our attention to the cost functionals. We define
$$\begin{aligned}
&\hat{J}^{0,N}=\mathbb{E}\left[\int^T_0 f_0(t,\hat{X}^{0,N}_t,\hat{\mu}^N_t,\hat{u}^0(t,\hat{X}^{0,N}_t))dt+g_0(\hat{X}^{0,N}_T,\hat{\mu}^N_T)\right],\\
&\hat{J}^{0}=\mathbb{E}\left[\int^T_0 f_0(t,\hat{X}^0_t,\hat{\mu}_t,\hat{u}^0(t,\hat{X}^0_t))dt+g_0(\hat{X}^0_T,\hat{\mu}_T)\right],
\end{aligned}$$
and we have, by assumptions (A3) and (A7), that
\begin{equation}\label{fo:hatcost}
\begin{aligned}
&\vert\hat{J}^{0,N}-\hat{J}^0\vert
 = \bigg| \mathbb{E}\left[\int^T_0 f_0(t,\hat{X}^{0,N}_t,\hat{\mu}^N_t,\hat{u}^0(t,\hat{X}^{0,N}_t))+g_0(\hat{X}^{0,N}_T,\hat{\mu}^N_T)\right]\\
   &-\mathbb{E}\left[\int^T_0 f_0(t,\hat{X}^0_t,\hat{\mu}_t,\hat{u}^0(t,\hat{X}^0_t))+g_0(\hat{X}^0_T, \mu_T)\right] \bigg| \\
 \leq & \mathbb{E}\int^T_0 c\left( 1+\vert \hat{X}^{0,N}_t\vert+\vert \hat{X}^0_t\vert +\vert\hat{u}^0(t,\hat{X}^{0,N}_t)\vert+\vert \hat{u}^0(t,\hat{X}^0_t)\vert+M_2(\hat{\mu}^N_t)+M_2(\hat{\mu}_t)\right)\\
 & \quad \quad \quad \left(\vert \hat{X}^{0,N}_t-\hat{X}^0_t\vert+W_2(\hat{\mu}^N_t,\hat{\mu}_t)\right)dt\\
 \leq & c\mathbb{E}\left[\int^T_0 1+\vert \hat{X}^{0,N}_t\vert^2+\vert \hat{X}^0_t\vert^2+\frac{1}{N}\sum^N_{i=1}\vert \hat{X}^{i,N}_t\vert^2+\vert \hat{X}^1_t\vert^2 dt\right]^{1/2} \mathbb{E}\left[\int^T_0 \vert \hat{X}^{0,N}_t-\hat{X}^0_t\vert^2+W^2_2(\hat{\mu}^N_t,\hat{\mu}_t) dt\right]^{1/2}
\end{aligned}
\end{equation}
and by applying (\ref{fo:hatcon}) and (\ref{fo:hatcon2}) we deduce that
\begin{equation}\label{fo:hatJ}
\hat{J}^{0,N}=\hat{J}^0+O(N^{-1/(d+4)}).
\end{equation}
Assume now that the major player uses a different admissible control $v^0 \in \mathcal{A}^\kappa_0$, and other minor players keep using the strategies $(\hat{u}(t,\hat{X}^{i,N}_t))_{i \geq 1}$. The resulting perturbed state processes will be denoted by $(\tilde{X}^{i,N}_t)_{i \geq 0}$ and is the solution of the system
\begin{equation}\label{fo:tildefinite}
\begin{cases}
d\tilde{X}^{0,N}_t=b_0(t,\tilde{X}^{0,N}_t,\tilde{\mu}^N_t, v^0_t)dt+\sigma_0 dW^0_t, \quad \tilde{X}^{0,N}_0=x^0_0,\\
d\tilde{X}^{i,N}_t=b(t,\tilde{X}^{i,N}_t,\tilde{\mu}^N_t,\tilde{X}^{0,N}_t,\hat{u}(t,\hat{X}^{i,N}_t))dt+\sigma dW^i_t, \quad \tilde{X}^{i,N}_0=x_0, \quad 1 \leq i \leq N,
\end{cases}
\end{equation}
where as usual, $\tilde{\mu}^N_t$ denotes the empirical distribution of the $\tilde{X}^{i,N}_t$. 
Note that $\hat{X}^{i,N}$ is not $\mathcal{F}^0_t$-progressively measurable in general, in order to apply Theorem \ref{tpropagation} we combine (\ref{fo:hatfinite}) and (\ref{fo:tildefinite}) and consider the limiting nonlinear processes defined as the solution of
\begin{equation}
\begin{cases}
d\hat{X}^0_t=b_0(t,\hat{X}^0_t,\mathcal{L}(\hat{X}^1_t|\mathcal{F}^0_t),\hat{u}^0(t,\hat{X}^0_t))dt+\sigma_0 dW^0_t, \quad \hat{X}^{0}_0=x^0_0,\\
d\hat{X}^i_t=b(t,\hat{X}^i_t,\mathcal{L}(\hat{X}^i_t|\mathcal{F}^0_t),\hat{X}^0_t,\hat{u}(t,\hat{X}^i_t))dt+\sigma dW^i_t, \quad \hat{X}^i_0=x_0, \quad i \geq 1,\\
d\tilde{X}^{0}_t=b_0(t,\tilde{X}^0_t,\mathcal{L}(\tilde{X}^i_t\vert \mathcal{F}^0_t),v^0_t)dt+\sigma_0 dW^0_t,\quad \tilde{X}^0_0=x^0_0,\\
d\tilde{X}^{i}_t=b(t,\tilde{X}^i_t,\mathcal{L}(\tilde{X}^i_t\vert \mathcal{F}^0_t),\tilde{X}^0_t,\hat{u}(t,\hat{X}^i_t))dt+\sigma dW^i_t, \quad \tilde{X}^i_0=x_0, \quad i\geq 1,
\end{cases}
\end{equation}
and now Theorem \ref{tpropagation} yields the existence of a constant $\tilde{c}$ such that
$$\mathbb{E}\left[\sup_{0 \leq t \leq T}\vert \tilde{X}^{i,N}_t-\tilde{X}^i_t\vert^2\right] \leq \tilde{c}N^{-2/(d+4)},$$
where $\tilde{c}$ depends upon $T$, the Lipschitz constants of $b_0$, $b$, $\hat{u}^0$, $u$, $\hat{\eta}$ and
$$\tilde{\eta}=\mathbb{E}\int^T_0 \vert \tilde{X}^1_t\vert^{d+5} dt.$$
It is important to note that $\tilde{\eta}$ depends on the control $v^0$. On the other hand the coefficients $b_0$ and $b$ are globally Lipschitz-continuous, so by usual estimates and Gronwall's inequality, for all $\kappa >0$ there exists a constant $\rho_1(\kappa)$ such that
$$\mathbb{E}\int^T_0 \vert v^0_t\vert^{d+5} dt \leq \kappa \Longrightarrow \tilde{\eta} \leq \rho^0_1(\kappa).$$
It is then clear that for all $\kappa>0$ there exists a constant $\rho_2(\kappa)$ such that
$$\mathbb{E}\int^T_0 \vert v^0_t\vert^{d+5} dt \leq \kappa \Longrightarrow \tilde{c} \leq \rho^0_2(\kappa).$$
By using the same estimates as in (\ref{fo:hatcost}), we deduce that there exists a constant $\rho(\kappa)$ such that for all $v^0 \in \mathcal{A}^\kappa_0$, we have
\begin{equation}\label{fo:tildeJ}
\vert\tilde{J}^{0,N}-\tilde{J}^0\vert \leq \rho(\kappa)\epsilon_N N^{-1/(d+4)}.
\end{equation}
Finally, since $(\hat{u}^0(t,\hat{X}^0_t),\hat{u}(t,\hat{X}_t))$ solves the limiting two-player game problem, it is clear that
\begin{equation}\label{fo:limitJ}
\hat{J}^0 \leq \tilde{J}^0,
\end{equation}
and combining (\ref{fo:hatJ}), (\ref{fo:tildeJ}) and (\ref{fo:limitJ}) we get the desired result for the major player.

We then consider the case when a minor player changes his strategy unilaterally, and without loss of generality we consider the case when the minor player with index 1 changes his strategy to $v \in \mathcal{A}_1$. This part of the proof is highly similar with that of Theorem 3 in \cite{CarmonaDelarue_sicon}, and we will refer to \cite{CarmonaDelarue_sicon} for some details of the proof in the following. The resulting perturbed controlled dynamics are given by
$$\begin{cases}
d\bar{X}^{0,N}_t=b_0(t,\bar{X}^{0,N}_t,\bar{\mu}^N_t,\hat{u}^0(t,\hat{X}^{0,N}_t))dt+\sigma_0 dW^0_t, \quad \bar{X}^{0,N}_0=x^0_0,\\
d\bar{X}^{1,N}_t=b(t,\bar{X}^{1,N}_t,\bar{\mu}^N_t,\bar{X}^{0,N}_t,v_t)dt+\sigma dW^1_t, \quad \bar{X}^{1,N}_0=x_0,\\
d\bar{X}^{i,N}_t=b(t,\bar{X}^{i,N}_t,\bar{\mu}^N_t,\bar{X}^{0,N}_t,\hat{u}(t,\hat{X}^{i,N}_t))dt+\sigma dW^i_t, \quad \bar{X}^{i,N}_0=x_0, \quad 2 \leq i \leq N.
\end{cases}$$
By the usual estimates on the difference between $\bar{X}^{i,N}$ and $\hat{X}^{i,N}$, and by applying Gronwall's inequality we can show that
\begin{equation}
\mathbb{E}\left[\sup_{0 \leq t \leq T} \vert \bar{X}^{0,N}_t-\hat{X}^{0,N}_t\vert^2\right]+\frac{1}{N}\sum^N_{i=1} \mathbb{E}\left[\sup_{0 \leq t \leq T} \vert \bar{X}^{i,N}_t-\hat{X}^{i,N}_t\vert^2\right]
\leq  \frac{c}{N}\int^T_0 \vert v_t - \hat{u}(t,\hat{X}^{1,N}_t)\vert^2 dt.
\end{equation}
Combining the above bound, the growth properties of $\hat{u}$ and (\ref{fo:hatcon}), we see that for all $\kappa >0$, there exists a non-decreasing function $\rho_1: \mathbb{R}^+ \rightarrow \mathbb{R}^+$ such that
$$\int^T_0 \vert v_t\vert^2 \leq \kappa \quad \Rightarrow \quad \mathbb{E}\left[ \sup_{0 \leq t \leq T} \vert \bar{X}^{0,N}_t -\hat{X}^0_t\vert^2\right] +\mathbb{E}\left[ \sup_{0 \leq t \leq T}W^2_2(\bar{\mu}_t,\mu_t)\right]\leq \rho_1(\kappa)N^{-2/(d+4)}.$$
We hence conclude that there exists a non-decreasing function $\rho_2: \mathbb{R}^+\rightarrow \mathbb{R}^+$ such that when $\int^T_0 \vert v_t\vert^2 \leq \kappa$, we have
$$\mathbb{E}\left[\sup_{0 \leq t \leq T} \vert \bar{X}^{1,N}_t-\bar{X}^1_t\vert^2\right] \leq \rho_2(\kappa) N^{-2/(d+4)},$$
where $\bar{X}^1$ is the solution of the SDE
\begin{equation}
d\bar{X}^1_t=b(t,\bar{X}^1_t,\mu_t,X^0_t,v_t)dt+\sigma dW^1_t, \quad \bar{X}^1_0=x_0,
\end{equation}
where $\mu$ and $X^0$ are in the solution of the FBSDE (\ref{fo:Ultimate}). We then conclude in the same way as for the major player.
\end{proof}

\section{MFG with Major-Minor Agents: the LQG Case}
\label{se:lqg}
The linear-quadratic-gaussian (LQG) stochastic control problems are among the best-understood models in stochastic control theory. It is thus natural to expect explicit results for the major-minor mean field games in a similar setting.  This type of model was first treated in \cite{Huang} in infinite horizon. The finite-horizon case was treated in \cite{NguyenHuang1}. However, the state of the major player does not enter the dynamics of the states of the minor players in \cite{NguyenHuang1}. The general finite-horizon case is solved in \cite{NguyenHuang2} by the use of the so-called \emph{nonanticipative variational calculus}. It is important to point out that the notion of Nash equilibrium used in \cite{NguyenHuang2} corresponds to the \emph{Markovian feedback Nash equilibrium'} while here, we work with open-loop Nash equilibriums. In what follows, we carry out the general systematic scheme introduced in the previous discussions and derive approximate Nash equilibria for the LQG major-minor mean field games. 

The dynamics of the states of the players are given by the following linear SDEs:
$$
\begin{cases}
dX^{0,N}_t=(A_0 X^{0,N}_t+B_0 u^{0,N}_t+F_0 \bar{X}^N_t)dt+D_0 dW^0_t,\\
dX^{i,N}_t=(AX^{i,N}_t+Bu^{i,N}_t+F\bar{X}^N_t+GX^0_t)dt+DdW^i_t.
\end{cases}
$$
For the sake of presentation we introduce the linear transformations $\Phi$ and $\Psi$ defined by:
$$
\Phi(X)=H_0 X+\eta_0, 
\quad\text{ and }\quad
\Psi(X,Y)=H X+\hat{H}Y+\eta.
$$
The cost functionals for the major and minor players are given by
$$J^0(u)=\mathbb{E}\left[\int^T_0\left\{(X^0_t-\Phi(\bar{X}^N_t))^\dagger Q_0 (X^0_t-\Phi(\bar{X}^N_t))+u^{0\dagger}_t R_0 u^0_t\right\}dt\right],$$
$$J^{i,N}(u)=\mathbb{E}\left[\int^T_0 \left\{(X^{i,N}_t-\Psi(X^0_t,\bar{X}^N_t))^\dagger Q (X^{i,N}_t-\Psi(X^0_t,\bar{X}^N_t))+u^{i,N\dagger}_t R u^{i,N}_t\right\}dt\right],$$
in which $Q$, $Q_0$, $R$ and $R_0$ are symmetric matrices and $R$ and $R_0$ are assumed to be positive definite. We use the notation $a^\dagger$ for the transpose of $a$.

We check readily that all previously mentioned assumptions hold in the above LQG setting. We then arrive directly at the non-Markovian conditional McKean-Vlasov FBSDE (\ref{fo:Ultimate}) which writes (note Remark \ref{rk:redundant})
\begin{equation}\label{fo:UltimateLQG}
\begin{cases}
dX^0_t=(A_0 X^0_t-\frac{1}{2}B_0 R^{-1}_0 B^\dagger_0 \mathbb{E}[P^0_t\vert \mathcal{F}^0_t]+F_0 \mathbb{E}[X_t \vert \mathcal{F}^0_t])dt+D_0 dW^0_t,\\
dX_t=(AX_t-\frac{1}{2}BR^{-1}B^\dagger Y_t+F\mathbb{E}[X_t\vert \mathcal{F}^0_t]+GX^0_t)dt+DdW_t,\\
dP^0_t=(-A^\dagger_0 P^0_t-G^\dagger P_t-2Q_0(X^0_t-\Phi(\mathbb{E}[X_t \vert \mathcal{F}^0_t])))dt+Q^{00}_t dW^0_t+Q^{01}_t dW_t,\\
\begin{aligned}
dP_t=&-A^\dagger P_t+Q^{10}_t dW^0_t+Q^{11}_t dW_t\\
&-F^\dagger_0\mathbb{E}[P^0_t\vert \mathcal{F}^0_t]dt-F^\dagger\mathbb{E}[ P_t\vert \mathcal{F}^0_t]dt-2 H_0^\dagger Q_0(X^0_t-\Phi(\mathbb{E}[X_t\vert \mathcal{F}^0_t]))dt,
\end{aligned}\\
dY_t=(-A^\dagger Y_t-2Q(X_t-\Psi(X^0_t,\mathbb{E}[X_t \vert \mathcal{F}^0_t])))dt+Z^0_t dW^0_t+Z_t dW_t,
\end{cases}
\end{equation}
with the initial and terminal conditions given by
$$X^0_0=x^0_0, \text{ } X_0=x_0, \text{ } P^0_T=P_T=Y_T=0.$$
As already explained at the end of Section 3, the solvability of general conditional McKean-Vlasov FBSDEs is a difficult problem. However, due to the special linear structure of (\ref{fo:UltimateLQG}) we can go a step further and look for more explicit sufficient conditions of well-posedness. As before, we use a bar to denote the conditional expectation with respect to $\cF^0_t$, so we arrive at the following more compact form:
\begin{equation}\label{fo:UltimateLQG2}
\begin{cases}
dX^0_t=(A_0 X^0_t-\frac{1}{2}B_0 R^{-1}_0 B^\dagger_0 \bar{P}^0_t+F_0 \bar{X}_t)dt+D_0 dW^0_t,\\
dX_t=(AX_t-\frac{1}{2}BR^{-1}B^\dagger Y_t+F\bar{X}_t+GX^0_t)dt+DdW_t,\\
dP^0_t=(-A^\dagger_0 P^0_t-G^\dagger P_t-2Q_0 X^0_t+2Q_0H_0\bar{X}_t+2Q_0\eta_0)dt+Q^{00}_t dW^0_t+Q^{01}_t dW_t,\\
\begin{aligned}
dP_t=&-A^\dagger P_t+Q^{10}_t dW^0_t+Q^{11}_t dW_t\\
&-F^\dagger_0 \bar{P}^0_t dt-F^\dagger\bar{P}_t dt-(2H^\dagger_0 Q_0 X^0_t-2 H^\dagger_0 Q_0 H_0 \bar{X}_t -2 H^\dagger_0 Q_0 \eta_0)dt
\end{aligned}\\
dY_t=(-A^\dagger Y_t-2QX_t+2QHX^0_t+2Q\hat{H}\bar{X}_t+2Q\eta)dt+Z^0_t dW^0_t+Z_t dW_t,
\end{cases}
\end{equation}

We then condition all the equations by the filtration $\mathcal{F}^0_t$. The following lemma will be useful when we deal with the Ito stochastic integral terms.
\begin{lemma}
Let $\mathcal{F}_t$ be a filtration and $B$ a $\mathcal{F}_t$-Brownian motion. Let $H$ be a $\mathcal{F}_t$-progressively measurable process, then
$$\mathbb{E}\left[\int^T_0 H_t dB_t\vert \mathcal{F}_T\right]=\int^T_0 \mathbb{E}\left[ H_t \vert \mathcal{F}_t\right]dB_t.$$
\end{lemma}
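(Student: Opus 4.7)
The natural reading of this lemma requires that $H$ be progressively measurable with respect to a larger filtration $\mathcal{G}_t \supseteq \mathcal{F}_t$ for which $B$ remains a Brownian motion; otherwise the identity is tautological. I will proceed under this interpretation, which matches the paper's intended application where $\mathcal{F}_t = \mathcal{F}^0_t$, $B = W^0$, and the integrands $Q^{00}, Q^{10}, \ldots$ live on the full filtration generated by $(W^0, W^1, \ldots)$. Set $\bar{H}_t := \mathbb{E}[H_t \mid \mathcal{F}_t]$ and $\Delta := \int_0^T (H_t - \bar{H}_t)\,dB_t$. Both stochastic integrals lie in $L^2$ (the second one by Jensen's inequality), and the right-hand side of the claim is $\mathcal{F}_T$-measurable by construction. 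Hence the asserted identity is equivalent to $\mathbb{E}[\Delta Z] = 0$ for every $Z \in L^2(\Omega, \mathcal{F}_T)$.

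To verify this, I would invoke the It\^o martingale representation theorem on $\mathcal{F}_t$, which under the natural hypothesis that $\mathcal{F}_t$ is the completed natural filtration of $B$ yields $Z = \mathbb{E}[Z] + \int_0^T \phi_s\,dB_s$ for some $\mathcal{F}_t$-progressive $\phi \in L^2([0,T] \times \Omega)$. Using that $B$ is also a Brownian motion for $\mathcal{G}_t$ and that both $H - \bar{H}$ and $\phi$ are square-integrable $\mathcal{G}_t$-progressive processes, the It\^o isometry applied in the larger filtration gives
$$\mathbb{E}[\Delta Z] \;=\; \mathbb{E}[\Delta]\,\mathbb{E}[Z] \;+\; \mathbb{E}\!\left[\int_0^T \phi_s\,(H_s - \bar{H}_s)\,ds\right] \;=\; \mathbb{E}\!\left[\int_0^T \phi_s\,(H_s - \bar{H}_s)\,ds\right],$$
the first term vanishing because $\Delta$ is the terminal value of a centered $\mathcal{G}_t$-martingale. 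Now the tower property closes the argument: since $\phi_s$ is $\mathcal{F}_s$-measurable, $\mathbb{E}[\phi_s(H_s - \bar{H}_s)] = \mathbb{E}[\phi_s\,\mathbb{E}[H_s - \bar{H}_s \mid \mathcal{F}_s]] = 0$, and Fubini delivers $\mathbb{E}[\Delta Z] = 0$.

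The only delicate point is the invocation of martingale representation, which requires $\mathcal{F}_t$ to coincide up to null sets with the augmented natural filtration of $B$. This is automatic in the application that follows, where $\mathcal{F}_t = \mathcal{F}^0_t$ is precisely the filtration generated by $W^0 = B$. If one prefers a more elementary route, one can first establish the identity for simple $\mathcal{G}_t$-adapted processes $H = \sum_i \xi_i \mathbf{1}_{(t_i, t_{i+1}]}$ with $\xi_i \in L^2(\mathcal{G}_{t_i})$, and extend by the $L^2$-continuity in $H$ of both sides of the claim. However the reduction to simple processes still rests, at each partition step, on the same isometry-and-tower computation used above, so I expect the representation-based proof to be the cleanest packaging.
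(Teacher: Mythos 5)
The paper states this lemma without any proof (it is asserted and immediately used to condition the LQG system on $\mathcal{F}^0_t$), so there is no argument of the authors' to compare yours against; judged on its own terms, your proof is correct. You are also right that the statement is vacuous as literally written and must be read with $H$ progressively measurable for a larger filtration $\mathcal{G}_t \supseteq \mathcal{F}_t$ under which $B$ remains a Brownian motion; that is exactly the situation in which the lemma is invoked, with $\mathcal{F}_t=\mathcal{F}^0_t$, $B=W^0$, and integrands adapted to the full filtration. The duality argument --- testing $\Delta=\int_0^T (H_t-\bar H_t)\,dB_t$ against arbitrary $Z\in L^2(\mathcal{F}_T)$, representing $Z$ as $\mathbb{E}[Z]+\int_0^T\phi_s\,dB_s$, and killing the cross term via the polarized It\^o isometry in $\mathcal{G}$ followed by the tower property at each fixed time $s$ --- is sound, and your two caveats (martingale representation requires $\mathcal{F}_t$ to be the augmented filtration of $B$, which holds here since $\mathcal{F}^0_t$ is generated by $W^0$; the simple-process route is no shorter because on each block the projected integrand $\mathbb{E}[\xi\,\vert\,\mathcal{F}_t]$ is genuinely time-dependent rather than frozen at the left endpoint) are both apposite. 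Two details are left implicit but are harmless: one must take a progressively measurable (optional-projection) version of $\bar H_t=\mathbb{E}[H_t\,\vert\,\mathcal{F}_t]$ for the right-hand integral to be defined, and in the paper's application $W^0$ is $m_0$-dimensional and the integrands matrix-valued, so the isometry and the final tower-property step should be read componentwise.
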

We then use this lemma to derive the SDEs satisfied by the conditional versions of the above processes. We add a bar on the various processes to denote the conditional version, and since $X^0_t$ is already $\mathcal{F}^0_t$-adapted, its notation will stay unchanged.
\begin{equation}\label{fo:UltimateLQGbar}
\begin{cases}
dX^0_t=(A_0 X^0_t-\frac{1}{2}B_0 R^{-1}_0 B^\dagger_0 \bar{P}^0_t+F_0 \bar{X}_t)dt+D_0 dW^0_t,\\
d\bar{X}_t=(A\bar{X}_t-\frac{1}{2}BR^{-1}B^\dagger \bar{Y}_t+F \bar{X}_t+GX^0_t)dt,\\
d\bar{P}^0_t=(-A^\dagger_0 \bar{P}^0_t-G^\dagger \bar{P}_t-2Q_0 X^0_t+2Q_0 H_0 \bar{X}_t+2Q_0 \eta_0)dt+\bar{Q}^{00}_t dW^0_t,\\
\begin{aligned}
d\bar{P}_t=&-A^\dagger \bar{P}_t+\bar{Q}^{10}_t dW^0_t\\
&-F^\dagger_0 \bar{P}^0_t-F^\dagger \bar{P}_t-(2H^\dagger_0 Q_0 X^0_t-2H^\dagger_0 Q_0 H_0 \bar{X}_t-2H^\dagger_0 Q_0 \eta_0)dt
\end{aligned}\\
d\bar{Y}_t=(-A^\dagger \bar{Y}_t-2Q \bar{X}_t+2QHX^0_t+2Q\hat{H}\bar{X}_t+2Q\eta)dt+\bar{Z}^0_t dW^0_t.\\
\end{cases}
\end{equation}
If we use $\mathbf{X}$ to denote $(X^0,\bar{X})$ and $\mathbf{Y}$ for $(\bar{P}^0,\bar{P},\bar{Y})$, we can write the above FBSDE in the following standard form
\begin{equation}\label{fo:compact}
\begin{cases}
d\mathbf{X}_t=(\mathbb{A}\mathbf{X}_t+\mathbb{B}\mathbf{Y}_t+\mathbb{C})dt+\mathbb{D}dW^0_t,\\
d\mathbf{Y}_t=-(\hat{\mathbb{A}}\mathbf{X}_t+\hat{\mathbb{B}}\mathbf{Y}_t+\hat{\mathbb{C}})dt+ \mathbf{Z}_tdW^0_t,
\end{cases}
\end{equation}
with initial and terminal conditions given by
$$\mathbf{X}_0=\begin{pmatrix}
x^0_0\\
x_0
\end{pmatrix}, \text{ } \mathbf{Y}_T=\begin{pmatrix}
0\\
0\\
0
\end{pmatrix},$$
in which
$$\mathbb{A}=\begin{pmatrix}
A_0 & F_0\\
G & A+F
\end{pmatrix},
\mathbb{B}=\begin{pmatrix}
-\frac{1}{2}B_0 R^{-1}_0 B^\dagger_0 & 0 & 0\\
0 & 0 & -\frac{1}{2}BR^{-1}B^\dagger
\end{pmatrix},
\mathbb{D}=\begin{pmatrix}
D_0\\
0
\end{pmatrix},$$
$$\hat{\mathbb{A}}=\begin{pmatrix}
2Q_0 & -2Q_0 H_0 \\
2H^\dagger_0 Q_0 & -2H^\dagger_0 Q_0 H_0 \\
-2QH & 2Q-2Q\hat{H}
\end{pmatrix},
\hat{\mathbb{B}}=\begin{pmatrix}
A^\dagger_0 & G^\dagger & 0\\
F^\dagger_0 & A^\dagger+F^\dagger & 0 \\
0 &0 & A^\dagger
\end{pmatrix}.$$

In order to find explicit sufficient conditions of the well-posedness of the linear FBSDE (\ref{fo:compact}) we follow the usual four step scheme and look for solutions in the form $\mathbf{Y}_t=S_t \mathbf{X}_t+s_t$, where $S$ and $s$ are two deterministic functions defined on $[0,T]$. Consider the following matrix Riccati equation with terminal condition:
\begin{equation}\label{fo:Riccati}
\dot{S}_t+S_t \mathbb{A}+\hat{\mathbb{B}}S_t+S_t\mathbb{B}S_t+\hat{\mathbb{A}}=0, \text{ } S_T=0,
\end{equation}
and the linear ODE
\begin{equation}\label{fo:ODE}
\dot{s}_t=-(\hat{\mathbb{B}}+S_t \mathbb{B})s_t-(\hat{\mathbb{C}}+S_t \mathbb{C}), \text{ } s_T=0.
\end{equation}
We observe that, when $S$ is well-defined, the backward ODE (\ref{fo:ODE}) is always uniquely solvable. We have the following proposition.

\begin{proposition}
If the matrix Riccati equation (\ref{fo:Riccati}) and the backward ODE (\ref{fo:ODE}) admit solutions denoted by
$$S_t=\begin{pmatrix}
S^{1,1}_t & S^{1,2}_t\\
S^{2,1}_t & S^{2,2}_t\\
S^{3,1}_t & S^{3,2}_t
\end{pmatrix}, \text{ }
s_t=\begin{pmatrix}
s^1_t \\
s^2_t \\
s^3_t
\end{pmatrix},$$
 then the FBSDE (\ref{fo:UltimateLQGbar}) is uniquely solvable. The first two components in the solution, namely $(X^0,\bar{X}^0)$, is given by the solution of the linear SDE
$$\begin{cases}
d\bar{X}^0_t=(A_0 \bar{X}^0_t-\frac{1}{2}B_0 R^{-1}_0 B^\dagger_0 (S^{1,1}_t X^0_t+S^{1,2}_t \bar{X}_t+s^2_t)+F_0 \bar{X}_t)dt+D_0 dW^0_t,\\
d\bar{X}_t=(A\bar{X}_t-\frac{1}{2}BR^{-1}B^\dagger (S^{3,1}_t X^0_t+S^{3,2}_t \bar{X}_t+s^3_t)+F\bar{X}_t+G\bar{X}^0_t)dt,\\
\end{cases}$$
with  initial conditions given by
$$X^0_0=x^0_0, \text{ }, \bar{X}_0=x_0.$$
The processes $(\bar{P}^0,\bar{P}, \bar{Y})$ are given by
$$\text{ }\bar{P}^0_t=S^{1,1}_t X^0_t+S^{1,2}_t \bar{X}_t + s^1_t,\text{ }\bar{P}_t=S^{2,1}_t X^0_t+S^{2,2}_t \bar{X}_t + s^2_t,\bar{Y}_t=S^{3,1}_t X^0_t+S^{3,2}_t \bar{X}_t + s^3_t.$$
\end{proposition}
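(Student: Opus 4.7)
The plan is to verify the decoupling ansatz $\mathbf{Y}_t = S_t \mathbf{X}_t + s_t$ directly, derive the forward dynamics from it, and then argue uniqueness. Since system (\ref{fo:compact}) is driven only by $W^0$ (all $dW$ terms were absorbed in the conditioning step) and since $S$, $s$ are deterministic, Itô's formula collapses to the ordinary chain rule:
\[
d(S_t\mathbf{X}_t + s_t) = \bigl(\dot S_t \mathbf{X}_t + S_t(\mathbb{A}\mathbf{X}_t + \mathbb{B}\mathbf{Y}_t + \mathbb{C}) + \dot s_t\bigr)\,dt + S_t \mathbb{D}\,dW^0_t.
\]
Substituting $\mathbf{Y}_t = S_t\mathbf{X}_t + s_t$ into the right-hand side and identifying the result with the backward equation of (\ref{fo:compact}), the coefficient of $\mathbf{X}_t$ in the drift yields the matrix Riccati equation (\ref{fo:Riccati}), the $\mathbf{X}$-free part of the drift yields the linear ODE (\ref{fo:ODE}), and the diffusion coefficient gives $\bar{\mathbf{Z}}_t = S_t \mathbb{D}$. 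The terminal condition $\mathbf{Y}_T = 0$ matches exactly $S_T = 0$ and $s_T = 0$.

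Given that (\ref{fo:Riccati}) and (\ref{fo:ODE}) are solvable on $[0,T]$, substitution of $\mathbf{Y}_t = S_t\mathbf{X}_t + s_t$ into the forward equation of (\ref{fo:compact}) produces a linear SDE for $\mathbf{X}_t = (X^0_t, \bar X_t)$ whose time-dependent coefficients are continuous on $[0,T]$, hence bounded. Reading off block by block, using the stated partition of $S$ into the $S^{i,j}$ and the explicit block structure of $\mathbb{A}$, $\mathbb{B}$, $\mathbb{D}$, one recovers precisely the linear SDE system displayed in the statement of the proposition. Existence and strong uniqueness for this system is a textbook fact for linear SDEs with continuous coefficients on a compact interval, and the formulas for $\bar P^0_t$, $\bar P_t$, $\bar Y_t$ follow block by block from the ansatz.

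For uniqueness of the FBSDE itself, suppose $(\mathbf{X}', \mathbf{Y}', \bar{\mathbf{Z}}')$ is any adapted square-integrable solution of (\ref{fo:compact}) with the prescribed boundary data. Setting $\Delta_t := \mathbf{Y}'_t - S_t \mathbf{X}'_t - s_t$, Itô's formula combined with (\ref{fo:Riccati}) and (\ref{fo:ODE}) shows that $\Delta$ solves a linear BSDE driven by $W^0$ with zero terminal condition in which the $\mathbf{X}'$-linear and constant parts of the drift cancel by construction of $S$ and $s$; only terms linear in $(\Delta_t, \bar{\mathbf{Z}}'_t - S_t \mathbb{D})$ remain. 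A standard a priori estimate for linear BSDEs yields $\Delta \equiv 0$ and $\bar{\mathbf{Z}}' = S \mathbb{D}$, after which the forward SDE for $\mathbf{X}'$ decouples and coincides with the one constructed above. The main obstacle here is not conceptual but algebraic: one must keep careful track of the block decomposition of $S$ against the block structures of $\mathbb{A}, \mathbb{B}, \hat{\mathbb{A}}, \hat{\mathbb{B}}$ to confirm that matching the drift coefficients indeed produces (\ref{fo:Riccati}) in exactly the displayed form rather than a transposed variant. The genuinely delicate question, namely solvability of the Riccati equation (\ref{fo:Riccati}) over the whole interval $[0,T]$, is not addressed here because it is explicitly part of the hypothesis.
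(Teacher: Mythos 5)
Your proposal is correct and is exactly the "pure verification procedure" the paper invokes (the paper gives no further detail): substitute the affine ansatz, match the Riccati equation, the linear ODE and the terminal conditions, and run the standard linear-BSDE argument on $\Delta_t=\mathbf{Y}'_t-S_t\mathbf{X}'_t-s_t$ for uniqueness. Your write-up is in fact more complete than the paper's, and the only discrepancy with the displayed forward SDE (the $s^2_t$ that should read $s^1_t$ in the equation for $\bar X^0$) is a typo in the paper, not an error in your argument.
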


\begin{proof}
The proof is a pure verification procedure.
\end{proof}
We now turn to the original conditional FBSDE (\ref{fo:UltimateLQG2}). Now that $X^0$, $\bar{X}_t$, $\bar{P}^0$ and $\bar{P}$ are found, we plug them into the FBSDE and it becomes a standard linear FBSDE with random coefficients. By using the fact that $X^0$, $\bar{X}_t$, $\bar{P}^0$ and $\bar{P}$ are actually solutions of linear SDEs with deterministic coefficients, we have the following proposition.

\begin{proposition}
The FBSDE (\ref{fo:UltimateLQG2}) has a unique solution. Moreover, there exist a deterministic function $K$ and a $\mathcal{F}^0_t$-progressively measurable process $k$ such that
\begin{equation}\label{fo:YX}
Y_t=K_t X_t+k_t.
\end{equation}
\end{proposition}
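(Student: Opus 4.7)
The strategy is to leverage the previous proposition, which already gives us the processes $X^0_t$, $\bar{X}_t$, $\bar{P}^0_t$, $\bar{P}_t$, $\bar{Y}_t$ solving the conditional system \eqref{fo:UltimateLQGbar}. Plugging these into the drifts of \eqref{fo:UltimateLQG2}, the equations for $(X_t, Y_t)$ with martingale integrands $(Z^0_t, Z_t)$ decouple from the rest and form a \emph{linear} FBSDE whose coefficients are $\mathcal{F}^0_t$-progressively measurable (through $X^0_t$ and $\bar{X}_t$) but whose structure in $(X_t, Y_t)$ is affine with deterministic matrix coefficients $A$, $B$, $R$, $Q$, $H$, etc. I will then apply a variant of the four-step scheme, looking for a solution in the affine feedback form $Y_t = K_t X_t + k_t$ where $K_t$ is deterministic and $k_t$ is $\mathcal{F}^0_t$-adapted, and in parallel I will obtain the martingale integrands and close the forward equation.

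First, I substitute $Y_t = K_t X_t + k_t$ into the backward equation and compute $dY_t$ via Itô's formula using the forward dynamics of $X_t$. Matching the coefficient of $X_t$ in the drift yields the symmetric matrix Riccati equation
\begin{equation*}
\dot{K}_t + K_t A + A^\dagger K_t - \tfrac{1}{2} K_t B R^{-1} B^\dagger K_t + 2 Q = 0, \qquad K_T = 0,
\end{equation*}
which is a classical LQ Riccati equation, uniquely solvable on $[0,T]$ under the standard LQG hypotheses (in particular $R > 0$ and $Q \ge 0$). Matching the Brownian martingale terms forces $Z_t = K_t D$ and, since $k_t$ is $\mathcal{F}^0_t$-adapted, it can only carry a $dW^0_t$ martingale part, which will turn out to be $Z^0_t$.

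Next, collecting all remaining affine terms in $X^0_t$, $\bar X_t$, $k_t$ gives the backward dynamics of $k_t$:
\begin{equation*}
dk_t = -\bigl[\bigl(A^\dagger - \tfrac{1}{2} K_t B R^{-1} B^\dagger\bigr) k_t + (K_t F - 2 Q \hat H) \bar X_t + (K_t G - 2 Q H) X^0_t - 2 Q \eta\bigr] dt + Z^0_t dW^0_t,
\end{equation*}
with $k_T = 0$. This is a linear BSDE with bounded deterministic coefficients in $k_t$ and $\mathcal{F}^0_t$-adapted, square-integrable forcing terms (courtesy of the previous proposition), so it admits a unique $\mathcal{F}^0$-adapted square-integrable solution $(k_t, Z^0_t)$. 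Inserting $Y_t = K_t X_t + k_t$ back into the forward SDE for $X_t$ yields a linear SDE in $X_t$ with $\mathcal{F}^0_t$-adapted, Lipschitz coefficients and the Brownian noise $D dW_t$, whose unique strong solution provides $X_t$; this completes existence and uniqueness.

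The main obstacle is ensuring that the symmetric Riccati equation for $K_t$ has a global solution on $[0,T]$; I will invoke the classical LQ theory for this, essentially exploiting the positive semi-definiteness of $Q$ and positive definiteness of $R$ from the LQG setup. A secondary subtlety is verifying that the ansatz $k_t$ is indeed $\mathcal{F}^0$-adapted: this requires checking that the forcing processes $X^0_t$ and $\bar X_t$ in the BSDE for $k_t$ are $\mathcal{F}^0$-adapted, which follows immediately from the explicit representation in the previous proposition. Uniqueness of the whole system then follows because any two solutions produce, by the above computation, the same $K_t$ and $k_t$, hence the same $Y_t$, and then the same $X_t$ by uniqueness of the driving linear SDE.
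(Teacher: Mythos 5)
Your proof follows essentially the same route as the paper: plug the already-known conditional processes $X^0$, $\bar X$, $\bar P^0$, $\bar P$, $\bar Y$ from the preceding proposition into (\ref{fo:UltimateLQG2}), observe that the equations for $(X,Y)$ become a linear FBSDE whose coefficients are $\mathcal{F}^0_t$-adapted but whose structure in $(X,Y)$ is affine with deterministic matrices, and decouple it via the ansatz $Y_t=K_tX_t+k_t$; your Riccati equation for $K_t$, the identification $Z_t=K_tD$, and the linear BSDE for $(k_t,Z^0_t)$ are correct and in fact spell out what the paper dispatches with a one-line appeal to standard LQ theory (global solvability of the Riccati equation does implicitly use $Q\ge 0$, which is needed anyway for the convexity assumption (C) to hold). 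The only point you omit is that the proposition also claims unique solvability of the $P^0$ and $P$ components of (\ref{fo:UltimateLQG2}): once $X^0$, $\bar X$, $\bar P^0$, $\bar P$ and $X$ are known, the equation for $P$ and then the equation for $P^0$ are standard linear BSDEs with Lipschitz drivers and square-integrable data, which is exactly how the paper concludes; adding that one sentence would make your argument cover the full statement.
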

\begin{proof}
We plug $X^0$, $\bar{X}$, $\bar{Y}$, $\bar{P}^0$ and $\bar{P}$ into (\ref{fo:UltimateLQG2}), and we readily observe that the second and the last equations form a standard FBSDE with random coefficients. The structure of this FBSDE is standard in the sense that it can be derived from an stochastic optimal control problem, which yields (\ref{fo:YX}). We now plug all the known processes into the third and the fourth equations in (\ref{fo:UltimateLQG2}), which yields a standard BSDE whose well-posedness is well known. The processes $P^0$ and $P$ thus follow.
\end{proof}

It becomes apparent that the solvability of the Riccati equation (\ref{fo:Riccati}) plays an instrumental role in the study of the unique solvability of (\ref{fo:UltimateLQG}). In order to address this problem we first define the $(2d_0+3d) \times (2d_0+3d)$-matrix $\mathcal{B}$ as
$$\mathcal{B}=\begin{pmatrix}
\mathbb{A} & \mathbb{B}\\
\hat{\mathbb{A}} & \hat{\mathbb{B}}
\end{pmatrix}.$$
We then define $\Psi(t,s)$ as
$$\Psi(s,s)=\exp(\mathcal{B}(t-s)),$$
in other words $\Psi(t,s)$ is the propagator of the matrix ODE $\dot{X}_t=\mathcal{B}X_t$ and satisfies
$$
\frac{d}{dt}\Psi(t,s)=\mathcal{B} \Psi(t,s),
$$
with initial condition $\Psi(s,s)=I_{2d_0+3d}$. 
We further consider the block structure of $\Psi(T,t)$ and write
$$\Psi(T,t)=\begin{pmatrix}
\Gamma^{1,1}_t & \Gamma^{1,2}_t\\
\Gamma^{2,1}_t & \Gamma^{2,2}_t
\end{pmatrix}.$$
We have the following sufficient condition for the unique solvability of (\ref{fo:Riccati}).
\begin{proposition}\label{pRiccati}
If for each $t \in [0,T]$, the $(d_0+2d)\times (d_0+2d)$-matrix $\Gamma^{2,2}_t$ is invertible and the inverse is a continuous function of $t$, then
$$S_t=-\left(\Gamma^{2,2}_t\right)^{-1} \Gamma^{2,1}_t$$
solves the Riccati equation (\ref{fo:Riccati}).
\end{proposition}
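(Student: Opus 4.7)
The plan is to recognize this as a classical result linking the solution of a matrix Riccati equation to the linearization via the associated Hamiltonian flow. The Riccati ansatz $\mathbf{Y}_t = S_t \mathbf{X}_t$ applied to the homogeneous version of the forward-backward linear system \eqref{fo:compact} converts the Hamiltonian flow into an ansatz for $S_t$. Concretely, if $(\mathbf{X},\mathbf{Y})$ solves the purely linear homogeneous system with propagator $\Psi(T,t)$, then
\[
\begin{pmatrix} \mathbf{X}_T \\ \mathbf{Y}_T \end{pmatrix} = \Psi(T,t) \begin{pmatrix} \mathbf{X}_t \\ \mathbf{Y}_t \end{pmatrix},
\]
and the terminal condition $\mathbf{Y}_T=0$ (corresponding to $S_T=0$) reads, in block form, $\Gamma^{2,1}_t \mathbf{X}_t + \Gamma^{2,2}_t \mathbf{Y}_t = 0$, i.e.\ $\mathbf{Y}_t = -(\Gamma^{2,2}_t)^{-1}\Gamma^{2,1}_t\,\mathbf{X}_t$. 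This motivates the definition $S_t := -(\Gamma^{2,2}_t)^{-1}\Gamma^{2,1}_t$, which makes sense precisely under the hypothesis that $\Gamma^{2,2}_t$ is invertible with continuous inverse on $[0,T]$.

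To actually verify the proposition, I would proceed by direct differentiation. Starting from the identity $\tfrac{d}{dt}\Psi(T,t) = -\mathcal{B}\Psi(T,t)$, I would extract the four coupled linear ODEs satisfied by the blocks $\Gamma^{i,j}_t$, then differentiate $S_t = -(\Gamma^{2,2}_t)^{-1}\Gamma^{2,1}_t$ using $\tfrac{d}{dt}(\Gamma^{2,2}_t)^{-1} = -(\Gamma^{2,2}_t)^{-1}\dot{\Gamma}^{2,2}_t (\Gamma^{2,2}_t)^{-1}$. Substituting the block ODEs and using repeatedly the relation $\Gamma^{2,1}_t = -\Gamma^{2,2}_t S_t$ to rewrite the stray factors of $(\Gamma^{2,2}_t)^{-1}\Gamma^{2,1}_t$ as $-S_t$, the quadratic term $S_t \mathbb{B} S_t$ should assemble exactly from the cross terms involving $\Gamma^{1,2}_t(\Gamma^{2,2}_t)^{-1}\Gamma^{2,1}_t$. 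One then reads off
\[
\dot{S}_t + S_t \mathbb{A} + \hat{\mathbb{B}} S_t + S_t \mathbb{B} S_t + \hat{\mathbb{A}} = 0,
\]
which is precisely the Riccati equation \eqref{fo:Riccati}. The terminal condition is automatic: $\Psi(T,T)=I$ gives $\Gamma^{2,1}_T=0$ and $\Gamma^{2,2}_T=I$, hence $S_T=0$.

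The main obstacle is pure bookkeeping: one must carefully track the sign arising from differentiating $\Psi(T,t)$ in the second argument and check that the two quadratic cross terms recombine into $S_t\mathbb{B}S_t$ via the identification $(\Gamma^{2,2}_t)^{-1}\Gamma^{2,1}_t = -S_t$. A secondary point is regularity: continuity of $t\mapsto(\Gamma^{2,2}_t)^{-1}$ together with the real-analytic dependence of $\Psi(T,t)$ on $t$ already gives $S_t\in C^1([0,T])$, so the Riccati identity is meaningful pointwise and no further compactness or fixed-point argument is required. This is why the authors can legitimately describe the proof as a straightforward verification.
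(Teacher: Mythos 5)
Your overall strategy is the right one, and it is the one the authors clearly intend: the paper gives no proof of Proposition \ref{pRiccati} at all (it is stated as a classical fact about linearizing a Riccati equation through the associated Hamiltonian flow), so the direct verification you outline is exactly what is called for, and your remarks on the terminal condition ($\Psi(T,T)=I$ giving $\Gamma^{2,1}_T=0$, $\Gamma^{2,2}_T=I$, hence $S_T=0$) and on regularity are correct. However, two pieces of the ``pure bookkeeping'' you defer are not innocuous, and one of them would actually derail the computation as you have set it up.

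First, to obtain closed ODEs for the second block row you must differentiate $\Psi(T,t)$ using right multiplication, $\tfrac{d}{dt}\Psi(T,t)=-\Psi(T,t)\mathcal{B}$ (numerically equal to $-\mathcal{B}\Psi(T,t)$ since $\mathcal{B}$ is constant, but not equal block by block). This gives $\dot{\Gamma}^{2,1}_t=-\Gamma^{2,1}_t\mathbb{A}-\Gamma^{2,2}_t\hat{\mathbb{A}}$ and $\dot{\Gamma}^{2,2}_t=-\Gamma^{2,1}_t\mathbb{B}-\Gamma^{2,2}_t\hat{\mathbb{B}}$, which involve only $\Gamma^{2,1}_t$ and $\Gamma^{2,2}_t$; the quadratic term then appears as $\bigl((\Gamma^{2,2}_t)^{-1}\Gamma^{2,1}_t\bigr)\mathbb{B}\bigl((\Gamma^{2,2}_t)^{-1}\Gamma^{2,1}_t\bigr)=S_t\mathbb{B}S_t$. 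The block $\Gamma^{1,2}_t$ never enters, so your anticipated cross term $\Gamma^{1,2}_t(\Gamma^{2,2}_t)^{-1}\Gamma^{2,1}_t$ is a sign that you were implicitly using the left-multiplied form, under which the second-row block equations do not close and the argument stalls. Second, if you carry out this computation with $\mathcal{B}$ exactly as defined in the paper, you land on $\dot{S}_t+S_t\mathbb{A}-\hat{\mathbb{B}}S_t+S_t\mathbb{B}S_t-\hat{\mathbb{A}}=0$, i.e.\ the signs of $\hat{\mathbb{A}}$ and $\hat{\mathbb{B}}S_t$ come out opposite to \eqref{fo:Riccati}. The matrix that produces \eqref{fo:Riccati} is the Hamiltonian matrix of the homogeneous characteristic system of \eqref{fo:compact}, namely $\bigl(\begin{smallmatrix}\mathbb{A} & \mathbb{B}\\ -\hat{\mathbb{A}} & -\hat{\mathbb{B}}\end{smallmatrix}\bigr)$, which is consistent with your own motivating paragraph (the backward equation carries a minus sign in its drift) but not with the paper's printed $\mathcal{B}$. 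You should make this correction explicit rather than asserting that the terms ``should assemble exactly'': as written, the assertion is false for the stated $\mathcal{B}$, and the step you wave at is precisely where the discrepancy surfaces.
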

The assumption in Proposition \ref{pRiccati} will be denoted by assumption (A'). The above 3 propositions tell us that if assumption (A') holds, then we can apply Theorem \ref{th:CLT}. Consequently we have
\begin{theorem}\label{tCentralLQG}
Assume that assumption (A') is in force. There exist a sequence $(\epsilon_N)_{N \geq 1}$ and a non-decreasing function $\rho:\mathbb{R}^+ \rightarrow \mathbb{R}^+$such that \\
(i) There exists a constant $c$ such that for all $N \geq 1$,
$$\epsilon_N \leq c N^{-1/(d+4)}.$$
(ii) The partially feedback profile
$$\left(-\frac{1}{2}R^{-1}_0 B^\dagger_0 (S^{1,1}_t X^{0,N}_t+S^{1,2}_t \bar{X}_t+s^1_t), (-\frac{1}{2}R^{-1}B^\dagger(K_t X^{i,N}_t+k_t))_{1 \leq i \leq N}\right)$$
 forms an $\rho(\kappa)\epsilon_N$-Nash equilibrium for the $(N+1)$-player LQG game when the sets of admissible controls are taken as $\mathcal{A}^\kappa_0 \times \prod^N_{i=1}\mathcal{A}^\kappa_i$.
\end{theorem}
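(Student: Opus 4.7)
The plan is to derive Theorem \ref{tCentralLQG} as a corollary of Theorem \ref{th:CLT}. The structural assumptions (A1)--(A5), (M), (M0), (C), (C0) hold trivially in the LQG setting: the drifts are linear and the volatilities constant, the running and terminal costs are strictly convex quadratic with $R_0, R$ positive definite (so the Hamiltonians admit unique affine-in-$u_0$ and $u$ minimizers), and the maps are globally Lipschitz with the required growth. The entire work is therefore concentrated on verifying assumptions (A7) and (A8), and the three propositions preceding Theorem \ref{tCentralLQG} are precisely tailored to this.

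First, under assumption (A'), I would invoke Proposition \ref{pRiccati} to produce $S_t$ on $[0,T]$, then solve the linear backward ODE (\ref{fo:ODE}) for $s_t$. Feeding these into the first proposition of the subsection gives $(X^0,\bar{X})$ as the solution of a linear SDE with deterministic coefficients and hence produces $(\bar{P}^0,\bar{P},\bar{Y})$ as affine functions of $(X^0,\bar{X})$ plus deterministic shifts. The second proposition of the subsection then yields the full solution of (\ref{fo:UltimateLQG2}) together with the identity $Y_t = K_t X_t + k_t$, where $K$ is deterministic and $k$ is $\mathbb{F}^0$-progressively measurable. Setting
\[
\theta(t,\omega,x_0,x) := K_t x + k_t(\omega)
\]
gives a random decoupling field: for each $(t,x_0,x)$ the map $\omega \mapsto \theta$ is $\mathcal{F}^0_t$-measurable, and $\theta$ is globally Lipschitz in $(x_0,x)$ with constant $\sup_{t\in[0,T]}\|K_t\|<\infty$ by continuity of $K$ on the compact $[0,T]$. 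This verifies (A7).

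Next, assumption (A8) follows directly from the explicit form of $\hat{u}^0$. Because $\sigma_0$ is constant, minimizing $H_0$ in $u_0$ gives $\hat{u}^0_t = -\tfrac{1}{2}R_0^{-1}B_0^{\dagger}\bar{P}^0_t = -\tfrac{1}{2}R_0^{-1}B_0^{\dagger}(S^{1,1}_t X^0_t + S^{1,2}_t \bar{X}_t + s^1_t)$, so $\hat{u}^0(t,x_0)$ is affine in $x_0$ with Lipschitz constant $\tfrac{1}{2}\|R_0^{-1}B_0^{\dagger}S^{1,1}_t\|$ bounded uniformly in $t$. Moreover $\hat{u}^0(t,0)$ is square-integrable in $(t,\omega)$ because $\bar{X}$ solves a linear SDE with finite second moments and $s^1$ is deterministic and bounded. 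In exactly the same way $\hat{u}(t,x_0,x) = -\tfrac{1}{2}R^{-1}B^{\dagger}(K_t x + k_t)$ has the Lipschitz and integrability properties needed to feed into the finite-player construction of Theorem \ref{th:CLT}.

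With (A1)--(A5), (M), (M0), (C), (C0), (A7), (A8) all in force, I would apply Theorem \ref{th:CLT} to the closed-loop profile displayed in the statement to obtain the sequence $(\epsilon_N)$ and the function $\rho$; the rate $\epsilon_N \le cN^{-1/(d+4)}$ is precisely the rate produced there via Lemma \ref{le:RR}. The main obstacle in this plan is the verification that the decoupling field $\theta$ has a deterministic Lipschitz constant in the spatial variables — this is not automatic from generic FBSDE theory and is the reason assumption (A') is imposed in the precise form above; it is what forces the affine structure $Y_t = K_t X_t + k_t$ with $K_t$ deterministic and hence makes (A7) and (A8) hold uniformly in $\omega$. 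Once this is secured, the remaining argument is a direct invocation of Theorem \ref{th:CLT}.
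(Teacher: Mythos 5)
Your proposal is correct and follows essentially the same route as the paper, which simply observes that under (A') the three preceding propositions yield the solvability of the FBSDE and the affine decoupling field, so that (A7)--(A8) hold and Theorem \ref{th:CLT} applies. In fact you supply more detail than the paper does in verifying the hypotheses.
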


\section{A Concrete Example}
\label{se:example}
The scheme proposed in this paper differs from the one proposed in \cite{NguyenHuang2,NourianCaines} as the control problem faced by the major player is here of the conditional McKean-Vlasov type, and the measure flow is endogenous to the controller. This makes the limiting problem a \emph{bona fide} two-player game instead of a succession of two consecutive standard optimal control problems. Essentially, this adds another fixed point problem, coming from the Nash equilibrium for the two-player game, on top of the fixed point problem of step 3 of the standard mean field game paradigm. The reader may wonder whether after solving the two fixed point problems of the current scheme, we could end up with the same solution as in the scheme proposed in \cite{NguyenHuang2,NourianCaines}. In order to answer this question, we provide a concrete example, in which we show that the two solutions are different, and the Nash equilibria for finite-player games indeed converge to the solution of the scheme proposed in this paper.

We consider the $(N+1)$-player game whose state dynamics are given by
$$
\begin{cases}
dX^{0,N}_t=(\frac{a}{N}\sum^N_{i=1}X^{i,N}_t+b u^{0,N}_t)dt+ D_0 dW^0_t,\quad X^{0,N}_0=x^0_0,\\
dX^{i,N}_t=c X^{0,N}_t dt+ D dW^i_t, \quad X^{i,N}_0=x_0, \quad i=1,2,...,N,
\end{cases}
$$
the objective function of the major player is given by
$$
J^{0,N}=\mathbb{E}\bigg[\int^T_0  \left(q | X^{0,N}_t|^2+|u^{0,N}_t|^2\right) dt\bigg],
$$
and the objective functions of the minor players are given by
$$
J^{i,N}=\mathbb{E}\bigg[\int^T_0 | u^{i,N}_t|^2 dt\bigg].
$$
All the processes considered in this section one-dimensional. We search for an open loop Nash equilibrium. As we can readily observe, in this finite-player stochastic differential game, the minor players' best responses are always $0$, regardless of other players' control processes. Therefore, the only remaining issue is to determine the major player's best response to the minor players using a zero control. This amounts to solving a stochastic control problem. This minimalist structure of the problem will facilitate the task of differentiating the current scheme from those of \cite{NguyenHuang2,NourianCaines}.

\subsection{Finite-player Game Nash Equilibrium}
We use the stochastic maximum principle. The admissible controls for the major player are the square-integrable $\mathcal{F}^0_t$-progressively measurable processes. His Hamiltonian is given by
$$
H=y_0 (\frac{a}{N}\sum^N_{i=1} x_i +b u_0)+cx_0\sum^N_{i=1} y_i+q x^2_0+u^2_0.
$$
The minimization of the Hamiltonian is straightforward. We get $\hat u_0=-by_0/2$. Applying the \emph{game version} of the Pontryagin stochastic 
maximum principle leads to the FBSDE:
$$
\begin{cases}
dX^{0,N}_t=(\frac{a}{N}\sum^N_{i=1} X^{i,N}_t-\frac{1}{2}b^2 Y^{0,N}_t)dt+D_0 dW^0_t,\\
dX^{i,N}_t=cX^{0,N}_t dt+ D dW^i_t, \quad 1 \leq i \leq N,\\
dY^{0,N}_t=-(c\sum^N_{i=1}Y^{i,N}_t +2q X^{0,N}_t)dt+ \sum^{N}_{j=0} Z^{0,j,N}_t dW^j_t,\\
dY^{i,N}_t=-\frac{a}{N}Y^{0,N}_t dt + \sum^{N}_{j=0} Z^{i,j,N}_t dW^j_t, \quad 1 \leq i \leq N.
\end{cases}
$$
The initial conditions for the state processes are the same as always, and will be omitted systematically in the following. 
The terminal conditions read $Y^{i,N}_T=0$ for $0\le i\le N$.
Keeping in mind the fact that the optimal control identified by the necessary condition of the Pontryagin stochastic maximum principle is 
$\hat u^{0,N}_t=-bY^{0,N}_t/2$ it is clear  that, what matters in the above equations, is the aggregate behavior of the processes 
$(X^{i,N})$ and $(Y^{i,N})$. Accordingly we introduce
$$
X^N_t=\frac{1}{N}\sum^N_{i=1}X^{i,N}_t, \quad Y^N_t=\sum^N_{i=1} Y^{i,N}_t,
$$
and the above FBSDE leads to the system:
$$
\begin{cases}
dX^{0,N}_t=(a X^N_t-\frac{1}{2}b^2 Y^{0,N}_t)dt+ D_0 dW^0_t,\\
d X^N_t=cX^{0,N}_t dt+ \frac{D}{N} d(\sum^N_{i=1}W^i_t),\\
dY^{0,N}_t=-(c  Y^N_t +2q X^{0,N}_t)dt+ \sum^{N}_{j=1} Z^{0,j,N}_t dW^j_t,\\
d Y^N_t=-a Y^{0,N}_t dt+\sum^N_{i=1} \sum^N_{j=0} Z^{i,j,N}_t dW^j_t,
\end{cases}
$$
and by conditioning with respect to $\mathcal{F}^0_t$ for the last two equations we have
$$
\begin{cases}
dX^{0,N}_t=(aX^N_t-\frac{1}{2}b^2 \bar{Y}^{0,N}_t)dt+ D_0 dW^0_t,\\
dX^N_t=cX^{0,N}_t dt+ \frac{D}{N} d(\sum^N_{i=1}W^i_t),\\
d\bar{Y}^{0,N}_t=-(c \bar{Y}^N_t +2q X^{0,N}_t)dt+ Z^{0,0}_t dW^0_t,\\
d\bar{Y}^N_t=-a \bar{Y}^{0,N}_t dt+\sum_{i} Z^{i,0}_t dW^0_t,
\end{cases}
$$
where we used an over line on top of a random variable to denote its conditional expectation with respect to $\mathcal{F}^0_t$.
by following the usual scheme of solving FBSDEs we see that the solvability of the above FBSDE depends on the solvability of
\begin{equation}\label{fo:Riccati2}
\dot{S}_t+S_t A+\hat{B}P_t+P_t B P_t +\hat{A}=0,\quad S_T=0,
\end{equation}
where we define
$$A=\begin{pmatrix}
0 & a \\
c & 0
\end{pmatrix}, B=\begin{pmatrix}
-\frac{b^2}{2} & 0\\
0 & 0
\end{pmatrix}, \hat{A}=\begin{pmatrix}
2q & 0\\
0 & 0
\end{pmatrix}, \hat{B}=\begin{pmatrix}
0 & c \\
a & 0
\end{pmatrix},$$
and $S_t$ is a $2 \times 2$ matrix which can be decomposed as
$$S=\begin{pmatrix}
S^{0,0}_t & S^{0,1}_t\\
S^{1,0}_t & S^{{1,1}}_t
\end{pmatrix}.$$
If the Riccati equation (\ref{fo:Riccati2}) is uniquely solvable, we solve the following forward SDE
$$\begin{cases}
dX^{0,N}_t=(aX^N_t-\frac{1}{2}b^2 (S^{0,0}_t X^{0,N}_t+S^{0,1}_t X^N_t))dt+ D_0 dW^0_t,\\
dX^N_t=cX^{0,N}_t dt+ \frac{D}{N} d(\sum^N_{i=1}W^i_t).
\end{cases}$$
and we obtain the optimally controlled dynamic for the major player. The optimal control is given by
$$u^0_t=-\frac{b}{2}\bar{Y}^{0,N}_t.$$

\subsection{The Current Scheme}
The scheme introduced in this paper proposes to solve the McKean-Vlasov control problem consisting of the controlled dynamics
$$\begin{cases}
dX^0_t=(a \mathbb{E}[X_t\vert \mathcal{F}^0_t]+b u^0_t)dt + D_0 dW^0_t,\\
dX_t= c X^0_t dt+ D dW_t,
\end{cases}$$
the objective function remains to be
$$J^0=\mathbb{E}\int^T_0 [q (X^0_t)^2+(u^0_t)^2 ]dt.$$
Applying directly the result in the LQG part of the paper we get the FBSDE
$$\begin{cases}
dX^0_t=(a\bar{X}_t-\frac{1}{2}b^2 \bar{P}^0_t)dt+D_0 dW^0_t,\\
dX_t=c X^0_t dt+ D dW_t,\\
dP^0_t=-(2q X^0_t+cP_t)dt+Q^{00}_t dW^0_t+Q^{01}_t dW_t,\\
dP_t=-a\bar{P}^0_t dt+Q^{10}_t dW^0_t+Q^{11}_t dW_t,
\end{cases}$$
and after conditioning we get
\begin{equation}\label{fo:FBSDEnew}
\begin{cases}
dX^0_t=(a\bar{X}_t-\frac{1}{2}b^2 \bar{P}^0_t)dt+D_0 dW^0_t,\\
d\bar{X}_t=c X^0_t dt,\\
d\bar{P}^0_t=-(2q X^0_t+c\bar{P}_t)dt+\bar{Q}^{00}_t dW^0_t,\\
d\bar{P}_t=-a\bar{P}^0_t dt+\bar{Q}^{10}_t dW^0_t.
\end{cases}
\end{equation}
We still use the four-step scheme to solve this FBSDE, and we see that the associated Riccati equation is again (\ref{fo:Riccati2}). We then solve the forward SDE
$$\begin{cases}
dX^0_t=(a\bar{X}_t-\frac{1}{2}b^2 (S^{0,0}_t X^0_t+S^{0,1}_t \bar{X}_t))dt+D_0 dW^0_t,\\
d\bar{X}_t=c X^0_t dt,
\end{cases}$$
and we obtain the solution. The optimal control $u^0$ is given by $-\frac{b}{2}\bar{P}^0_t$. We have the following proposition.

\begin{proposition}\label{pnew}
For all $t \in [0,T]$ we have
$$\vert X^{0,N}_t-X^0_t\vert+ \vert X^N_t-\bar{X}_t\vert \leq e^{Kt}\frac{D}{N}\sum^N_{i=1}W^i_t.$$
As a result, we have that for all $t\in [0,T]$,
$$X^{0,N}_t \rightarrow X^0_t, X^{N}_t \rightarrow \bar{X}_t, Y^{0,N}_t \rightarrow \bar{P}^0_t, Y^N_t \rightarrow \bar{P}_t, \text{a.s.,}$$
and finally we have the convergence of the optimal controls for the finite-player games towards the limiting optimal control, namely
$$u^{0,N}_t \rightarrow u^0_t \quad \text{a.s.,}\quad \forall t \in [0,T].$$
\end{proposition}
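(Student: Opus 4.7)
The plan rests on a key observation: the finite-player conditional dynamics $(\bar X^{0,N}_t, \bar X^N_t, \bar Y^{0,N}_t, \bar Y^N_t)$ obtained by projecting the four processes onto $\mathcal{F}^0_t$ satisfy exactly the same conditional FBSDE \eqref{fo:FBSDEnew} as the limiting system $(X^0_t, \bar X_t, \bar P^0_t, \bar P_t)$. Since the $W^i$ for $i \geq 1$ are independent of $\mathcal{F}^0$, conditional expectation kills the Brownian sum $\frac{D}{N}\,d(\sum_{i=1}^N W^i_t)$ appearing in the dynamics of $X^N_t$, and the conditioned adjoint BSDE reduces to precisely the $\bar P$ equation of the limit. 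Uniqueness of the linear conditional FBSDE then yields the exact identities $\bar X^{0,N}_t = X^0_t$, $\bar X^N_t = \bar X_t$, $\bar Y^{0,N}_t = \bar P^0_t$, and in particular the optimal controls coincide, $u^{0,N}_t = u^0_t$, for every $N$.

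With this identification in hand, I would subtract the limiting forward SDEs from the finite-player ones. The $D_0\,dW^0_t$ diffusions and the drift contributions $-\tfrac{b^2}{2}\bar Y^{0,N}_t\,dt = -\tfrac{b^2}{2}\bar P^0_t\,dt$ cancel exactly. The differences $\Delta^0_t := X^{0,N}_t - X^0_t$ and $\Delta_t := X^N_t - \bar X_t$ (both starting at zero) are then governed by the closed two-dimensional linear system $d\Delta^0_t = a\,\Delta_t\,dt$ and $d\Delta_t = c\,\Delta^0_t\,dt + \frac{D}{N}\,d(\sum_{i=1}^N W^i_t)$. Integration by parts on the Brownian driver converts the stochastic integral into a Lebesgue integral of $M^N_s := \frac{1}{N}\sum_{i=1}^N W^i_s$, and a one-line Gronwall estimate with constant $K$ depending only on $|a|$ and $c$ yields the pathwise bound asserted in the proposition.

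For the almost-sure convergences, note that $M^N_t$ is a scalar martingale with $\mathbb{E}[(M^N_T)^2] = T/N$, so Doob's maximal inequality combined with Borel--Cantelli along the subsequence $N = k^2$ (with monotone interpolation for the intermediate indices) yields $\sup_{t \leq T}|M^N_t| \to 0$ almost surely. This, together with the pathwise bound, delivers $X^{0,N}_t \to X^0_t$ and $X^N_t \to \bar X_t$ uniformly in $t$ almost surely. Convergence of the adjoints $Y^{0,N}_t \to \bar P^0_t$ and $Y^N_t \to \bar P_t$ follows by decomposing $Y^{0,N}_t = \bar Y^{0,N}_t + (Y^{0,N}_t - \bar Y^{0,N}_t) = \bar P^0_t + (\text{fluctuation})$, where the fluctuation is controlled by standard $L^2$ estimates applied to the linear BSDE satisfied by $Y^{i,N}$ once convergence of the forward variables is in hand. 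Finally, $u^{0,N}_t = -\frac{b}{2}\bar Y^{0,N}_t = -\frac{b}{2}\bar P^0_t = u^0_t$ is actually an equality for every $N$, so the stated a.s. limit is trivial.

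The main conceptual obstacle is justifying the identification of conditional dynamics rigorously: one must verify that conditional expectation with respect to $\mathcal{F}^0_t$ commutes appropriately with the stochastic integrals appearing in the finite-player FBSDE (a step which can be handled using the lemma on conditional stochastic integrals cited just above in the LQG computations), and then invoke uniqueness of the linear conditional FBSDE to force the four identifications. Once this is done, the rest is an elementary linear-ODE comparison driven by a vanishing Brownian martingale, and no conditional propagation-of-chaos machinery is required, in sharp contrast to the general Theorem \ref{th:CLT}.
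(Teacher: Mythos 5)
Your proposal is correct, and its second half (subtract the forward SDEs, Gronwall, then a law-of-large-numbers argument for $M^N_t=\frac1N\sum_{i=1}^NW^i_t$) is the same route the paper takes. The genuinely different ingredient is your opening step: projecting the entire finite-player FBSDE onto $\mathcal{F}^0_t$ and invoking uniqueness of the linear conditional FBSDE (\ref{fo:FBSDEnew}) to get the exact identities $\bar X^{0,N}=X^0$, $\bar X^N=\bar X$, $\bar Y^{0,N}=\bar P^0$, $\bar Y^N=\bar P$, hence $u^{0,N}_t\equiv u^0_t$ for every $N$. The paper never states this identification; it only notes that the two conditioned systems share the same Riccati equation and then runs Gronwall with the feedback terms absorbed into the constant $K$. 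Your version buys two things: the feedback contributions cancel exactly, so the difference system closes as a clean two-dimensional linear equation driven only by $\frac{D}{N}\sum_iW^i$ (this sidesteps the awkward point that the paper's decoupling relation $\bar Y^{0,N}_t=S^{0,0}_tX^{0,N}_t+S^{0,1}_tX^N_t$, written with the unconditioned forward variables, is not $\mathcal{F}^0_t$-measurable); and the convergence of the optimal controls is upgraded to an identity. Your treatment of the almost-sure limit is also the correct one: the Gronwall majorant must be $\sup_{s\le t}|\frac{D}{N}\sum_iW^i_s|$, which vanishes via Doob and Borel--Cantelli along a subsequence, whereas the majorant $\frac{D}{N}\sum_{i=1}^N\max_{0\le s\le t}|W^i_s|$ used in the paper's proof converges a.s.\ to the positive constant $D\,\mathbb{E}[\max_{0\le s\le t}|W^1_s|]$ by the strong law of large numbers and so does not by itself yield the stated convergence. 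The one step you should spell out is the commutation of $\mathbb{E}[\,\cdot\,|\mathcal{F}^0_t]$ with the time integrals in the forward equations, i.e.\ $\mathbb{E}[X^N_s|\mathcal{F}^0_t]=\mathbb{E}[X^N_s|\mathcal{F}^0_s]$ for $s\le t$, which follows from the independence of the increments of $W^0$ after time $s$ from $\mathcal{F}_s$; with that in place your argument is complete.
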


\proof{
For a fixed $t > 0$, by calculating the difference between the SDEs satisfied by processes $X^{0,N}$, $X^N$, $X^0$ and $\bar{X}$, we see that there exists a constant $K$ such that
$$\vert X^{0,N}_t-X^0_t\vert+\vert X^N_t-\bar{X}_t\vert \leq K\int^t_0 \vert X^{0,N}_s-X^0_s\vert+\vert X^N_s-\bar{X}_s\vert ds + \frac{D}{N} \sum^N_{i = 1} \max_{0 \leq s \leq t} \vert W^i_s\vert,$$
and since the function $t \rightarrow \frac{D}{N} \sum^n_{i = 1} \max_{0 \leq s \leq t} \vert W^i_s\vert$ is increasing in $t$, we have the desired inequality. The convergence of the processes follows by letting $N$ go to infinity. 
}

\subsection{The Scheme in \cite{NguyenHuang2,NourianCaines}}
We now turn to the scheme proposed in \cite{NguyenHuang2,NourianCaines}. We start by fixing a $\mathcal{F}^0_t$-progressively measurable process $m$,
and solve the control problem consisting of the dynamics
$$
dX^0_t=(am_t+b u^0_t)dt+ D_0 dW^0_t,\quad X^0_0=x^0_0,
$$
and the objective function
$$
J^0=\mathbb{E}\int^T_0 [q (X^0_t)^2+(u^0_t)^2] dt.
$$
By applying the usual Pontryagin maximum principle we quickly arrive at the following FBSDE characterizing the optimally controlled system:
$$
\begin{cases}
dX^0_t= (a m_t-\frac{1}{2}b^2 Y^0_t)dt+D_0 dW^0_t,\\
dY^0_t=-2q X^0_t dt+ Z^0_t dW^0_t,\\
X^0_0=x^0_0,\quad
Y^0_T=0.
\end{cases}
$$
We then impose the consistency condition $m_t=\mathbb{E}\left[ X_t \vert \mathcal{F}^0_t\right] := \bar{X}_t$ which leads to
the FBSDE:
\begin{equation}
\label{fo:FBSDEold}
\begin{cases}
dX^0_t= (a \bar{X}_t-\frac{1}{2}b^2 Y^0_t)dt+D_0 dW^0_t,\\
d\bar{X}_t=c X^0_t dt,\\
dY^0_t=-2q X^0_t dt+ Z^0_t dW^0_t,\\
\end{cases}
\end{equation}
The comparison of (\ref{fo:FBSDEold}) and (\ref{fo:FBSDEnew}) will be based on the following proposition.

\begin{proposition}\label{pold}
There exists $t \in [0,T]$ and an event $E \subset \Omega$ such that $\mathbb{P}(E)>0$ and on $E$,
$$\bar{P}^0_t \neq Y^0_t.$$
\end{proposition}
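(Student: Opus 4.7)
I would argue by contradiction, assuming that $\bar P^0_t = Y^0_t$ almost surely for every $t \in [0,T]$—which, by path continuity of the two BSDE solutions, is precisely the negation of the proposition—and derive an impossible conclusion by exploiting the extra coupling term $-c\bar P_t$ that distinguishes (\ref{fo:FBSDEnew}) from (\ref{fo:FBSDEold}).

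Under this hypothesis the feedback laws $-\tfrac{b}{2}\bar P^0_t$ from the current scheme and $-\tfrac{b}{2}Y^0_t$ from the scheme of \cite{NguyenHuang2,NourianCaines} coincide, so the two forward systems share identical drifts, driving noise, and initial conditions; pathwise uniqueness for linear SDEs then identifies the forward pairs $(X^0,\bar X)$ computed in (\ref{fo:FBSDEnew}) and (\ref{fo:FBSDEold}). Subtracting the $Y^0$-equation from the $\bar P^0$-equation yields
$$d(\bar P^0_t - Y^0_t) = -c\bar P_t\,dt + (\bar Q^{00}_t - Z^0_t)\,dW^0_t, \qquad \bar P^0_T - Y^0_T = 0.$$
Since the left-hand side is identically zero, uniqueness of the semimartingale decomposition forces $c\bar P_t = 0$ for Lebesgue-a.e. $t$ almost surely, and since $c \neq 0$ this gives $\bar P \equiv 0$. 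Feeding $\bar P \equiv 0$ into its own BSDE $d\bar P_t = -a\bar P^0_t\,dt + \bar Q^{10}_t\,dW^0_t$ yields $a\bar P^0 \equiv 0$, hence $\bar P^0 \equiv 0$ and therefore $Y^0 \equiv 0$; substituting back into $dY^0_t = -2qX^0_t\,dt + Z^0_t\,dW^0_t$ yields $q X^0 \equiv 0$, so $X^0 \equiv 0$.

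The contradiction now comes from the forward dynamics themselves: with $\bar P^0 \equiv 0$, the equation for $X^0$ reduces to $dX^0_t = a\bar X_t\,dt + D_0\,dW^0_t$, whose quadratic variation $D_0^2 t$ is strictly positive for $t > 0$, incompatible with $X^0 \equiv 0$. The main step I would need to argue carefully is the identification of the two forward processes across distinct FBSDEs—since \emph{a priori} they solve different equations on the same probability space—while the remainder is a routine propagation through the coupled backward system and relies only on the non-degeneracy $a,c,q,D_0\neq 0$, which holds by construction in the example.
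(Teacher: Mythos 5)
Your proposal is correct and follows essentially the same contradiction argument as the paper: identify the forward processes under the hypothesis $\bar P^0\equiv Y^0$, subtract the backward equations to force $\bar P\equiv 0$, propagate through the coupled system to get $\bar P^0\equiv 0$ and then $X^0\equiv 0$, and contradict this. You are in fact slightly more careful than the paper at the last step, since you make explicit that the contradiction comes from the nondegenerate quadratic variation $D_0^2 t$ of $X^0$, a point the paper leaves implicit.
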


\proof{
We prove this proposition by contradiction. Assume that  for all $t$, almost surely $\bar{P}^0_t = Y^0_t$. Plugging them into the first two equations of (\ref{fo:FBSDEold}) and (\ref{fo:FBSDEnew}), by uniqueness of solutions of SDEs, we know that the $X^0$ and $\bar{X}$ in these two systems are equal. Computing the difference between the third equations of (\ref{fo:FBSDEold}) and (\ref{fo:FBSDEnew}), we conclude that $\bar{P}$ is 0 by uniqueness of solutions of BSDE. Using the fourth equation in (\ref{fo:FBSDEnew}) we see that $\bar{P}$ is 0, and finally again by uniqueness of solutions of BSDE we see that $X^0$ is 0 because it is the driver in the third equation in (\ref{fo:FBSDEnew}). This is a contradiction.}

Note that the optimal control provided by the scheme in \cite{NguyenHuang2,NourianCaines} is given by $-\frac{b}{2}Y^0$. In light of Proposition \ref{pnew} and \ref{pold}, we conclude that the two schemes lead to different optimal controls, and the Nash equilibria for the finite-player games converge towards the one produced by the current scheme, instead of the one produced by the scheme proposed in \cite{NguyenHuang2,NourianCaines}.

\section{Conditional Propagation of Chaos}
\label{se:conditional_chaos}

In this section we consider a system of $(N+1)$ interacting particles with stochastic dynamics:
\begin{equation}\label{fo:finitepoc}
\begin{cases}
dX^{0,N}_t=b_0(t,X^{0,N}_t,\mu^N_t)dt+\sigma_0(t,X^{0,N}_t,\mu^N_t) dW^0_t,\\
dX^{i,N}_t=b(t,X^{i,N}_t,\mu^N_t,X^{0,N}_t)dt+\sigma(t,X^{i,N}_t,\mu^N_t,X^{0,N}_t) dW^i_t, \ \ i=1,2,...,N,\\
X^{0,N}_0=x^0_0,\quad
X^{i,N}_0=x_0, \ \ i=1,2,...,N,
\end{cases}
\end{equation}
on a probability space $(\Omega, \mathcal{F},\PP)$, where the empirical measure $\mu^N$ was defined in \eqref{fo:muN}.
Here $(W^i)_{i \geq 0}$ is a sequence of independent Wiener processes,  $W^0$ being $n_0$-dimensional and $W^i$  $n$-dimensional for $i \geq 1$. The major-particle process $X^{0,N}$ is $d_0$-dimensional, and the minor-particle processes $X^{i,N}$ are $d$-dimensional for $i \geq 1$. The coefficient functions 
$$
\begin{aligned}
&(b_0,\sigma_0) : [0,T]\times \Omega \times \mathbb{R}^{d_0}\times \mathcal{P}_2(\mathbb{R}^d) \to \mathbb{R}^{d_0}\times \mathbb{R}^{d_0\times m_0},,\\
&(b,\sigma) : [0,T] \times \Omega \times \mathbb{R}^{d} \times \mathcal{P}_2(\mathbb{R}^d) \times \mathbb{R}^{d_0}\to  \mathbb{R}^{d}\times \mathbb{R}^{d\times m},
\end{aligned}
$$
are allowed to be random, and as usual, $\mathcal{P}_2(E)$ denotes the space of probability measures on $E$ having a finite second moment. 
We shall make the following assumptions.\\
\textbf{(A1.1)} The functions $b_0$ and $\sigma_0$  (resp. $b$ and $\sigma$) are $\mathcal{P}^{W^0} \otimes \mathcal{B}(\mathbb{R}^{d_0})\otimes \mathcal{B}(\mathcal{P}(\mathbb{R}^d))$-measurable (resp. $\mathcal{P}^{W^0} \otimes \mathcal{B}(\mathbb{R}^{d})\otimes \mathcal{B}(\mathcal{P}(\mathbb{R}^d)) \otimes \mathcal{B}(\mathbb{R}^{d_0})$-measurable), where $\mathcal{P}^{W^0}$ is the progressive $\sigma$-field associated with the filtration $\mathcal{F}^0_t$ on $[0,T] \times \Omega$ and $\mathcal{B}(\mathcal{P}(\mathbb{R}^d))$ is the Borel $\sigma$-field generated by the metric $W_2$.\\
\textbf{(A1.2)} There exists a constant $K>0$ such that for all $t \in [0,T]$, $\omega \in \Omega$, $x, x' \in \mathbb{R}^d$, $x_0, x'_0 \in \mathbb{R}^{d_0}$ and $\mu, \mu' \in \mathcal{P}_2(\mathbb{R}^d)$,
$$\begin{aligned}
&|(b_0,\sigma_0)(t,\omega, x_0, \mu)-b_0(t, \omega, x'_0, \mu')| \leq K(|x_0-x'_0|+W_2(\mu, \mu')),\\
&|(b,\sigma)(t,\omega,x,\mu,x_0)-b(t, \omega, x', \mu', x'_0)| \leq K(|x-x'|+|x_0-x'_0|+W_2(\mu, \mu')).
\end{aligned}
$$
\textbf{(A1.3)} We have
$$
\mathbb{E}\left[\int^T_0 \vert (b_0,\sigma_0)(t,0,\delta_0) \vert^2+\vert (b,\sigma)(t,0,\delta_0,0) \vert^2 dt\right] < \infty.
$$

\vskip 2pt
Our goal is to study the limiting behaviour of the solution of the system (\ref{fo:finitepoc}) when $N$ tends to infinity. The limit will be given by the so-called \emph{limiting nonlinear processes}, but before defining it, we need to introduce notations and definitions for the regular versions of conditional probabilities
which we use throughout the remainder of the paper.

\subsection{Regular conditional distributions and optional projections}
\label{sub:conditional}
We consider a measurable space $(\Omega,\cF)$ and we assume that $\Omega$ is standard and $\cF$ is its Borel $\sigma$-field to allow us to use regular conditional distributions for any sub-$\sigma$-field of $\mathcal{F}$. In fact, if $(\cG_t)$ is a right continuous filtration, we make use of the existence of a map $\Pi^\cG:[0,\infty)\times\Omega \hookrightarrow \cP(\Omega)$ which is $(\cO,\cB(\cP(\Omega))$-measurable and such that for each $t\ge 0$, $\{\Pi^\cG_t(\omega,A);\,\omega\in\Omega,\,A\in\cF\}$ is a regular version of the conditional probability of $\PP$ given the $\sigma$-field $\cG_t$.
Here $\cO$ denotes the optional $\sigma$-field of the filtration $(\cG_t)$. This result is a direct consequence of Proposition 1 in \cite{Yor} applied to the the process $(X_t)$ given by the identity map of $\Omega$ and the constant filtration $\cF_t\equiv\cF$.
For each $t\ge 0$, we define the probability measures $\mathbb{P}\otimes \Pi^\mathcal{G}_t$ and $ \Pi^\mathcal{G}_t\otimes\mathbb{P}$ on $\Omega^2=\Omega \times \Omega$ via the formulas
\begin{equation}
\label{fo:probas}
\mathbb{P}\otimes \Pi^\mathcal{G}_t(A \times B)=\int_A \Pi^\mathcal{G}_t(\omega, B) \mathbb{P}(d\omega).
\quad\text{and}\quad
\Pi^\mathcal{G}_t\otimes\mathbb{P}(A \times B)=\int_B \Pi^\mathcal{G}_t(\omega, A) \mathbb{P}(d\omega).
\end{equation}
It is easy to check that, integrals of functions of the form $\Omega^2\ni (\omega,\t\omega)\hookrightarrow \varphi(\omega)\psi(\t\omega)$ with respect to these two measures are equal. This shows that these two measures are the same. We will use this result in the following way: if $X$ is measurable and bounded on $\Omega^2$, we can interchange $\omega$ and $\t\omega$ in the integrand of
$$
\int_{\Omega^2}X(\omega,\t\omega)\Pi^\mathcal{G}_t(\omega, d\t\omega) \mathbb{P}(d\omega)
$$
without changing the value of the integral.

In this section, we often use the notation $\mathbb{E}^{\mathcal{G}_t}$ for the expectation with respect to the transition kernel $\Pi_t^\mathcal{G}$, i.e. for all random variable $X : \Omega^2 \ni (\omega,\tilde{\omega})\hookrightarrow X(\omega,\tilde{\omega})\in\mathbb{R}$, we define
$$
\mathbb{E}^{\mathcal{G}_t}[X(\omega,\tilde{\omega})] =\int_\Omega X(\omega, \tilde{\omega}) \Pi^\mathcal{G}_t(\omega, d\tilde{\omega}),
$$
which, as a function of $\omega$, is a random variable on $\Omega$. Also, we still use $\mathbb{E}$ to denote the expectation with respect to the first argument, i.e.
$$
\mathbb{E}[X]=\int_\Omega X(\omega, \tilde{\omega}) \mathbb{P}(d\omega),
$$
which, as a function of $\t\omega$, is a random variable on $\Omega$. Finally, whenever we have a random variable $X$ defined on $\Omega$, we define the random variable $\tilde{X}$ on $\Omega^2$ via the formula
$\tilde{X}(\omega,\tilde{\omega})=X(\tilde{\omega})$.

\subsection{Conditional McKean-Vlasov SDEs}
In order to define properly the limiting nonlinear processes, we first derive a few technical properties of the conditional distribution of a process with respect to a filtration. We now assume that the filtration $(\mathcal{G}_t)$ is a sub-filtration of a right continuous filtration $(\mathcal{F}_t)$, in particular  $\mathcal{G}_t \subseteq \mathcal{F}_t$ for all $t\ge 0$, and that $(X_t)$ is an $\mathcal{F}_t$-adapted continuous process taking values in a Polish space $(E,\mathcal{E})$.  Defining $\mu^X_t(\omega)$ as the distribution of the random variable $X_t$ under the probability measure $\Pi^\cG_t(\omega,\,\cdot\,)$, we obtain the following result which we state as a lemma for future reference.

\begin{lemma}
\label{proj}
There exists a stochastic measure flow $\mu^X:[0,\infty)\times \Omega \rightarrow \mathcal{P}(E)$ such that
\begin{enumerate}
\item $\mu^X$ is $\mathcal{P}$/$\mathcal{B}(\mathcal{P}(E))$-measurable, where $\mathcal{P}$ is the progressive $\sigma$-field associated to $(\mathcal{G}_t)$ on $[0,\infty)\times \Omega$, and $\mathcal{B}(\mathcal{P}(E))$ the Borel $\sigma$-field of the weak topology on $\mathcal{P}(E)$.
\item $\forall t \geq 0$, $\mu^X_t$ is a regular conditional distribution of $X_t$ given $\mathcal{G}_t$;
\end{enumerate}
\end{lemma}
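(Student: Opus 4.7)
The plan is to construct $\mu^X$ as the pushforward of the kernel $\Pi^{\cG}$ under $X_t$ and then derive both properties from what is already known about $\Pi^{\cG}$. Concretely, set
$$
\mu^X_t(\omega)(B) \;:=\; \Pi^{\cG}_t(\omega, X_t^{-1}(B)), \qquad B \in \cE,
$$
or equivalently, for every bounded measurable $f : E \to \RR$,
$$
\int_E f \, d\mu^X_t(\omega) \;=\; \int_\Omega f(X_t(\t\omega)) \, \Pi^{\cG}_t(\omega, d\t\omega).
$$
Property (2) is then immediate: the defining property of $\Pi^{\cG}_t$ as a regular conditional probability of $\PP$ given $\cG_t$ yields, for every $A \in \cG_t$ and every bounded measurable $f$,
$$
\int_A \!\int_E f \, d\mu^X_t(\omega)\, \PP(d\omega) \;=\; \EE\bigl[\bone_A f(X_t)\bigr],
$$
so $\mu^X_t$ is a version of $\cL(X_t \mid \cG_t)$.

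The substantive step is Property (1). I would first show, by a monotone class argument, that for every bounded $\cB([0,\infty))\otimes\cF$-measurable function $\phi$ the map
$$
(t,\omega) \;\longmapsto\; \int_\Omega \phi(t,\t\omega)\, \Pi^{\cG}_t(\omega, d\t\omega)
$$
is $\cP$-measurable. For product functions $\phi(t,\t\omega) = \psi(t)\chi(\t\omega)$ with $\psi$ bounded Borel and $\chi$ bounded $\cF$-measurable, this integral equals $\psi(t)\int \chi \, d\Pi^{\cG}_t(\omega,\cdot)$; the $\omega$-factor is $\cO$-measurable (and hence $\cP$-measurable, since $\cO\subset\cP$) because $\Pi^{\cG}$ is $(\cO,\cB(\cP(\Omega)))$-measurable. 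Such products generate $\cB([0,\infty))\otimes\cF$ and the class of admissible $\phi$ is closed under uniform and bounded monotone limits, so the monotone class theorem extends the conclusion to all bounded $\cB([0,\infty))\otimes\cF$-measurable $\phi$.

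Next, fix a countable convergence-determining family $\{f_n\}_{n\ge 1}$ of bounded continuous functions on the Polish space $E$. Since $X$ is continuous and $\cF$-adapted, each $\phi_n(t,\t\omega) := f_n(X_t(\t\omega))$ is bounded and $\cB([0,\infty))\otimes\cF$-measurable, so the previous step shows that $(t,\omega) \mapsto \int f_n\, d\mu^X_t(\omega)$ is $\cP$-measurable for every $n$. Since $\cB(\cP(E))$ is generated by the cylinder maps $\pi \mapsto \int f_n\, d\pi$, this yields the desired $\cP/\cB(\cP(E))$-measurability of $\mu^X$.

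The main obstacle is that $\mu^X$ takes values in $\cP(E)$ while the measurability information available concerns only the kernel $\Pi^{\cG}$ with values in $\cP(\Omega)$ together with the time-dependent evaluation $X_t$. Running the monotone class argument on the product space $[0,\infty)\times\Omega$, rather than separately in each variable, is what handles the $t$-dependence of $X_t$ cleanly, while transferring from real-valued projections to $\cP(E)$-valued measurability via a countable convergence-determining family is what delivers the final statement. A shorter alternative would be to apply Proposition~1 of \cite{Yor} directly to the process $(X_t)$ in $E$; the construction above has the advantage of producing an explicit identification of $\int f\, d\mu^X_t(\omega)$ with $\EE^{\cG_t}[f(X_t)]$, which is precisely the form used in the conditional expectation computations of this section.
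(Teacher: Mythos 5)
Your construction is exactly the paper's: $\mu^X_t(\omega)$ is defined as the image of $\Pi^{\mathcal G}_t(\omega,\cdot)$ under $X_t$, and the paper states the lemma without further proof as a consequence of the $(\mathcal O,\mathcal B(\mathcal P(\Omega)))$-measurability of $\Pi^{\mathcal G}$ obtained from Yor's proposition. Your monotone class argument on $[0,\infty)\times\Omega$ combined with a countable convergence-determining family correctly supplies the progressive-measurability details the paper leaves implicit, so the proposal is correct and follows the same route.
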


\vskip 4pt\noindent
We first study the well-posedness of the SDE:
\begin{equation}\label{fo:cMkV}
dX_t=b(t,X_t,\mathcal{L}(X_t\vert \mathcal{G}_t))dt+\sigma (t,X_t,\mathcal{L}(X_t\vert \mathcal{G}_t))dW_t.
\end{equation}
We  say that this SDE is of the conditional McKean-Vlasov type because the conditional distribution of $X_t$ with respect to $\mathcal{G}_t$ enters the dynamics. Note that when $\mathcal{G}_t$ is the trivial $\sigma$-field, (\ref{fo:cMkV}) reduces to a classical McKean-Vlasov SDE.
In the following, when writing $\mathcal{L}(X_t \vert \mathcal{G}_t)$ we always mean $\mu^X_t$, for the stochastic flow $\mu^X$ whose existence is given in Lemma \ref{proj}.

The analysis of the SDE (\ref{fo:cMkV}) is done under the following assumptions. We let $W$ be a $m$-dimensional Wiener process on a probability space $(\Omega,\mathcal{F},\mathbb{P})$, $\mathcal{F}^0_t$ its (raw) filtration, $\mathcal{F}_t=\mathcal{F}_t^W$ its usual $\mathbb{P}$-augmentation, and $\mathcal{G}_t$ a sub-filtration of $\mathcal{F}_t$ also satisfying the usual conditions. We impose the following standard assumptions on $b$ and $\sigma$:

\noindent \textbf{(B1.1)} The function
 $$
(b,\sigma):[0,T]\times \Omega \times \mathbb{R}^n \times \mathcal{P}(\mathbb{R}^n) \ni (t,\omega,x,\mu) \hookrightarrow (b(t,\omega,x,\mu),\sigma(t,\omega,x,\mu)) \in \mathbb{R}^n\times \RR^{n \times m}
$$
is $\mathcal{P}^{\mathcal{G}} \otimes \mathcal{B}(\mathbb{R}^{n})\otimes \mathcal{B}(\mathcal{P}(\mathbb{R}^n))$-measurable, where $\mathcal{P}^{\mathcal{G}}$ is the progressive $\sigma$-field associated with the filtration $\mathcal{G}_t$ on $[0,T] \times \Omega$;\\
\textbf{(B1.2)} There  exists $K>0$ such that for all $t \in [0,T]$, $\omega \in \Omega$, $x, x' \in \mathbb{R}^n$,  and $\mu, \mu' \in \mathcal{P}_2(\mathbb{R}^n)$, we have:
$$
|b(t,\omega, x, \mu)-b(t, \omega, x', \mu')|+|\sigma(t,\omega, x, \mu)-\sigma(t, \omega, x', \mu')| \leq K(|x-x'|+W_2(\mu, \mu')).
$$
\textbf{(B1.3)} It holds:
$$\mathbb{E}\left[\int^T_0 \vert b(t,0,\delta_0) \vert^2+\vert \sigma(t,0,\delta_0) \vert^2 dt\right] < \infty.$$

\begin{definition}
By a (strong) solution of (\ref{fo:cMkV}) we mean an $\mathcal{F}_t$-adapted continuous process $X$ taking values in $\mathbb{R}^n$ such that for all $t \in [0,T]$,
$$X_t=x_0 +\int^t_0 b(s,X_s,\mathcal{L}(X_s\vert \mathcal{G}_s))ds+\int^t_0 \sigma (s,X_s,\mathcal{L}(X_s\vert \mathcal{G}_s))dW_s, \quad \text{a.s..}$$
\end{definition}

In order to establish the well-posedness of (\ref{fo:cMkV}) we need some form of control on the 2-Wasserstein distance between two conditional distributions. We shall use the following dual representation:

\begin{proposition}
If $\mu,\nu\in\mathcal{P}_2(E)$ where $E$ is an Euclidean space, then:
$$
W^2_2(\mu,\nu)=\sup_{\phi \in \mathcal{C}^{\text{Lip}}_b(E)}\bigg(\int_E \phi^* d\mu - \int_E \phi d\nu\bigg),
$$
where $\phi^*(x) := \inf_{z \in E} \phi(z)+|x-z|^2. $
\end{proposition}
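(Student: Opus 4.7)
The statement is a special form of Kantorovich--Rubinstein duality adapted to the quadratic cost $c(x,y)=|x-y|^2$, where the infimum defining $\phi^{\ast}$ is exactly the $c$-transform of $-\phi$. My plan is to prove the two inequalities separately: the ``easy'' bound by exhibiting a feasible test against any coupling, and the ``hard'' bound by invoking the general Kantorovich duality and then approximating the optimal potentials by Lipschitz bounded functions.

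For the easy direction ($\sup \le W_2^2$), fix any $\phi\in\mathcal{C}^{\mathrm{Lip}}_b(E)$. By the very definition of $\phi^{\ast}$, choosing $z=y$ in the infimum gives $\phi^{\ast}(x)\le\phi(y)+|x-y|^2$, i.e.\ $\phi^{\ast}(x)-\phi(y)\le|x-y|^2$ for all $x,y\in E$. Pairing this pointwise inequality against an arbitrary coupling $\pi$ of $(\mu,\nu)$ yields
\[
\int\phi^{\ast}\,d\mu-\int\phi\,d\nu=\int[\phi^{\ast}(x)-\phi(y)]\,d\pi(x,y)\le\int|x-y|^2\,d\pi(x,y).
\]
Taking the infimum over $\pi$ (which is finite because $\mu,\nu\in\mathcal{P}_2$, so $\phi^{\ast}$ and $\phi$ are $\mu$- and $\nu$-integrable) gives $\int\phi^{\ast}d\mu-\int\phi\,d\nu\le W_2^2(\mu,\nu)$, and then supping over $\phi$ yields the $\le$ inequality.

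For the reverse direction ($\sup\ge W_2^2$), I would invoke the classical Kantorovich duality for the lower semicontinuous cost $c(x,y)=|x-y|^2$ on the Polish space $E\times E$: there exist measurable potentials $f,g$, $\mu$- and $\nu$-integrable respectively, with $f(x)+g(y)\le|x-y|^2$ and
\[
W_2^2(\mu,\nu)=\sup\Bigl\{\textstyle\int f\,d\mu+\int g\,d\nu:\ f(x)+g(y)\le|x-y|^2\Bigr\}.
\]
Writing $\phi:=-g$ turns the constraint into $f(x)\le\phi(y)+|x-y|^2$, and the largest admissible $f$ attached to a given $\phi$ is precisely $f=\phi^{\ast}$. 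Hence Kantorovich duality can be rewritten as $W_2^2(\mu,\nu)=\sup_{\phi}[\int\phi^{\ast}d\mu-\int\phi\,d\nu]$, where $\phi$ ranges a priori over a large admissible class (e.g.\ measurable functions bounded below by an affine-in-$|y|^2$ function).

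The main obstacle is therefore the restriction of the supremum to $\mathcal{C}^{\mathrm{Lip}}_b(E)$: optimal Kantorovich potentials for the quadratic cost are only guaranteed to be $|\cdot|^2$-semi-concave, hence possibly unbounded on an unbounded Euclidean space. To close the argument I would use a standard two-step approximation. First, replace any admissible $\phi$ by its inf-convolution $\phi_\varepsilon(y):=\inf_w[\phi(w)+\varepsilon^{-1}|y-w|^2]$, which is Lipschitz (with constant $O(1/\varepsilon)$), bounded above by $\phi$, and decreases to $\phi$ as $\varepsilon\to 0$; one checks that $\phi_\varepsilon^{\ast}$ likewise converges monotonically to $\phi^{\ast}$ on $E$, and dominated convergence (using $\mu,\nu\in\mathcal{P}_2$ to control the quadratic growth) gives convergence of both integrals. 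Second, truncate: replace $\phi_\varepsilon$ by $(\phi_\varepsilon\vee(-n))\wedge n$, which remains Lipschitz and is now bounded; the same dominated convergence argument in $n\to\infty$ concludes that $\int\phi^{\ast}d\mu-\int\phi\,d\nu$ can be approximated arbitrarily well by the expression tested against a function in $\mathcal{C}^{\mathrm{Lip}}_b(E)$. Combining this with the Kantorovich formula yields the desired $\ge$ inequality and completes the proof.
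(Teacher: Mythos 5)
The paper itself offers no proof of this proposition --- it is stated as a known dual representation (a special case of Kantorovich duality for the quadratic cost, cf.\ the cited Rachev--R\"uschendorf monograph) and used as a black box to derive Lemma 7.2. So there is no argument of the authors' to compare yours against; what follows is an assessment of your reconstruction on its own terms. Your easy direction is complete and correct: $\phi^*(x)-\phi(y)\le |x-y|^2$ by taking $z=y$, integrate against any coupling, optimize. Your identification of the hard direction with Kantorovich duality plus the observation that, for fixed $\phi$, the optimal partner potential is exactly $\phi^*$ (the $c$-transform) is also the right idea.

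The gaps are all in the approximation step that reduces to $\mathcal{C}^{\text{Lip}}_b(E)$. First, a sign error: the inf-convolution $\phi_\varepsilon(y)=\inf_w[\phi(w)+\varepsilon^{-1}|y-w|^2]$ \emph{increases} to (the lsc envelope of) $\phi$ as $\varepsilon\downarrow 0$, it does not decrease. Second, and more seriously, $\phi_\varepsilon$ is \emph{not} globally Lipschitz when $\phi$ is unbounded (take $\phi(w)=|w|^2$: the envelope is again a quadratic), so performing the inf-convolution before the truncation does not produce an admissible test function; the order of your two steps matters and as written it fails. Third, the approximation must preserve the dual value from below, and replacing $\phi$ by some $\tilde\phi\le\phi$ improves the term $-\int\phi\,d\nu$ but \emph{worsens} the term $\int\phi^*d\mu$ (since $\tilde\phi\le\phi$ forces $\tilde\phi^*\le\phi^*$); so "monotone convergence of both integrals" does not come for free and needs an honest domination argument using $\mu,\nu\in\mathcal{P}_2(E)$. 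A cleaner route that avoids all three issues: start from the standard $C_b\times C_b$ form of Kantorovich duality, take a near-optimal bounded continuous pair $(f,g)$ with $f(x)+g(y)\le|x-y|^2$, set $\phi=-g$ (bounded), and then replace $\phi$ by its double $c$-transform $\phi^{**}:=(\phi^*)^*$ with $\psi^*(y):=\inf_x[\psi(x)+ |x-y|^2]$ in the appropriate convention. For bounded $\phi$ the minimizing point in the transform stays within distance $\sqrt{2\|\phi\|_\infty}$, so $\phi^{**}$ is bounded and globally Lipschitz, hence admissible; moreover $\phi^{**}\le\phi$ while $(\phi^{**})^*=\phi^*$, so the substitution only increases the dual functional. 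This closes the argument without any limiting procedure.
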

\noindent
We shall use the following consequences of this representation.

\begin{lemma}
 \label{lwasserstein}
If $X$ and $Y$ are two random variables of order $2$ taking values in a Euclidean space, and $\mathcal{G}$ a sub-$\sigma$-field of $\mathcal{F}$, then for all $p \geq 2$ we have:
$$
W^p_2(\mathcal{L}(X|\mathcal{G}),\mathcal{L}(Y|\mathcal{G}))\leq \mathbb{E}[|X-Y|^p\vert \mathcal{G}], \text{a.s.}.
$$
By taking expectations on both sides we further have
$$\mathbb{E}\left[W^p_2(\mathcal{L}(X|\mathcal{G}),\mathcal{L}(Y|\mathcal{G}))\right] \leq \mathbb{E}[|X-Y|^p].$$
\end{lemma}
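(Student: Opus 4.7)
The plan is to first establish the inequality for $p=2$ via the dual representation of $W_2^2$ stated just above, and then to bootstrap to general $p\ge 2$ by conditional Jensen's inequality applied to the convex map $r\mapsto r^{p/2}$.

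For the case $p=2$, fix $\phi\in\mathcal{C}^{\mathrm{Lip}}_b(E)$ and recall $\phi^*(x)=\inf_{z}\phi(z)+|x-z|^2$. Taking $z=Y(\omega)$ in the infimum yields the pointwise bound $\phi^*(X(\omega))\le \phi(Y(\omega))+|X(\omega)-Y(\omega)|^2$ for every $\omega$. Using the definition of the regular conditional distributions $\mathcal{L}(X|\mathcal{G})$ and $\mathcal{L}(Y|\mathcal{G})$ as in Section~\ref{sub:conditional}, we identify
$$
\int_E \phi^*\, d\mathcal{L}(X|\mathcal{G})-\int_E \phi\, d\mathcal{L}(Y|\mathcal{G})=\mathbb{E}[\phi^*(X)\,|\,\mathcal{G}]-\mathbb{E}[\phi(Y)\,|\,\mathcal{G}]\le \mathbb{E}[|X-Y|^2\,|\,\mathcal{G}]\quad\text{a.s.,}
$$
by the conditional monotonicity and linearity of $\mathbb{E}[\cdot|\mathcal{G}]$. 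Taking the supremum over $\phi$ and invoking the dual representation of $W_2^2$ then yields $W_2^2(\mathcal{L}(X|\mathcal{G}),\mathcal{L}(Y|\mathcal{G}))\le \mathbb{E}[|X-Y|^2|\mathcal{G}]$ almost surely.

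For $p>2$, raising both sides to the power $p/2\ge 1$ and applying conditional Jensen's inequality to the convex function $r\mapsto r^{p/2}$ gives
$$
W_2^p(\mathcal{L}(X|\mathcal{G}),\mathcal{L}(Y|\mathcal{G}))\le \bigl(\mathbb{E}[|X-Y|^2|\mathcal{G}]\bigr)^{p/2}\le \mathbb{E}\bigl[|X-Y|^p\,\big|\,\mathcal{G}\bigr]\quad\text{a.s.,}
$$
which is the desired pointwise bound. The unconditional inequality follows by taking expectations and using the tower property.

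The only delicate point I anticipate is measurability: the pointwise inequality displayed above holds almost surely for each fixed $\phi$, but the dual representation requires a supremum over the uncountable family $\mathcal{C}^{\mathrm{Lip}}_b(E)$, so the exceptional null set could a priori depend on $\phi$. This is the main obstacle, and I would handle it by exploiting separability: since $E$ is a Euclidean space, $\mathcal{C}^{\mathrm{Lip}}_b(E)$ admits a countable dense subfamily (with respect to uniform convergence on compacts together with a uniform Lipschitz bound) that still attains the supremum in the dual formula for $W_2^2$ when evaluated against measures in $\mathcal{P}_2(E)$. Restricting the supremum to this countable family gives a measurable version of $W_2^2(\mathcal{L}(X|\mathcal{G}),\mathcal{L}(Y|\mathcal{G}))$ and produces a single null set outside of which the inequality holds simultaneously for all test functions, completing the argument.
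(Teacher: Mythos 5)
Your proof is correct and follows essentially the same route as the paper: the dual representation of $W_2^2$ combined with the pointwise bound $\phi^*(X)\le\phi(Y)+|X-Y|^2$, followed by conditional Jensen's inequality to pass from $p=2$ to general $p\ge 2$. Your additional care with the $\phi$-dependent null sets (reducing the supremum to a countable dense family) addresses a point the paper's one-line proof silently glosses over, and is a worthwhile refinement.
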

\begin{proof}
By using the above dual representation formula and the characteristic equation for conditional distributions, we get
$$W^2_2(\mathcal{L}(X|\mathcal{G}),\mathcal{L}(Y|\mathcal{G}))=\sup_{\phi \in \mathcal{C}^{\text{Lip}}_b(E)}\mathbb{E}[\phi^*(X)-\phi(Y)|\mathcal{G}] \leq \mathbb{E}[|X-Y|^2\vert \mathcal{G}],
$$
and the first inequality follows by applying the conditional Jensen's inequality.
\end{proof}

We then have the following well-posedness result.

\begin{proposition}\label{pMkV}
The conditional McKean-Vlasov SDE (\ref{fo:cMkV}) has a unique strong solution. Moreover, for all $p \geq 2$, if we replace the assumption (B1.3) by
$$\mathbb{E}\int^T_0 \vert b(t,0,\delta_0)\vert^p+\vert \sigma(t,0,\delta_0)\vert^p dt < \infty,$$
then, the solution of (\ref{fo:cMkV}) satisfies
$$\mathbb{E}\left[\sup_{0 \leq t \leq T} \vert X_t\vert^p \right] < \infty.$$
\end{proposition}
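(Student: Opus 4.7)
The plan is a Picard iteration in $\mathbb{S}^{2,n}(\FF)$, using Lemma~\ref{lwasserstein} to convert conditional $W_2$-distances into plain $L^2$-distances, with the central subtlety being purely measurability-theoretic: the conditional distribution $\mathcal{L}(X^{(n)}_s\vert \mathcal{G}_s)$ must be chosen as a genuinely $(\mathcal{G}_t)$-progressive stochastic flow in $(s,\omega)$, rather than a family merely defined almost surely for each $s$, so that the composition $b(s,\omega,x,\mu^{(n)}_s(\omega))$ is $\FF$-progressive and the associated It\^o integral is well defined. This is precisely what Lemma~\ref{proj} guarantees. Once this is in hand, I set $X^{(0)}_t\equiv x_0$ and define iteratively
\begin{equation*}
X^{(n+1)}_t=x_0+\int^t_0 b(s,X^{(n)}_s,\mu^{(n)}_s)\,ds+\int^t_0\sigma(s,X^{(n)}_s,\mu^{(n)}_s)\,dW_s,
\end{equation*}
with $\mu^{(n)}_s:=\mathcal{L}(X^{(n)}_s\vert \mathcal{G}_s)$ the progressive version. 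Assumptions (B1.2)--(B1.3), together with the fact that $\mathbb{E}[M_2(\mu^{(n)}_s)]=\mathbb{E}\vert X^{(n)}_s\vert^2$, yield by induction $X^{(n)}\in\mathbb{S}^{2,n}(\FF)$.

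Letting $\beta_n(t):=\mathbb{E}\big[\sup_{s\le t}\vert X^{(n+1)}_s-X^{(n)}_s\vert^2\big]$, the Cauchy--Schwarz and Burkholder--Davis--Gundy inequalities together with (B1.2) yield
\begin{equation*}
\beta_n(t)\le C\int_0^t\Big(\mathbb{E}\vert X^{(n)}_s-X^{(n-1)}_s\vert^2+\mathbb{E}\big[W_2^2(\mu^{(n)}_s,\mu^{(n-1)}_s)\big]\Big)\,ds.
\end{equation*}
Lemma~\ref{lwasserstein} bounds the Wasserstein term by $\mathbb{E}\vert X^{(n)}_s-X^{(n-1)}_s\vert^2\le\beta_{n-1}(s)$, hence $\beta_n(t)\le 2C\int_0^t\beta_{n-1}(s)\,ds$, and iteration yields $\beta_n(T)\le (2CT)^n\beta_0(T)/n!$. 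Therefore $(X^{(n)})$ is Cauchy in $\mathbb{S}^{2,n}(\FF)$; its limit $X$ satisfies~\eqref{fo:cMkV} by passing to the limit in each integral (using Lemma~\ref{lwasserstein} once more for the measure arguments). Uniqueness is obtained by applying the same estimate to the difference of two solutions and invoking Gronwall's lemma.

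For the $L^p$ moment estimate under the strengthened integrability, (B1.2) gives the linear growth bound $\vert b(t,x,\mu)\vert+\vert\sigma(t,x,\mu)\vert\le\vert b(t,0,\delta_0)\vert+\vert\sigma(t,0,\delta_0)\vert+K(\vert x\vert+M_2(\mu)^{1/2})$. Since $p\ge 2$, conditional Jensen gives $M_2(\mu^X_s)^{p/2}=\mathbb{E}[\vert X_s\vert^2\vert \mathcal{G}_s]^{p/2}\le\mathbb{E}[\vert X_s\vert^p\vert \mathcal{G}_s]$, so $\mathbb{E}[M_2(\mu^X_s)^{p/2}]\le\mathbb{E}\vert X_s\vert^p$. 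A standard BDG and Gronwall argument, together with the assumed $L^p$-integrability of $b(\cdot,0,\delta_0)$ and $\sigma(\cdot,0,\delta_0)$, then yields $\mathbb{E}\sup_{s\le T}\vert X_s\vert^p<\infty$.
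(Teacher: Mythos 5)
Your proof is correct, and it reaches the same destination as the paper by a mildly different route. Where you run a classical Picard iteration in $\mathbb{S}^{2}$ and obtain convergence from the factorial decay $\beta_n(T)\le (2CT)^n\beta_0(T)/n!$, the paper instead sets up a single application of the Banach fixed point theorem in the weighted space $\mathbb{H}^2_c$ of progressively measurable processes with norm $\|X\|_c^2=\mathbb{E}\left[\int_0^T e^{-ct}|X_t|^2\,dt\right]$, choosing $c$ large enough that the solution map is a strict contraction; that variant delivers existence and uniqueness in one stroke, whereas your iteration needs the separate Gronwall argument for uniqueness that you correctly include. The essential content is identical in both treatments: the Lipschitz hypothesis (B1.2) combined with Lemma~\ref{lwasserstein} to convert $\mathbb{E}\bigl[W_2^2(\mathcal{L}(X'_s\vert\mathcal{G}_s),\mathcal{L}(X_s\vert\mathcal{G}_s))\bigr]$ into $\mathbb{E}|X'_s-X_s|^2$, and the progressive measurability of the conditional law flow from Lemma~\ref{proj}, which you rightly single out as the only genuinely non-classical point. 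On the $L^p$ moment bound you are in fact more explicit than the paper, which omits that argument entirely; your chain $M_2(\mu^X_s)^{p/2}=\mathbb{E}[|X_s|^2\vert\mathcal{G}_s]^{p/2}\le\mathbb{E}[|X_s|^p\vert\mathcal{G}_s]$ via conditional Jensen, followed by BDG and Gronwall, is the right way to fill that gap (one should, as usual, localize with stopping times before invoking Gronwall so that the quantity being bounded is finite a priori). The only cosmetic blemish is the clash between the dimension index and the iteration index in the notation $\mathbb{S}^{2,n}$.
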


\begin{proof}
The proof is an application of the contraction mapping theorem. For each $c>0$, we consider the space of all $\mathcal{F}_t$-progressively measurable processes satisfying
$$
\|X\|_c^2:=\mathbb{E}\left[\int^T_0 e^{-ct}|X_t|^2 dt\right]<\infty.
$$
This space will be denoted by $\mathbb{H}^2_c$. It can be easily proven to be a Banach space. Furthermore, for all $X \in \mathbb{H}^2_c$, we have 
$$
\mathcal{L}(X_t \vert \mathcal{G}_t) \in \mathcal{P}_2(\mathbb{R}^n), \quad \text{a.s., a.e..}
$$
and we can define
$$
U_t=x_0 +\int^t_0 b(s,X_s,\mathcal{L}(X_s\vert \mathcal{G}_s))ds+\int^t_0 \sigma(s,X_s,\mathcal{L}(X_s\vert \mathcal{G}_s))dW_s.
$$
It is easy to show that $U \in \mathbb{H}^2_c$. On the other hand, if we fix $X,X' \in \mathbb{H}^2_c$ and let $U$ and $U'$ be the processes defined via the above equality from $X$ and $X'$ respectively, we have
$$
\begin{aligned}
&\mathbb{E}\left[\left|\int^t_0  b(s,X'_s,\mathcal{L}(X'_s\vert \mathcal{G}_s))-b(s,X_s,\mathcal{L}(X_s\vert \mathcal{G}_s)) ds\right|^2\right]\\
 &\phantom{????}\leq 2 T K^2 \mathbb{E}\left[\int^t_0 \vert X'_s-X_s \vert^2+W^2_2(\mathcal{L}(X'_s\vert \mathcal{G}_s),\mathcal{L}(X_s\vert \mathcal{G}_s))ds\right]\\
&\phantom{????}\leq 2 T K^2 \mathbb{E}\left[\int^t_0 \vert X'_s-X_s\vert^2 ds\right],
\end{aligned}
$$
and we have the same type of estimate for the stochastic integral term by replacing the Cauchy-Schwarz inequality by the Ito isometry. This yields
$$\begin{aligned}
\Vert U'-U \Vert^2_c=&\mathbb{E}\left[\int^T_0 e^{-ct} \vert U'_t-U_t \vert^2dt\right] \\
\leq & 2 (T+1) K^2\mathbb{E}\left[\int^T_0 e^{-ct} \left(\int^t_0 \vert X'_s-X_s \vert^2 ds\right)dt\right]\\
\leq &\frac{2 (T+1) K^2}{c}\Vert X'-X \Vert^2_c,
\end{aligned}$$
and this proves that the map $X\to U$ is a strict contraction in the Banach space $\mathbb{H}^2_c$ if we choose $c$ sufficiently large. The fact that the solution possesses finite moments can be obtained by using standard estimates and Lemma ~\ref{lwasserstein}. We omit the proof here.
\end{proof}

In the above discussion, $\mathcal{G}_t$ is a rather general sub-filtration of the Brownian filtration $\mathcal{F}^{W}_t$. From now on, we shall restrict ourselves to sub-filtrations $\mathcal{G}_t$ equal to the Brownian filtration generated by the first $r$ components of $W$ for some $r < m$. We rewrite (\ref{fo:cMkV}) as
\begin{equation}\label{fo:cMkV2}
dX_t=b(t,X_t,\mathcal{L}(X_t\vert \mathcal{G}^W_t))dt+\sigma (t,X_t,\mathcal{L}(X_t\vert \mathcal{G}^W_t))dW_t,
\end{equation}
and we expect that the solution of the SDE (\ref{fo:cMkV2}) is given by a deterministic functional of the Brownian paths. In order to prove this fact in a rigorous way, we need the following notion.

\begin{definition}
By a set-up we mean a 4-tuple $(\Omega, \mathcal{F}, \mathbb{P}, W)$ where $(\Omega, \mathcal{F}, \mathbb{P})$ is a probability space with a $d$-dimensional Wiener process $W$. We use $\mathcal{F}^W_t$ to denote the natural filtration generated by $W$ and $\mathcal{G}^W_t$ to denote the natural filtration generated by the first $r$ components of $W$. By the canonical set-up we mean $(\Omega^c, \mathcal{F}^c, \mathbb{W}, B)$, where $\Omega^c=C([0,T];\mathbb{R}^m)$, $\mathcal{F}^c$ is the Borel $\sigma$-field associated with the uniform topology, $\mathbb{W}$ is the Wiener measure and $B_t$ is the coordinate (marginal) projection.
\end{definition}

Proposition \ref{pMkV} basically states that the SDE (\ref{fo:cMkV2}) is uniquely solvable on any set-up, and in particular it is uniquely solvable on the canonical set-up. The solution in the canonical set-up, denoted by $X^c$, gives us a measurable functional from $C([0,T];\mathbb{R}^d)$ to $C([0,T];\mathbb{R}^n)$. Because of the important role played by this functional, in the following we use $\Phi$ (instead of $X^c$) to denote it.

\begin{lemma}\label{ltransform}
Let $\psi:C([0,T];\mathbb{R}^m) \rightarrow \mathbb{R}^n$ be $\mathcal{F}^B_t$-measurable, then we have
$$\mathcal{L}(\psi \vert \mathcal{G}^B_t)(W_\cdot)=\mathcal{L}(\psi(W_\cdot)\vert \mathcal{G}^W_t).$$
\end{lemma}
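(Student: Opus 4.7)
The plan is to view $W_\cdot:(\Omega,\mathbb{P})\to(\Omega^c,\mathbb{W})$ as a measure-preserving map that pulls the sub-filtration $\mathcal{G}^B_t$ back to $\mathcal{G}^W_t$, and then to verify the two defining properties of a regular conditional distribution for the candidate $\mathcal{L}(\psi\vert\mathcal{G}^B_t)(W_\cdot)$. Denote by $\nu_t(\omega):=\mathcal{L}(\psi\vert\mathcal{G}^B_t)(\omega)$ the fixed regular version chosen on the canonical space $(\Omega^c,\mathcal{F}^c,\mathbb{W})$ as in Section \ref{sub:conditional}. The claim is that $\omega\mapsto\nu_t(W_\cdot(\omega))$ is a regular conditional distribution of $\psi(W_\cdot)$ given $\mathcal{G}^W_t$ on $(\Omega,\mathcal{F},\mathbb{P})$.

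The first step is the measurability check. Since $\nu_t$ was produced as a $\mathcal{G}^B_t$-measurable $\mathcal{P}(\mathbb{R}^n)$-valued map on $\Omega^c$, and $\mathcal{G}^B_t$ is generated by the first $r$ coordinates of $B$ up to time $t$, a monotone class/factorization argument (the Doob--Dynkin lemma applied to Polish spaces) gives a measurable map $\Phi_t:C([0,t];\mathbb{R}^r)\to\mathcal{P}(\mathbb{R}^n)$ such that $\nu_t=\Phi_t(B^{(r)}_{\cdot\wedge t})$, where $B^{(r)}$ denotes the first $r$ components. Composing with $W_\cdot$ yields $\nu_t(W_\cdot)=\Phi_t(W^{(r)}_{\cdot\wedge t})$, which is $\mathcal{G}^W_t$-measurable by definition.

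The second step is the integral identity. Take any bounded measurable $f:\mathbb{R}^n\to\mathbb{R}$ and any bounded $\mathcal{G}^W_t$-measurable $Z$. By the same factorization, $Z=g(W_\cdot)$ for some bounded $\mathcal{G}^B_t$-measurable $g$ on $\Omega^c$. Because the law of $W_\cdot$ under $\mathbb{P}$ equals the Wiener measure $\mathbb{W}$, we can push the integral onto $\Omega^c$ and use the defining property of $\nu_t$ there:
$$
\mathbb{E}\bigl[f(\psi(W_\cdot))\,Z\bigr]
=\int_{\Omega^c} f(\psi(\omega))\,g(\omega)\,d\mathbb{W}(\omega)
=\int_{\Omega^c}\Bigl(\int f\,d\nu_t(\omega)\Bigr)g(\omega)\,d\mathbb{W}(\omega)
=\mathbb{E}\bigl[\langle f,\nu_t(W_\cdot)\rangle\,Z\bigr].
$$
Since this holds for every bounded $\mathcal{G}^W_t$-measurable $Z$ and every bounded measurable $f$, $\nu_t(W_\cdot)$ is indeed a regular conditional distribution of $\psi(W_\cdot)$ given $\mathcal{G}^W_t$, which is precisely the assertion of the lemma.

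The only mildly delicate point I anticipate is making the factorization $\nu_t=\Phi_t(B^{(r)}_{\cdot\wedge t})$ rigorous when the target space is $\mathcal{P}(\mathbb{R}^n)$; this is handled by embedding $\mathcal{P}(\mathbb{R}^n)$ into a Polish space (equipped with the weak topology it is already Polish) and invoking the standard $\sigma$-field generation / Doob--Dynkin argument. Everything else is routine bookkeeping using Fubini and the pushforward $W_\cdot{}_*\mathbb{P}=\mathbb{W}$.
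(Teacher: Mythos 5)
Your proof is correct and follows essentially the same route as the paper's: reduce the claim to the identity $\mathbb{E}[f(\psi(W_\cdot))\vert\mathcal{G}^W_t]=\mathbb{E}[f(\psi)\vert\mathcal{G}^B_t](W_\cdot)$ for bounded measurable $f$ and verify it from the definition of conditional expectation via the pushforward $W_\cdot{}_*\mathbb{P}=\mathbb{W}$. The paper merely asserts this verification; you supply the Doob--Dynkin factorization and the change-of-variables computation explicitly, which is a welcome elaboration rather than a different argument.
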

\begin{proof}
By the definition of conditional distributions, it suffices to prove that for all bounded measurable functions $f:\mathbb{R}^n \rightarrow \mathbb{R}^+$ we have
$$\mathbb{E}\left[f(\psi(W_\cdot))\vert \mathcal{G}^W_t\right]=\mathbb{E}\left[f(\psi)\vert \mathcal{G}^B_t\right](W_\cdot),$$
and by using the definition of conditional expectations the above equality can be easily proved.
\end{proof}

With the help of Lemma \ref{ltransform}, we can state and prove

\begin{proposition}\label{pfunctional}
On any set-up $(\Omega, \mathcal{F}, \mathbb{P}, W)$, the solution of (\ref{fo:cMkV2}) is given by
$$X_\cdot=\Phi(W_\cdot).$$
\end{proposition}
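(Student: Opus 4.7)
The plan is to prove the identity $X_\cdot=\Phi(W_\cdot)$ by first showing that $\Phi(W_\cdot)$ satisfies the SDE (\ref{fo:cMkV2}) on the given set-up and then invoking the pathwise uniqueness from Proposition \ref{pMkV}. The heart of the argument is a transfer principle: the measurable functional $\Phi$ constructed on the canonical set-up can be applied to any Brownian motion on any set-up without altering the validity of the SDE.

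First I would fix the canonical identity. By construction of $\Phi$ from Proposition \ref{pMkV} applied to $(\Omega^c,\mathcal{F}^c,\mathbb{W},B)$, we have $\mathbb{W}$-almost surely
$$
\Phi(B)_t = x_0 + \int_0^t b\bigl(s,\Phi(B)_s,\mathcal{L}(\Phi(B)_s\vert\mathcal{G}^B_s)\bigr)\,ds + \int_0^t \sigma\bigl(s,\Phi(B)_s,\mathcal{L}(\Phi(B)_s\vert\mathcal{G}^B_s)\bigr)\,dB_s .
$$
Writing $\mu_s(\cdot)$ for the measurable functional on path space delivered by Lemma \ref{proj} (so that $\mu_s(B)=\mathcal{L}(\Phi(B)_s\vert\mathcal{G}^B_s)$), the Picard iteration that produces $\Phi$ in the proof of Proposition \ref{pMkV} expresses $\Phi(B)$ as an $L^2$-limit of purely pathwise functionals of $B$ and of stochastic integrals approximated by adapted Riemann sums
$$
\sum_k \sigma\bigl(t_k,\Phi^{(n)}(B)_{t_k},\mu^{(n)}_{t_k}(B)\bigr)\bigl(B_{t_{k+1}}-B_{t_k}\bigr),
$$
which are themselves measurable functionals of the path of $B$.

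Second, I would transfer this construction to $(\Omega,\mathcal{F},\mathbb{P},W)$. Since $W$ and $B$ have the same law on $C([0,T];\mathbb{R}^m)$, the pathwise Riemann sums evaluated along $W$ satisfy the same $L^2$-Cauchy estimates (the Ito isometry depending only on the joint law of the increments), and one checks that their limit coincides with the Ito integral against $W$ of the adapted integrand $\sigma(s,\Phi(W)_s,\mu_s(W))$. Together with the trivial transfer of the Lebesgue integral, this yields
$$
\Phi(W)_t = x_0 + \int_0^t b\bigl(s,\Phi(W)_s,\mu_s(W)\bigr)\,ds + \int_0^t \sigma\bigl(s,\Phi(W)_s,\mu_s(W)\bigr)\,dW_s,\quad \mathbb{P}\text{-a.s.}
$$
Since $\Phi(B)_s$ is $\mathcal{F}^B_s$-measurable, Lemma \ref{ltransform} identifies $\mu_s(W) = \mathcal{L}(\Phi(B)_s\vert\mathcal{G}^B_s)(W_\cdot)=\mathcal{L}(\Phi(W)_s\vert\mathcal{G}^W_s)$, so $\Phi(W)$ is a strong solution of (\ref{fo:cMkV2}) on the given set-up. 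The uniqueness part of Proposition \ref{pMkV} then forces $X=\Phi(W)$ a.s.

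The main obstacle is the justification of the stochastic-integral transfer: Ito integrals are defined only up to $\mathbb{P}$-null sets that depend on the underlying probability measure, so one cannot literally substitute $W$ for $B$ in an equality that holds $\mathbb{W}$-a.s. The remedy is to extract the pathwise content of the Picard construction (Riemann-sum approximations which are Borel functions of the driving path) and then argue that the $L^2$-Cauchy property used to pass to the limit depends only on the joint law of the driver, which is identical for $B$ and $W$.
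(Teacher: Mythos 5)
Your proof is correct and takes essentially the same route as the paper: it verifies that $\Phi(W_\cdot)$ solves (\ref{fo:cMkV2}) on the given set-up by transferring the canonical identity and using Lemma \ref{ltransform} to identify the conditional laws, then concludes by the uniqueness part of Proposition \ref{pMkV}. The only difference is that you explicitly justify the transfer of the stochastic integral between set-ups via Riemann-sum approximations of the Picard iterates, a measure-theoretic step the paper's verification leaves implicit.
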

\begin{proof}
We are going to check directly that $\Phi(W_\cdot)$ is a solution of (\ref{fo:cMkV2}). By the definition of $\Phi$ as the solution of (\ref{fo:cMkV2}) on the canonical set-up, we have
$$
\Phi(\mathtt{w})=x_0+\int^t_0 b(s,\Phi(\mathtt{w})_s, \mathcal{L}(\Phi(\cdot)_s \vert \mathcal{G}^B_s)(\mathtt{w}))ds+\int^t_0 \sigma(s,\Phi(\mathtt{w})_s, \mathcal{L}(\Phi(\cdot)_s \vert \mathcal{G}^B_s)(\mathtt{w}))dB_s, \mathbb{W}-\text{a.s.,}
$$
where $\mathtt{w}$ stands for a generic element in the canonical space $C([0,T];\mathbb{R}^m)$. By using Lemma \ref{ltransform} we thus have
$$
\Phi(W_\cdot)=x_0+\int^t_0 b(s,\Phi(W_\cdot)_s, \mathcal{L}(\Phi(W_\cdot)_s \vert \mathcal{G}^B_s))ds+\int^t_0 \sigma(s,\Phi(W_\cdot)_s, \mathcal{L}(\Phi(W_\cdot)_s \vert \mathcal{G}^B_s))dW_s, \mathbb{P}-\text{a.s.,}
$$
which proves the desired result.
\end{proof}

\subsection{The Nonlinear Processes}
The limiting nonlinear processes associated with the particle system (\ref{fo:finitepoc}) is defined as the solution of
\begin{equation}
\label{fo:Nonlinear}
\begin{cases}
dX^0_t=b_0(t,X^0_t, \mathcal{L}(X^1_t\vert \mathcal{F}^0_t))dt+\sigma_0(t,X^0_t, \mathcal{L}(X^1_t\vert \mathcal{F}^0_t)) dW^0_t,\\
dX^i_t=b(t,X^i_t, \mathcal{L}(X^i_t\vert \mathcal{F}^0_t),X^0_t)dt+\sigma(t,X^i_t, \mathcal{L}(X^i_t\vert \mathcal{F}^0_t),X^0_t) dW^i_t, \quad i \geq 1,\\
X^0_0=x^0_0,\qquad
X^i_0=x_0, \quad i \geq 1.
\end{cases}
\end{equation}
Under the assumptions (A1.1)-(A1.3), the unique solvability of this system is ensured by Proposition \ref{pMkV}. Due to the strong symmetry among the processes $(X^i)_{i \geq 1}$, we first prove the following proposition.

\begin{proposition}
For all $i \geq 1$, the solution of (\ref{fo:Nonlinear}) solves the conditional McKean-Vlasov SDE
$$\begin{cases}
dX^0_t=b_0(t,X^0_t,\mathcal{L}(X^i_t\vert \mathcal{F}^0_t))dt+\sigma_0(t,X^0_t,\mathcal{L}(X^i_t\vert \mathcal{F}^0_t)) dW^0_t,\\
dX^i_t=b(t,X_t,\mathcal{L}(X^i_t\vert\mathcal{F}^0_t),X^0_t)dt+\sigma(t,X^i_t, \mathcal{L}(X^i_t\vert \mathcal{F}^0_t),X^0_t) dW^i_t,
\end{cases}$$
and for all fixed $t \in [0,T]$, the random variables $(X^i_t)_{i \geq 1}$ are $\mathcal{F}^0_t$-conditionally i.i.d..
\end{proposition}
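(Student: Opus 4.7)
The strategy is to exhibit a single deterministic measurable functional $\Phi$ such that $X^i = \Phi(W^0, W^i)$ for every $i \geq 1$. Once that functional representation is established, both claims drop out immediately: the identity of conditional laws follows because all $W^i$ share a common law and each is independent of $W^0$, while conditional independence follows because conditioning on $\mathcal{F}^0_t$ freezes the $W^0$ argument and the $W^i$ remain independent.

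To produce $\Phi$, I would first isolate the two-particle conditional McKean--Vlasov system for the pair $(X^0, X^1)$, viewed as an SDE in $\mathbb{R}^{d_0+d}$ driven by the concatenated Brownian motion $(W^0, W^1)$, with conditioning filtration $\mathcal{G}_t = \sigma(W^0_s : s \leq t)$. This filtration is generated precisely by the first $m_0$ coordinates of the driving Brownian motion, placing the system in the framework of Section \ref{se:conditional_chaos}. Proposition \ref{pMkV} gives unique strong solvability, and Proposition \ref{pfunctional} produces a measurable map $(X^0, X^1) = \Phi(W^0, W^1)$, whose second component I denote $\Phi^1$. Set $\mu_t := \mathcal{L}(X^1_t \vert \mathcal{F}^0_t)$.

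For each $i \geq 2$, I would define $\tilde X^i_\cdot := \Phi^1(W^0_\cdot, W^i_\cdot)$ and check that it satisfies the same SDE as $X^i$ in \eqref{fo:Nonlinear}. The key computation uses Lemma \ref{ltransform}: since $W^i \stackrel{d}{=} W^1$ and both are independent of $W^0$, one obtains $\mathcal{L}(\tilde X^i_t \vert \mathcal{F}^0_t) = \mathcal{L}(X^1_t \vert \mathcal{F}^0_t) = \mu_t$ almost surely. Hence $\tilde X^i$ solves the ordinary SDE with $\mathcal{F}^0$-adapted Lipschitz coefficients $b(t, \cdot, \mu_t, X^0_t)$ and $\sigma(t, \cdot, \mu_t, X^0_t)$ driven by $W^i$, which is exactly the defining SDE of $X^i$. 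Strong pathwise uniqueness then yields $\tilde X^i = X^i$, giving the desired representation $X^i = \Phi^1(W^0, W^i)$.

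From this representation, both claims are immediate. For the first, $\mathcal{L}(X^i_t \vert \mathcal{F}^0_t) = \mathcal{L}(\Phi^1(W^0, W^i)_t \vert \mathcal{F}^0_t) = \mathcal{L}(X^1_t \vert \mathcal{F}^0_t)$, so $X^1$ may be replaced by any $X^i$ in the drift and volatility of $X^0$. For the second, conditioning on $\mathcal{F}^0_t$ freezes the $\Phi^1$ functional via its $W^0$ argument while leaving the i.i.d.\ inputs $(W^i)_{i \geq 1}$ unchanged, so the images $X^i_t$ are $\mathcal{F}^0_t$-conditionally i.i.d. The principal subtlety I anticipate is the careful use of Lemma \ref{ltransform} to commute the functional representation with the conditional law, and verifying that the conditioning filtration for the two-particle subsystem fits precisely the ``first $r$ components'' hypothesis of Proposition \ref{pfunctional}; the rest is standard SDE bookkeeping.
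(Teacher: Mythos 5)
Your proof is correct and follows essentially the same route as the paper, whose entire proof is the one-line remark that the claim is an immediate consequence of Proposition \ref{pfunctional}; you simply make explicit the functional representation $X^i=\Phi^1(W^0,W^i)$, the use of Lemma \ref{ltransform} to identify the conditional laws, and the uniqueness argument from Proposition \ref{pMkV} that the paper leaves implicit.
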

\begin{proof}
This is an immediate consequence of Proposition \ref{pfunctional}.
\end{proof}
Now that the nonlinear processes are well-defined, in the next subsection we prove that these processes give  the limiting behaviour of (\ref{fo:finitepoc}) when $N$ tends to infinity.

\subsection{Conditional Propagation of Chaos}
We extend the result of the unconditional theory to the conditional case involving the influence of a major player. 
As in the classical case, the propagation appears in a strong path wise sense.

\begin{theorem}\label{tpropagation}
There exists a constant $C$ such that
$$\max_{0 \leq i \leq N}\mathbb{E}[\sup _{0 \leq t \leq T}|X^{i,N}_t-X^i_t|^2] \leq CN^{-2/(d+4)},$$
where $C$ only depends on $T$,  the Lipschitz constants of $b_0$ and $b$ and
$$\eta=\mathbb{E}\left[\int^T_0 \vert X^1_t\vert^{d+5}dt\right]$$
\end{theorem}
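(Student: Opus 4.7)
My plan is the standard synchronous coupling argument of Sznitman, adapted to the conditional setting. Fix $t\in[0,T]$, write $\mu_s:=\mathcal{L}(X^1_s\vert\mathcal{F}^0_s)$, and subtract the SDEs (\ref{fo:finitepoc}) and (\ref{fo:Nonlinear}) driven by the same Wiener processes. For $i\ge 1$,
$$
X^{i,N}_t-X^i_t=\int_0^t\!\bigl[b(s,X^{i,N}_s,\mu^N_s,X^{0,N}_s)-b(s,X^i_s,\mu_s,X^0_s)\bigr]ds+\int_0^t\!\bigl[\sigma(\cdot)-\sigma(\cdot)\bigr]dW^i_s,
$$
with an analogous identity for $X^{0,N}_t-X^0_t$. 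Applying Cauchy--Schwarz to the drift, Doob/BDG to the martingale part, and (A1.2), I obtain, writing $A_0(t)=\EE\sup_{s\le t}|X^{0,N}_s-X^0_s|^2$ and $A(t)=\EE\sup_{s\le t}|X^{1,N}_s-X^1_s|^2$,
$$
A_0(t)+A(t)\le C\int_0^t\!\bigl[A_0(s)+A(s)+\EE\,W_2^2(\mu^N_s,\mu_s)\bigr]\,ds,
$$
where exchangeability of the minor particles has been used to see that $\EE|X^{i,N}_s-X^i_s|^2$ is independent of $i\ge 1$ and to control $\frac1N\sum_i\EE|X^{i,N}_s-X^i_s|^2$ by $A(s)$.

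The key estimate is the split of the Wasserstein term by triangle inequality: if $\bo\mu^N_s:=\frac1N\sum_{i=1}^N\delta_{X^i_s}$, then
$$
W_2^2(\mu^N_s,\mu_s)\le 2W_2^2(\mu^N_s,\bo\mu^N_s)+2W_2^2(\bo\mu^N_s,\mu_s).
$$
The first term is bounded, using the coupling $(X^{i,N}_s,X^i_s)$, by $\frac1N\sum_i|X^{i,N}_s-X^i_s|^2$, whose expectation equals $A(s)$ by exchangeability. For the second term I invoke Lemma \ref{le:RR}: conditionally on $\mathcal{F}^0_s$ the random variables $(X^i_s)_{i\ge 1}$ are i.i.d.\ with law $\mu_s$, hence the full sequence is exchangeable with directing measure $\mu_s$ whose marginal $\beta_s$ satisfies $\int|u|^{d+5}\beta_s(du)=\EE|X^1_s|^{d+5}$. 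Therefore
$$
\EE\,W_2^2(\bo\mu^N_s,\mu_s)\le C\,\EE|X^1_s|^{d+5}\,N^{-2/(d+4)},
$$
and integrating in $s$ produces the factor $\eta\,N^{-2/(d+4)}$.

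Plugging these bounds back and applying Gronwall's inequality yields $A_0(T)+A(T)\le C'\eta\,N^{-2/(d+4)}$, and the theorem for $i=0,1$ follows; the case $i\ge 2$ is identical by the symmetry among the minor particles. The main technical point to be careful with is the application of Lemma \ref{le:RR}: the statement is unconditional, but the sequence $(X^i_s)_{i\ge 1}$ generated by (\ref{fo:Nonlinear}) is only \emph{conditionally} i.i.d.\ given $\mathcal{F}^0_s$. One must verify that conditional independence implies (unconditional) exchangeability with the conditional law playing the role of the directing measure, which is precisely de Finetti's construction and is guaranteed here because the driving noises $(W^i)_{i\ge 1}$ are i.i.d.\ and independent of $W^0$, and the coefficients $b,\sigma$ act identically on each minor coordinate. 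A secondary but routine obstacle is justifying finiteness of $\eta$, which follows from the $L^p$ moment bound in Proposition \ref{pMkV} applied to the enlarged McKean--Vlasov system (\ref{fo:Nonlinear}) with $p=d+5$.
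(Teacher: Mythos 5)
Your proposal is correct and follows essentially the same route as the paper's proof: synchronous coupling of \eqref{fo:finitepoc} and \eqref{fo:Nonlinear}, the triangle-inequality split of $W_2^2(\mu^N_s,\mu_s)$ through the empirical measure of the limiting particles, exchangeability to reduce to the $i=1$ term, Lemma \ref{le:RR} for the $N^{-2/(d+4)}$ rate, and Gronwall. Your explicit treatment of the martingale part via BDG and your remarks on why the conditionally i.i.d.\ sequence fits the hypotheses of Lemma \ref{le:RR} are welcome clarifications of points the paper passes over quickly, but they do not change the argument.
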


\begin{proof}
We first note that, by the SDEs satisfied by $X^0$ and $X^{0,N}$ and the Lipschitz conditions on the coefficients,
$$\begin{aligned}
&|X^{0,N}_t-X^0_t|^2\\
= &(\int^t_0 b_0(s,X^{0,N}_s, \frac{1}{N}\sum^N_{j=1}\delta _{X^{j,N}_s})-b(s,X^0_s,\mu_s)ds)^2 \\
\leq &K(\int^t_0 |X^{0,N}_s-X^0_s|^2ds+ \int^t_0 W^2_2(\frac{1}{N}\sum^N_{j=1}\delta _{X^{j,N}_s},\frac{1}{N}\sum^N_{j=1}\delta_{X^j_s})ds\\
&+ \int^t_0 W^2_2(\frac{1}{N}\sum^N_{j=1}\delta_{X^j_s},\mu_s)ds)\\
\leq &K(\int^t_0 |X^{0,N}_s-X^0_s|^2ds+ \int^t_0 \frac{1}{N}\sum^N_{j=1}(X^{j,N}_s-X^j_s)^2ds\\
&+ \int^t_0 W^2_2(\frac{1}{N}\sum^N_{j=1}\delta_{X^j_s},\mu_s)ds).
\end{aligned}$$

We take the supremum and the expectation on both sides, by the exchangeability we get
$$\begin{aligned}
&\mathbb{E}[\sup_{0 \leq s \leq t}|X^{0,N}_s-X^0_s|^2]\\
\leq &K(\int^t_0 \mathbb{E}[\sup_{0 \leq u \leq s}|X^{0,N}_u-X^0_  u|^2]ds+\int^t_0\mathbb{E}[(X^{1,N}_s-X^1_s)^2]ds\\
&+\int^t_0 \mathbb{E}[W^2_2(\frac{1}{N}\sum^N_{j=1}\delta _{X^j_s},\mu_s)]ds)\\
\leq &K(\int^t_0 \mathbb{E}[\sup_{0 \leq u \leq s}|X^{0,N}_u-X^0_u|^2]ds+\int^t_0\mathbb{E}[\sup_{0 \leq u \leq s}|X^{1,N}_u-X^1_u|^2]ds\\
&+\int^t_0 \mathbb{E}[W^2_2(\frac{1}{N}\sum^N_{j=1}\delta _{X^j_s},\mu_s)]ds),
\end{aligned}
$$
By following the above computation we can readily obtain the same type of estimate for $X^{1,N}-X^1$:
$$\begin{aligned}
&\mathbb{E}[\sup_{0 \leq s \leq t}|X^{1,N}_s-X^1_s|^2]
\leq K'(\int^t_0 \mathbb{E}[\sup_{0 \leq u \leq s}|X^{0,N}_u-X^0_u|^2]ds+\int^t_0\mathbb{E}[\sup_{0 \leq u \leq s}|X^{1,N}_u-X^1_u|^2]ds\\
&\phantom{?????????????????????????????}+\int^t_0 \mathbb{E}[W^2_2(\frac{1}{N}\sum^N_{j=1}\delta _{X^j_s},\mu_s)]ds),
\end{aligned}$$
by summing up the above two inequality and using the Gronwall's inequality we get
$$\begin{aligned}
&\mathbb{E}[\sup_{0 \leq t \leq T}|X^{0,N}_t-X^0_t|^2]+\mathbb{E}[\sup_{0 \leq t \leq T}|X^{1,N}_t-X^1_t|^2]\\
\leq &K\int^T_0 \mathbb{E}[W^2_2(\frac{1}{N}\sum^N_{j=1}\delta _{X^j_t},\mu_t)]ds \leq K\mathbb{E}\left[\int^T_0 \vert X^1_t\vert^{d+5}\right] N^{-2/(d+4)},
\end{aligned}$$
where the second inequality comes from a direct application of Lemma ~\ref{le:RR}, with the help of Lemma ~\ref{lwasserstein}, and this proves the desired result.
\end{proof}

\section{Appendix: A Maximum Principle for Conditional McKean-Vlasov Control Problems}
\label{se:Pontryagin}
In this last section, we establish a version of the sufficient part of the stochastic Pontryagin maximum principle for a type of conditional McKean-Vlasov control problem. In some sense these results are extensions of the results in \cite{CarmonaDelarue_ecp}, and we will refer the reader to \cite{CarmonaDelarue_ecp} for details and proofs. The setup is the following: $(\Omega, \mathcal{F},\mathbb{P})$ is a probability space, $(\mathcal{F}_t)$ is a filtration satisfying the usual conditions defined on $\Omega$. $(\mathcal{G}_t)$ and $(\mathcal{H}_t)$ are two sub-filtrations of $(\mathcal{F}_t)$ also satisfying the usual conditions, and $(W_t)$ is a $n$-dimensional $\mathcal{F}_t$-Wiener process. We assume that the probability space $\Omega$ is standard. \\
\indent The controlled dynamics are given by
\begin{equation}
\label{fo:SDEB}
dX_t=b(t,X_t,\mathcal{L}(X_t \vert \mathcal{G}_t),u_t)dt+\sigma(t,X_t,\mathcal{L}(X_t \vert \mathcal{G}_t),u_t)dW_t,
\end{equation}
with initial condition $X_0 = x_0$, and the objective function to minimize is given by
$$
J(u)=\mathbb{E}\left[\int^T_0 f(t,X_t,\mathcal{L}(X_t \vert \mathcal{G}_t),u_t)+g(X_T,\mathcal{L}(X_T \vert \mathcal{G}_T))\right],
$$
where $X$ is $d$-dimensional and $u$ takes values in $U \subset \mathbb{R}^k$ which is convex. The set of admissible controls is the space $\HH^{2,k}(\cH_t,U)$ defined in \eqref{fo:H2d}. We shall use the assumptions:

\noindent (\textbf{A2.1}) For all $x\in \mathbb{R}^d$, $\mu \in \mathcal{P}_2(\mathbb{R}^d)$ and $u \in U$, $[0,T] \ni t \hookrightarrow (b,\sigma) (t,x,\mu,u)$
is square-integrable.\\
\noindent (\textbf{A2.2}) For all $t \in [0,T]$, $x,x' \in \mathbb{R}^d$, $\mu,\mu' \in \mathcal{P}_2(\mathbb{R}^d)$ and $u \in U$, we have
$$\vert b(t,x',\mu',u)-b(t,x,\mu,u)\vert+\vert \sigma(t,x',\mu',u)-\sigma(t,x,\mu,u)\vert \leq c(\vert x'-x\vert+W_2(\mu',\mu)).$$
\noindent (\textbf{A2.3}) The coefficient functions $b$, $\sigma$, $f$ and $g$ are differentiable with respect to $x$ and $\mu$.\\

We note that under (A2.1-2), for all admissible controls the controlled SDE (\ref{fo:SDEB}) has a unique solution which is square-integrable. (A2.3) will be used in defining the adjoint processes.

\subsection{Hamiltonian and Adjoint Processes}
The Hamiltonian of the problem is defined as
$$
H(t,x,\mu,y,z,u)=\langle y,b(t,x,\mu,u)\rangle+\langle z, \sigma(t,x,\mu,u)\rangle+f(t,x,\mu,u).
$$
Given an admissible control $u\in\HH^{2,k}(\cH_t,U)$, the associated adjoint equation is defined as the following BSDE:
\begin{equation}
\begin{cases}
\begin{aligned}
dY_t=&-\partial_x H(t,X_t,\mathcal{L}(X_t\vert \mathcal{G}_t),Y_t,Z_t,u_t)dt+Z_t dW_t\\
     &-\mathbb{E}^{\mathcal{G}_t}[\partial_\mu H(t,\tilde{X}_t,\mathcal{L}(\tilde{X}_t\vert\mathcal{G}_t),\tilde{Y}_t,\tilde{Z}_t,\tilde{u}_t)(X_t)],
\end{aligned}\\
Y_T=\partial_x g(X_T,\mathcal{L}(X_T \vert \mathcal{G}_T))+\mathbb{E}^{\mathcal{G}_T}[\partial_\mu g(\tilde{X}_T,\mathcal{L}(\tilde{X}_T\vert \mathcal{G}_T))(X_T)],
\end{cases}
\end{equation}
where $X=X^u$ denotes the state controlled by $u$, and whose dynamics are given by \eqref{fo:SDEB}. We refer the reader to \cite{CarmonaDelarue_ecp} for the definition of differentiability with respect to the measure argument. This BSDE is of the McKean-Vlasov type because of the presence of the conditional distributions of various $X_t$ in the coefficients and the terminal condition. However, standard fixed point arguments can be used to prove existence and uniqueness of a solution to these equations.

\subsection{Sufficient Pontryagin Maximum Principle}
The following theorem gives us a sufficient condition of optimality.
\begin{theorem}
On the top of assumptions (A2.1-3), we assume that\\
\noindent (1) The function $\mathbb{R}^d \times \mathcal{P}_2(\mathbb{R}^d) \ni (x,\mu) \hookrightarrow g(x,\mu)$ is convex.\\
\noindent (2) The function $\mathbb{R}^d \times \mathcal{P}_2(\mathbb{R}^d) \times U \ni (x,\mu,u) \hookrightarrow H(t,x,\mu,Y_t,Z_t,u)$ is convex $dt \otimes \mathbb{P}$ a.e.\\
\noindent (3) For any admissible control $u'$ we have the following integrability condition
\begin{equation}
\begin{aligned}
\mathbb{E}\left[\left(\int^T_0 \Vert \sigma(t,X'_t,\mathcal{L}(X'_t\vert \mathcal{G}_t),u'_t)\cdot Y_t\Vert^2 dt\right)^{\frac{1}{2}}\right]< \infty,\quad \mathbb{E}\left[\left(\int^T_0 \Vert X'_t\cdot Z_t\Vert^2 dt\right)^{\frac{1}{2}}\right]< \infty.
\end{aligned}
\end{equation}
Moreover, if
\begin{equation}\label{fo:BDG}
\mathbb{E}[H(t,X_t,\mathcal{L}(X_t\vert \mathcal{G}_t),Y_t,Z_t,u_t)\vert \mathcal{H}_t]=\inf_{u \in U} \mathbb{E}[H(t,X_t,\mathcal{L}(X_t\vert \mathcal{G}_t),Y_t,Z_t,u)\vert \mathcal{H}_t],
\end{equation}
then $(u_t)_{0 \leq t \leq T}$ is an optimal control of the conditional McKean-Vlasov control problem.
\end{theorem}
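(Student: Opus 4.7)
The plan is to adapt the standard verification argument for the sufficient Pontryagin principle to the conditional McKean--Vlasov setting. Two ingredients need special care: the convexity of $g$ and $H$ in the measure variable must be invoked pathwise in $\omega$, and the $\partial_\mu$-derivative terms that this generates have to be manipulated via the Fubini-type symmetry $\PP\otimes\Pi^{\cG_t}=\Pi^{\cG_t}\otimes\PP$ recalled in Subsection \ref{sub:conditional}.

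Fix any admissible $u'\in\HH^{2,k}(\cH_t,U)$ with associated state $X'$. Applying the joint convexity of $g$ in $(x,\mu)$ pathwise in $\omega$ to the pair $(X_T,\cL(X_T|\cG_T))$ versus $(X'_T,\cL(X'_T|\cG_T))$, then taking expectation and using the swap $\omega\leftrightarrow\tilde\omega$ in the $\partial_\mu g$-term, one obtains
\[
\EE\bigl[g(X'_T,\cL(X'_T|\cG_T))-g(X_T,\cL(X_T|\cG_T))\bigr]\ge \EE\bigl[Y_T\cdot(X'_T-X_T)\bigr],
\]
by the very definition of the terminal condition of $Y$. Next, I apply Ito's formula to $Y_t\cdot(X'_t-X_t)$ on $[0,T]$ and take expectation; assumption (3) together with the Burkholder--Davis--Gundy inequality kills the martingale parts. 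Substituting the BSDE driver of $Y$, using the identity $Y\cdot b+Z\cdot\sigma=H-f$, and applying the same Fubini swap to the $\partial_\mu H$-term coming from the BSDE, one rewrites $\EE[Y_T\cdot(X'_T-X_T)]$ as a time integral involving the Hamiltonian difference $H(t,X'_t,\ldots,u'_t)-H(t,X_t,\ldots,u_t)$, the running cost difference, and the $\partial_x H$ and symmetrized $\partial_\mu H$ terms evaluated at $(X_t,\cL(X_t|\cG_t),Y_t,Z_t,u_t)$.

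The joint convexity of $H$ in $(x,\mu,u)$, again applied pathwise, yields a lower bound for the Hamiltonian difference whose $\partial_x H$ and $\partial_\mu H$ contributions cancel exactly with those produced by the BSDE driver, leaving
\[
J(u')-J(u)\ge \EE\int_0^T \partial_u H(t,X_t,\cL(X_t|\cG_t),Y_t,Z_t,u_t)\cdot(u'_t-u_t)\,dt.
\]
To conclude, I invoke the Hamiltonian minimization condition \eqref{fo:BDG}: combined with convexity of $H$ in $u$, convexity of $U$, and the $\cH_t$-measurability of both $u_t$ and $u'_t$, a standard first-order variational argument gives $\EE[\partial_u H(t,\ldots,u_t)\,|\,\cH_t]\cdot(u'_t-u_t)\ge 0$ almost surely, so the right-hand side of the previous display is non-negative and $J(u')\ge J(u)$.

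The main obstacle is the careful bookkeeping of the Fubini-type swaps. Since both the convexity inequalities and the BSDE driver produce $\partial_\mu$-terms evaluated on the extended probability space, one must verify that the conditional laws $\cL(X_t|\cG_t)$ (which are $\cG_t$-measurable and therefore invariant under the swap implemented by $\Pi^{\cG_t}$) can be factored out of the integrand so that only the pairs $(X_t,\tilde X_t)$, $(X'_t,\tilde X'_t)$, $(Y_t,\tilde Y_t)$, $(Z_t,\tilde Z_t)$, $(u_t,\tilde u_t)$ are actually exchanged. This verification, routine but tedious, is what genuinely distinguishes the conditional maximum principle from the unconditional version established in \cite{CarmonaDelarue_ecp}.
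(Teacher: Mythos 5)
Your argument is correct and is essentially the same as the paper's: the paper simply invokes Theorem 4.6 of \cite{CarmonaDelarue_ecp} \emph{mutatis mutandis}, noting that the only new ingredient is the interchange of variables justified by the identity $\mathbb{P}\otimes\Pi^{\mathcal{G}_t}=\Pi^{\mathcal{G}_t}\otimes\mathbb{P}$ from Subsection \ref{sub:conditional}, which is precisely the Fubini-type swap you carry out explicitly. Your write-up is a faithful unpacking of that cited verification argument, correctly identifying the swap of $\partial_\mu$-terms and the $\mathcal{H}_t$-conditional first-order condition as the points requiring care.
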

\begin{proof}
The various steps of the proof of theoremTheorem 4.6 in \cite{CarmonaDelarue_ecp} can be followed \emph{mutatis mutandis} once we remarks that, the interchanges of variables made in Theorem 4.6 of \cite{CarmonaDelarue_ecp} when using independent copies can be done in the same way in the present situation. Indeed, the justification for these interchanges was given at the end of Subsection \ref{sub:conditional} earlier in the previous section.
\end{proof}
One final observation is that a sufficient condition for the integrability condition (\ref{fo:BDG}) is that, on the top of (A2.1)-(A2.3), we have $Y \in \mathbb{S}^{2,d}$ and $Z \in \mathbb{H}^{2,d\times n}$, which is an easy consequence of the Burkholder-Davis-Gundy inequality.

\end{document}